\newcommand{\C}{\mathbb{C}}
\newcommand{\LLL}{\mathbb{L}}
\newcommand{\QQ}{\mathbb{Q}}
\newcommand{\NN}{\mathbb{N}}
\newcommand{\PP}{\mathbb{P}}
\newcommand{\OO}{\mathcal O}
\newcommand{\Ss}{\mathcal S}
\newcommand{\TT}{\mathcal T}
\newcommand{\Sy}{\mathfrak S}
\newcommand{\DD}{\mathcal D}
\newcommand{\XX}{\mathcal X}
\newcommand{\VV}{\mathcal V}
\newcommand{\WW}{\mathcal W}
\newcommand{\Cc}{\mathcal C}
\newcommand{\EE}{\mathcal E}
\newcommand{\MM}{\mathcal M}
\newcommand{\gr}{\hbox{Gr}}
\newcommand{\wt}{\widetilde}
\newcommand{\rom}{\romannumeral}
\DeclareMathOperator{\aut}{Aut}
\DeclareMathOperator{\ide}{id}
\newtheorem{theorem}{Theorem}[section]
\newtheorem{claim}[theorem]{Claim}
\newtheorem{lemma}[theorem]{Lemma}
\newtheorem{sublemma}[theorem]{Sublemma}
\newtheorem{corollary}[theorem]{Corollary}
\newtheorem{proposition}[theorem]{Proposition}
\newtheorem{conjecture}[theorem]{Conjecture}
\newtheorem{remark}[theorem]{Remark}
\newtheorem{definition}[theorem]{Definition}
\newtheorem{convention}{Conventions}
\newtheorem{notation}[theorem]{Notation}
\newtheorem{nonumbering}{Theorem}
\newtheorem{nonumberingc}{Corollary}
\newtheorem{nonumberingt}{Acknowledgements}
\begin{document}
\author[Robert Laterveer]
{Robert Laterveer}

\address{Institut de Recherche Math\'ematique Avanc\'ee,
CNRS -- Universit\'e 
de Strasbourg,\
7 Rue Ren\'e Des\-car\-tes, 67084 Strasbourg CEDEX,
FRANCE.}
\email{robert.laterveer@math.unistra.fr}

\title{Algebraic cycles and EPW cubes}

\begin{abstract} Let $X$ be a hyperk\"ahler variety with an anti--symplectic involution $\iota$. According to Beauville's conjectural ``splitting property'', the Chow groups of $X$ should split in a finite number of pieces such that the Chow ring has a bigrading. The Bloch--Beilinson conjectures predict how $\iota$ should act on certain of these pieces of the Chow groups. We verify part of this conjecture for a $19$--dimensional family of hyperk\"ahler sixfolds that are ``double EPW cubes'' (in the sense of Iliev--Kapustka--Kapustka--Ranestad). This has interesting consequences for the Chow ring of the quotient $X/\iota$, which is an ``EPW cube'' (in the sense of Iliev--Kapustka--Kapustka--Ranestad).
\end{abstract}

\keywords{Algebraic cycles, Chow groups, motives, Bloch's conjecture, Bloch--Beilinson filtration, hyperk\"ahler varieties, (double) EPW cubes, $K3$ surfaces, non--symplectic involution, multiplicative Chow--K\"unneth decomposition, splitting property}
\subjclass[2010]{Primary 14C15, 14C25, 14C30.}

\maketitle

\section{Introduction}

For a smooth projective variety $X$ over $\C$, let us write
  \[ A^i(X):=CH^i(X)\otimes{\QQ}\] 
 to denote the Chow groups of $X$ (i.e. codimension $i$ algebraic cycles on $X$ modulo rational equivalence), with $\QQ$--coefficients. As is well--known (and explained for instance in \cite{J2}, \cite{Vo}, \cite{MNP}), the Bloch--Beilinson conjectures form a powerful and coherent heuristic guide, useful in formulating concrete predictions about Chow groups and their relation to cohomology. 
This note is about one instance of such a prediction, concerning non--symplectic involutions on hyperk\"ahler varieties.

Let $X$ be a hyperk\"ahler variety (i.e., a projective irreducible holomorphic symplectic manifold, cf. \cite{Beau0}, \cite{Beau1}), and suppose $X$ has an anti--symplectic involution 
$\iota$. 
The action of $\iota$ on the subring $H^{\ast,0}(X)$ is well--understood: we have
 \[ \begin{split} \iota^\ast=  -\ide \colon\ \ \ &H^{2i,0}(X)\ \to\ H^{2i,0}(X)\ \ \ \hbox{for}\ i\ \hbox{odd}\ ,\\
                        \iota^\ast=  \ide \colon\ \ \ &H^{2i,0}(X)\ \to\ H^{2i,0}(X)\ \ \ \hbox{for}\ i\ \hbox{even}\ .\\
                  \end{split}\]
                  
The action of $\iota$ on the Chow ring $A^\ast(X)$ is more mysterious.                      
To state the conjectural behaviour,
we will now assume the Chow ring of $X$ has a bigraded ring structure $A^\ast_{(\ast)}(X)$, where each $A^i(X)$ splits into pieces
  \[ A^i(X) =\bigoplus_j A^i_{(j)}(X)\ ,\]
and the piece $A^i_{(j)}(X)$ is isomorphic to the graded $\gr^j_F A^i(X)$ for the Bloch--Beilinson filtration that conjecturally exists for all smooth projective varieties.   
 (Such a bigrading $A^\ast_{(\ast)}(-)$ is expected to exist for all hyperk\"ahler varieties; this is Beauville's conjectural ``splitting property'' \cite{Beau3}.) 
 
 Since the pieces $A^i_{(i)}(X)$ and $A^{\dim X}_i(X)$ should only depend on the subring $H^{\ast,0}(X)$, we are led to the following conjecture:
 
 \begin{conjecture}\label{conj} Let $X$ be a hyperk\"ahler variety of dimension $2m$, and let $\iota\in\aut(X)$ be an anti--symplectic involution. Then
   \[  \begin{split}  \iota^\ast= (-1)^i \ide\colon\ \ \ &A^{2i}_{(2i)}(X)\ \to\ A^{2i}(X)\ ,\\
                           \iota^\ast= (-1)^i \ide\colon\ \ \ &A^{2m}_{(2i)}(X)\ \to\ A^{2m}(X)\ .\\
                    \end{split}\]       
  \end{conjecture}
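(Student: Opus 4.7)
The strategy is to reduce the statement to the already--understood action of $\iota^\ast$ on $H^{\ast,0}(X)$, exploiting that the pieces $A^{2i}_{(2i)}(X)$ and $A^{2m}_{(2i)}(X)$ are, under the Bloch--Beilinson philosophy alluded to just above the conjecture, determined by $H^{2i,0}(X)$. The first step is to assume (or, in concrete families, to construct) an $\iota$--equivariant multiplicative Chow--K\"unneth decomposition $\{\pi_0,\dots,\pi_{4m}\}$ of $X$, which is what gives meaning to the bigrading $A^\ast_{(\ast)}(X)$ appearing in the statement. Equivariance, in the form $(\iota\times\iota)^\ast \pi_j = \pi_j$ in $A^{2m}(X\times X)$, ensures that $\iota^\ast$ respects the bigrading, so one may study its action on each piece independently.

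The heart of the argument is then to consider, for each $i$, the correspondence
\[
 \Gamma_i\ :=\ \pi_{2i}\circ \bigl(\Gamma_\iota-(-1)^i\,\Delta_X\bigr)\circ \pi_{2i}\ \in\ A^{2m}(X\times X).
\]
By the action of $\iota^\ast$ on $H^{2i,0}(X)$ recalled above, the correspondence $\Gamma_i$ acts as zero on $H^{2i,0}(X)$. The content of the Bloch--Beilinson conjectures, as applied to the two pieces in question, is that such a correspondence must also act as zero on $A^{2i}_{(2i)}(X)$ and on $A^{2m}_{(2i)}(X)$; this is exactly the informal assertion (used already in the introduction) that those two pieces are controlled by $H^{\ast,0}(X)$. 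Granting it, one obtains $\iota^\ast=(-1)^i\ide$ on both pieces, as desired.

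The main obstacle is precisely the vanishing implication in the previous paragraph: deducing vanishing of a correspondence on specific Chow pieces from its vanishing on $(p,0)$--cohomology is a refined form of Bloch's conjecture, and is open in general. In concrete families (such as the double EPW cubes considered later in the paper) one expects to proceed by constructing the MCK explicitly from the projective geometry of $X$, introducing auxiliary correspondences arising from associated $K3$--type subvarieties and from the covering involution, and producing a decomposition of the diagonal that realizes the cohomological vanishing as an equality of algebraic cycles. A secondary difficulty is arranging for the MCK to be genuinely $\iota$--equivariant, which typically forces the projectors $\pi_j$ to be cut out by $\iota$--stable geometric data; once that is achieved, a spreading argument over the moduli space of the family should propagate the identity from a sufficiently special member to a dense open subfamily, and finally to all members by a specialization argument with $\QQ$--coefficients.
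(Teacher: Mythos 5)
The statement you are addressing is a \emph{conjecture} in the paper, not a theorem: the paper never proves it in full generality and proves only a fragment of it, for one $19$--dimensional family of double EPW cubes (Theorem \ref{main}). Your proposal is honest about this, and correctly identifies that the essential gap --- deducing, from $\iota^\ast=(-1)^i\ide$ on $H^{2i,0}(X)$, the same identity on the Chow pieces --- is a refined Bloch--type implication that is open in general. So there is no genuine proof here, and you know it; what remains to be assessed is whether your proposed route to close the gap in concrete families matches the paper's.

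It does not, and the differences are instructive. You propose constructing an $\iota$--\emph{equivariant} MCK and then studying $\Gamma_i=\pi_{2i}\circ(\Gamma_\iota-(-1)^i\Delta_X)\circ\pi_{2i}$. The paper explicitly does \emph{not} arrange $\iota$--equivariance: the very form of the second identity of Theorem \ref{main}, $(\Pi^X_2)_\ast\iota^\ast=-\ide$ on $A^2_{(2)}(X)$ (with the projector $\Pi^X_2$ appearing on the left), shows that $\iota^\ast$ is not assumed to respect the bigrading, and part of the work is to determine the extent to which it does. The MCK used is the one inherited from $(S_A)^{[3]}$ via Rie\ss's isomorphism, with no compatibility with $\iota$ built in. Instead of equivariance, the paper proves a hard Lefschetz statement for Chow groups (Theorem \ref{hard} / Corollary \ref{hardepw}), produced by a Voisin--style spread over the universal family of Mukai models $S_b\subset G(2,5)$, and uses an $\iota$--invariant line bundle $L$ to transfer $\iota^\ast=-\ide$ from $A^6_{(2)}$ to $A^2_{(2)}$ via $\cdot L^4$. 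The key mechanism that converts the cohomological vanishing into a Chow--theoretic one is Proposition \ref{propvois}: the relevant correspondences on $(S_b)^3$ are compressed (via the auxiliary $\Theta,\Xi$ correspondences of Propositions \ref{prod2}--\ref{prod3}) to correspondences on $S_b\times S_b$, and the triviality of Chow groups of the ambient Grassmannian then forces the spread cycle to act trivially. This is a much more targeted argument than a generic decomposition--of--the--diagonal; it depends essentially on the $K3$'s having a Mukai model. Your plan's final ``specialization with $\QQ$--coefficients'' step is also not in the paper --- the theorem is stated for general $A\in\Delta^1$ only, and the paper does not attempt to extend it to all $A$, let alone to all double EPW cubes. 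So: correct diagnosis of the gap, but the proposed machinery diverges substantially from the one the paper actually deploys, and there is no reason to believe that an $\iota$--equivariant MCK exists in this setting.
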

 
 This conjecture is studied, and proven in some particular cases, in \cite{EPW}, \cite{HKnonsymp}, \cite{BlochHK4}, \cite{ChowEPW}, \cite{ChowHK4nonsympinv}.
 The aim of this note is to provide some more examples where conjecture \ref{conj} is verified, by considering ``double EPW cubes'' in the sense of \cite{IKKR} (cf. also subsection \ref{sscube} below). 
 A double EPW cube is a $6$--dimensional hyperk\"ahler variety $X_A$, constructed as double cover
   \[ X_A\ \to\ D_2^A\ ,\]
   where $D_2^A$ is a slightly singular subvariety of a Grassmannian (the variety $D_2^A$ is called an ``EPW cube''). As shown in \cite{IKKR}, double EPW cubes correspond to a $20$--dimensional irreducible (and unirational) component of the moduli space of hyperk\"ahler sixfolds.
   A double EPW cube $X_A$ comes equipped with the covering involution
   \[ \iota_A\colon\ \ \ X_A\ \to\ X_A \]
   which is anti--symplectic (remark \ref{anti}).
 
  The main result of this note is a partial verification of conjecture \ref{conj} for a $19$--dimensional family of double EPW cubes:

\begin{nonumbering}[=theorem \ref{main}] Let $X$ be a double EPW cube, and assume $X=X_A$ for $A\in \Delta^1$ general (where $\Delta^1\subset LG^1_\nu$ is the divisor of theorem \ref{ikkr}). Let $\iota=\iota_A\in\aut(X)$ be the anti--symplectic involution. Then
  \[   \begin{split}  \iota^\ast&=-\ide\colon\ \ \ A^6_{(2)}(X)\ \to\ A^6(X)\ ,\\
                        (\Pi_2^X)_\ast \iota^\ast&=-\ide\colon\ \ \ A^2_{(2)}(X)\ \to\ A^2_{(2)}(X)\ .
                        \end{split}\]      
    \end{nonumbering}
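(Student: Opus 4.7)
My strategy is to exploit the special geometry of Lagrangians $A\in\Delta^1$: one expects that $\Delta^1$ parametrises precisely those $A$ for which $X_A$ carries extra algebraic cycles coming from a $K3$ surface $S=S_A$ equipped with a non--symplectic involution $\sigma$. The plan is to produce a correspondence
\[ \Gamma \in A^\ast(S\times X_A) \]
for which the induced map $\Gamma_\ast \colon A^\ast(S) \to A^\ast(X_A)$ surjects onto the transcendental piece $A^2_{(2)}(X_A)$, and which intertwines $\iota^\ast$ on $X_A$ with $\sigma^\ast$ on $S$. That $\sigma$ is non--symplectic is forced by the antisymplecticity of $\iota$ together with the matching of holomorphic $2$--forms effected by $\Gamma$.

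With such a correspondence in hand, the statement on $A^2_{(2)}(X)$ follows from Voisin's theorem on the Bloch conjecture for $K3$ surfaces with non--symplectic involution, which gives $\sigma^\ast=-\ide$ on $A^2_{(2)}(S)$; this sign transports to $X_A$ via $\Gamma_\ast$, the role of $(\Pi_2^X)_\ast$ in the statement being precisely to pick out the transcendental summand, which is the image of $\Gamma_\ast$ restricted to $A^2_{(2)}(S)$.

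For the statement on $A^6_{(2)}(X)$, I would combine this with the (conjectural, but expected to hold here) multiplicative structure of the Chow--K\"unneth bigrading: any class in $A^6_{(2)}(X)$ should be expressible as $\alpha\cdot\beta$ with $\alpha\in A^2_{(2)}(X)$ and $\beta\in A^4_{(0)}(X)$, and since $\iota^\ast$ is a ring map with $\iota^\ast$--invariant ``algebraic'' pieces $A^\ast_{(0)}$, one has
\[ \iota^\ast(\alpha\cdot\beta)=\iota^\ast(\alpha)\cdot \iota^\ast(\beta)=(-\alpha)\cdot\beta=-(\alpha\cdot\beta)\,, \]
as wanted. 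A Voisin--type spread/specialisation argument then transfers the identity from a dense open subset of $\Delta^1$ on which the correspondence is controlled to the general point of $\Delta^1$.

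The main obstacle I anticipate is the first step: identifying the auxiliary $K3$ surface $S_A$ and producing the correspondence $\Gamma$ with both the required intertwining property and the required surjectivity. This calls for a concrete geometric analysis of the divisor $\Delta^1$ — understanding what kind of degeneration of Lagrangians it represents, and how that degeneration manifests itself as algebraic cycles on the EPW cube $D_2^A$ (and therefore on the cover $X_A$). Once this geometry is pinned down, the rest of the argument is essentially formal and should follow the pattern established in the author's prior work on double EPW sextics.
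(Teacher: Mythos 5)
Your proposal founders on the very first step: the auxiliary $K3$ surface $S_A$ does \emph{not} carry a non--symplectic involution. For general $A\in\Delta^1$ the surface $S_A$ is a \emph{generic} degree $10$ (Mukai model) $K3$ surface, with Picard rank~$1$, and such a surface admits no non--trivial automorphism. What theorem~\ref{ikkr}(\rom2) actually supplies is an anti--symplectic \emph{birational} involution of the Hilbert cube $(S_A)^{[3]}$ (equivalently of $X_A$), and the paper explicitly observes in remark~\ref{anti} that no interpretation of this involution in terms of an automorphism of $S_A$ itself is known. Hence no correspondence $\Gamma\in A^\ast(S\times X_A)$ ``intertwining $\iota^\ast$ with $\sigma^\ast$'' can exist (there is no $\sigma$ to intertwine with), and Voisin's Bloch--conjecture results for $K3$ surfaces with non--symplectic involutions are inapplicable. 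The paper's route is different: it transports the involution via Rie\ss's isomorphism and the de~Cataldo--Migliorini decomposition from $X_A$ to the \emph{triple self--product} $(S_b)^3$, spreads the relative correspondence $\Pi_{10}^{\Ss^{3/B}}\circ(\Gamma_{\iota_S}+\Delta)\circ\Pi_{10}^{\Ss^{3/B}}$ over the universal family of degree~$10$ Mukai models, and then --- after contracting with the correspondences $\Theta_k,\Xi_\ell$ of propositions~\ref{prod2}--\ref{prod3} --- applies Voisin's proposition~\ref{propvois} to the resulting classes $\Gamma_{k,\ell}\in A^2(\Ss\times_B\Ss)$. The only homological input needed is the anti--symplecticity of $\iota$, i.e.\ that $\iota^\ast+\ide$ kills $H^{6,4}$.

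There is also a secondary gap. You propose deducing the $A^6_{(2)}$ statement from the $A^2_{(2)}$ statement by writing classes in $A^6_{(2)}(X)$ as products $\alpha\cdot\beta$ with $\alpha\in A^2_{(2)}(X)$, $\beta\in A^4_{(0)}(X)$. This surjectivity of the intersection product is not a formal consequence of the MCK bigrading: it is exactly a ``hard Lefschetz'' statement for Chow groups, which the paper proves by a separate and substantial spread argument (theorem~\ref{hard}, feeding into corollary~\ref{hardepw}). The paper in fact argues in the \emph{opposite} direction --- it proves the $A^6_{(2)}$ statement first and then uses the isomorphism $\cdot L^4\colon A^2_{(2)}(X)\xrightarrow{\cong}A^6_{(2)}(X)$ (with $L$ a $\iota$--invariant ample class pulled back from the EPW cube) to deduce the $A^2_{(2)}$ statement, only up to an $A^2_{(0)}$--error term; that error term is the reason the second statement carries the projector $(\Pi_2^X)_\ast$, a nuance your plan does not account for.
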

        
The 
divisor $\Delta^1$ is such that for $A\in\Delta^1$ general, the double EPW cube $X_A$ is birational to a Hilbert scheme $(S_A)^{[3]}$, where $S_A$ is a degree $10$ $K3$ surface. Since Hilbert schemes $S^{[m]}$ of $K3$ surfaces $S$ have a multiplicative Chow--K\"unneth decomposition \cite{V6}, double EPW cubes $X=X_A$ as in theorem \ref{main} have a bigraded Chow ring $A^\ast_{(\ast)}(X)$ (cf. corollary \ref{mck} below). The correspondence $\Pi_2^X$ is a projector on $A^2_{(2)}(X)$.

 To prove theorem \ref{main}, we employ the method of ``spread'' of algebraic cycles as developed by Voisin \cite{V0}, \cite{V1}. 
%
%
         Theorem \ref{main} has some rather striking consequences for the Chow ring of the EPW cubes in the $19$--dimensional family under consideration (these consequences exploit the existence of a multiplicative Chow--K\"unneth decomposition for $X$ as in theorem \ref{main}):
  
  \begin{nonumberingc}[=corollary \ref{cor}]  Let $D=D_2^A$ be an EPW cube for $A\in\Delta^1$ general. 
  
  \noindent
  (\rom1) Let $a\in A^6(D)$ be a $0$--cycle which is either in the image of the intersection product map
    \[ A^2(D)\otimes A^2(D)\otimes A^2(D)\ \to\ A^6(D)\ ,\] 
    or in the image of the intersection product map
    \[ A^3(D)\otimes A^2(D)\otimes A^1(D)\ \to\ A^6(D)\ .\]
    Then $a$ is rationally trivial if and only if $a$ has degree $0$.
    
   \noindent
   (\rom2) Let $a\in A^5(D)$ be a $1$--cycle which is in the image of the intersection product map
    \[ A^2(D)\otimes A^2(D)\otimes A^1(D)\ \to\ A^5(D)\ .\]
    Then $a$ is rationally trivial if and only if $a$ is homologically trivial. 
  \end{nonumberingc}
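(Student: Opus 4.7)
The plan is to pass to the double cover $X=X_A$ via the quotient $\pi\colon X\to D$, use the multiplicative Chow--K\"unneth decomposition (MCK) of corollary \ref{mck}, and combine theorem \ref{main} with Voisin's results on the Beauville--Voisin subring of $(S_A)^{[3]}$ \cite{V6}. Since $\pi$ has Galois involution $\iota$, pull--back induces an isomorphism $\pi^\ast\colon A^\ast(D)\xrightarrow{\sim} A^\ast(X)^\iota$, and $\iota$ preserves the bigrading $A^\ast_{(\ast)}(X)$. Combined with theorem \ref{main}, the triviality $A^1_{\mathrm{hom}}(X)=0$ (since $h^{1,0}(X)=0$), and the vanishing of odd--weight pieces for $K3^{[n]}$--type varieties, one obtains
\[ A^1(X)^\iota\subseteq A^1_{(0)}(X),\quad A^2(X)^\iota\subseteq A^2_{(0)}(X),\quad A^6_{(2)}(X)^\iota=0. \]
The last two equalities come from theorem \ref{main}: an $\iota$--invariant $\alpha\in A^2_{(2)}(X)$ satisfies $\alpha=(\Pi_2^X)_\ast\alpha=(\Pi_2^X)_\ast\iota^\ast\alpha=-\alpha$, and similarly for $A^6_{(2)}(X)$.

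For part (\rom1), when $a=\alpha_1\alpha_2\alpha_3$ with $\alpha_i\in A^2(D)$, multiplicativity of the bigrading yields $\pi^\ast a\in A^2_{(0)}(X)^{\cdot 3}\subseteq A^6_{(0)}(X)$. When $a=\beta\gamma\delta$ with $\beta\in A^3(D)$, $\gamma\in A^2(D)$, $\delta\in A^1(D)$, one has $\pi^\ast\gamma\cdot\pi^\ast\delta\in A^3_{(0)}(X)$; writing $\pi^\ast\beta=\beta_0+\beta_2$ according to $A^3(X)=A^3_{(0)}\oplus A^3_{(2)}$ (by Bloch--Beilinson type bounds plus odd vanishing), the equivariance of the grading forces each $\beta_j$ to be $\iota$--invariant, so
\[ \pi^\ast a\ \in\ A^6_{(0)}(X)\ \oplus\ A^6_{(2)}(X)^\iota\ =\ A^6_{(0)}(X). \]
In either case $\pi^\ast a$ lies in $A^6_{(0)}(X)=\QQ[\mathrm{pt}]$ (Beauville--Voisin for $K3^{[3]}$, \cite{V6}), hence is determined by its degree; the same therefore holds for $a=\tfrac12\pi_\ast\pi^\ast a$.

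For part (\rom2), with $a=\gamma_1\gamma_2\delta$, $\gamma_i\in A^2(D)$, $\delta\in A^1(D)$, one has $\pi^\ast a\in A^2_{(0)}(X)^{\cdot 2}\cdot A^1_{(0)}(X)\subseteq A^5_{(0)}(X)$. By \cite{V6}, the Beauville--Voisin subring $\bigoplus_i A^i_{(0)}(X)$ injects into $H^\ast(X,\QQ)$, so homological triviality of $a$ implies $\pi^\ast a=0$ and hence $a=0$; the converse is trivial.

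The main obstacle is the second case of part (\rom1): beyond the new input $A^6_{(2)}(X)^\iota=0$ from theorem \ref{main}, one must check that $\iota$ is compatible with the MCK on $X$ and that $A^3(X)$ is concentrated in weights $0$ and $2$, so that the decomposition $\pi^\ast\beta=\beta_0+\beta_2$ splits equivariantly and the bigraded bookkeeping closes.
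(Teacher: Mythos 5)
Your overall plan—pass to the double cover $p\colon X\to D$, use the bigrading from the MCK decomposition, and show that $\iota$-invariant cycles in the relevant degrees land in the weight-zero pieces—is the same as the paper's, but there is a genuine gap at the key step. You assert that "$\iota$ preserves the bigrading $A^\ast_{(\ast)}(X)$" and use it to conclude $A^2(X)^\iota\subseteq A^2_{(0)}(X)$ (and likewise that the components $\beta_0,\beta_2$ of $\pi^\ast\beta$ are $\iota$-invariant). This assertion is not established, and it does not follow from theorem \ref{main}: that theorem only computes $(\Pi_2^X)_\ast\iota^\ast$ on $A^2_{(2)}(X)$, precisely because one does not a priori know that $\iota^\ast$ commutes with the Chow--K\"unneth projectors. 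Your computation $\alpha=(\Pi_2^X)_\ast\iota^\ast\alpha=-\alpha$ correctly gives $A^2_{(2)}(X)^\iota=0$, but this does \emph{not} yield $A^2(X)^\iota\subseteq A^2_{(0)}(X)$: for $b\in A^2(X)^\iota$ with $b=b_0+b_2$, $b_j\in A^2_{(j)}(X)$, nothing guarantees that $b_2$ is itself $\iota$-invariant.

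This is exactly what the paper's lemma \ref{in0} establishes, and its proof is the technical core of the corollary. The point is sublemma \ref{preserve}: one shows $\iota^\ast(c_0)\in A^2_{(0)}(X)$ by multiplying by $L^4$ for a $\iota$-invariant ample class $L$, observing (via lemma \ref{60}) that $\iota^\ast(c_0)\cdot L^4=c_0\cdot L^4\in A^6_{(0)}(X)$, and then using the \emph{injectivity} of the hard Lefschetz map $\cdot L^4\colon A^2_{(2)}(X)\to A^6_{(2)}(X)$ from corollary \ref{hardepw} to kill the $A^2_{(2)}$-component of $\iota^\ast(c_0)$. That is where the entire machinery of Section 3 (spread of cycles, Voisin's method, the Mukai model) gets used. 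You flag this issue at the very end ("one must check that $\iota$ is compatible with the MCK on $X$") but treat it as a routine verification to be filled in; in fact it is the heart of the matter, and without the hard Lefschetz ingredient your proof does not close. Incidentally, for the second case of (\rom1) the paper avoids needing $\iota$-invariance of the individual pieces $\beta_j$ by arguing directly in $A^6(X)$: once one knows $p^\ast(a)\in A^6_{(0)}(X)\oplus A^6_{(2)}(X)$, the $\iota$-invariance of $p^\ast(a)$ together with $\iota^\ast=\ide$ on $A^6_{(0)}$ (lemma \ref{60}) and $\iota^\ast=-\ide$ on $A^6_{(2)}$ (theorem \ref{main}) forces the $A^6_{(2)}$-component to vanish, which is cleaner than your route.
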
  
  
  (NB: the EPW cube $D$ is not smooth, but it is a quotient of a smooth variety; as such, the Chow groups of $D$ still have a ring structure, cf. subsection \ref{ssquot} below.)
  
  Corollary \ref{cor} is similar to multiplicative results in the Chow ring of $K3$ surfaces \cite{BV}, in the Chow ring of Hilbert schemes of $K3$ surfaces and of abelian surfaces \cite{V6}, and in the Chow ring of Calabi--Yau complete intersections \cite{V13}, \cite{LFu}. A more general version of corollary \ref{cor}, concerning certain product varieties, can be proven similarly (corollary \ref{cor2}).
  
  It is my hope this note will stimulate further research on this topic. For one thing, it would be interesting to prove theorem \ref{main} for {\em all\/} double EPW cubes, and corollary \ref{cor} for {\em all\/} EPW cubes.

 \vskip0.6cm

\begin{convention} In this article, the word {\sl variety\/} will refer to a reduced irreducible scheme of finite type over $\C$. A {\sl subvariety\/} is a (possibly reducible) reduced subscheme which is equidimensional. 

{\bf All Chow groups will be with rational coefficients}: we will denote by 
  \[ A_j(X):=CH^j(X)\otimes\QQ\] 
  the Chow group of $j$--dimensional cycles on $X$ with $\QQ$--coefficients. For $X$ smooth of dimension $n$ we will write
    \[ A^i(X):=A_{n-i}(X)\ .\]
  The notations $A^i_{hom}(X)$, $A^i_{AJ}(X)$, $A^i_{alg}(X)$ will be used to indicate the subgroups of homologically trivial, resp. Abel--Jacobi trivial, resp. algebraically trivial cycles.
For a morphism $f\colon X\to Y$, we will write 
  \[\Gamma_f\in A_\ast(X\times Y)\] 
  for the graph of $f$.
The contravariant category of Chow motives (i.e., pure motives with respect to rational equivalence as in \cite{Sc}, \cite{MNP}) will be denoted $\MM_{\rm rat}$.



We will use $H^j(X)$ 
to indicate singular cohomology $H^j(X,\QQ)$.

\end{convention}

\section{Preliminaries}

\subsection{Quotient varieties}
\label{ssquot}

\begin{definition} A {\em projective quotient variety\/} is a variety
  \[ X=Y/G\ ,\]
  where $Y$ is a smooth projective variety and $G\subset\hbox{Aut}(Y)$ is a finite group.
  \end{definition}
  
 \begin{proposition}[Fulton \cite{F}]\label{quot} Let $X$ be a projective quotient variety of dimension $n$. Let $A^\ast(X)$ denote the operational Chow cohomology ring. The natural map
   \[ A^i(X)\ \to\ A_{n-i}(X) \]
   is an isomorphism for all $i$.
   \end{proposition}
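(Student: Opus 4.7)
The plan is to construct an explicit two-sided inverse to the natural evaluation map $A^i(X)\to A_{n-i}(X)$, $c\mapsto c\cap [X]$. Two inputs drive the argument. First, since $Y$ is smooth, Poincar\'e duality for operational cohomology gives an isomorphism $A^i(Y)\xrightarrow{\sim} A_{n-i}(Y)$ via cap product with $[Y]$ (this is already in Fulton's intersection theory). Second, with $\QQ$--coefficients, the projection $\pi\colon Y\to X=Y/G$ satisfies $\pi_\ast\pi^\ast=|G|\cdot\mathrm{id}$, so $\pi_\ast$ induces an isomorphism $A_\ast(Y)^G\xrightarrow{\sim}A_\ast(X)$, with inverse $\tfrac{1}{|G|}\pi^\ast$.

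Given $\alpha\in A_{n-i}(X)$, first I would lift it to a $G$--invariant class $\tilde\alpha\in A_{n-i}(Y)^G$ using the second input, and then transport $\tilde\alpha$ to an operational class $\tilde c\in A^i(Y)$ using Poincar\'e duality on $Y$. This $\tilde c$ is automatically $G$--equivariant (because $\tilde\alpha$ is $G$--invariant and $Y$ is smooth, so the isomorphism $A^\ast(Y)\cong A_\ast(Y)$ is $G$--equivariant).

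Next I would descend $\tilde c$ to an operational class $c_\alpha\in A^i(X)$. Concretely, for any morphism $f\colon W\to X$ and class $\beta\in A_\ast(W)$, form the fiber square
\[
\begin{array}{ccc}
W' & \xrightarrow{q} & Y\\
\downarrow p & & \downarrow \pi\\
W & \xrightarrow{f} & X
\end{array}
\]
with $W'=W\times_X Y$, and set $c_\alpha\cap \beta := \tfrac{1}{|G|}\, p_\ast\bigl(\tilde c\cap q^\ast\beta\bigr)$. The required Fulton axioms for an operational class (compatibility with proper push--forward, flat pull--back, intersection with Cartier divisors) reduce by base change in the fiber square to the corresponding axioms for $\tilde c$ on the smooth variety $Y$, where they hold for free. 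The $G$--equivariance of $\tilde c$ ensures the factor $\tfrac{1}{|G|}$ absorbs the ambiguity coming from the choice of $\tilde\alpha$.

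Finally, I would check the two composites. Evaluating $c_\alpha$ against $[X]$ yields $\tfrac{1}{|G|}\pi_\ast(\tilde c\cap [Y])=\tfrac{1}{|G|}\pi_\ast\tilde\alpha=\alpha$, giving one direction. In the other direction, a class $c\in A^i(X)$ is recovered from $c\cap [X]$ because both sides agree after pulling back to $Y$, where $\pi^\ast$ is injective on $A^\ast$ with $\QQ$--coefficients (as $\pi_\ast\pi^\ast=|G|\cdot\mathrm{id}$), and on the smooth variety $Y$ an operational class is determined by its value on the fundamental class. The only delicate point, and the main bookkeeping obstacle, is verifying that the formula $c_\alpha\cap\beta=\tfrac{1}{|G|}p_\ast(\tilde c\cap q^\ast\beta)$ is independent of choices and defines a bona fide operational class; once this is in place, the inverse property is essentially formal.
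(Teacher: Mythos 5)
The paper's ``proof'' is just a citation to Fulton, Example 17.4.10, and your outline is essentially the argument one expects to underlie that reference; the strategy (Poincar\'e duality on the smooth cover $Y$, plus descent through the $G$--quotient) is the right one. But two of your steps paper over exactly the points where the real content sits. First, in the formula $c_\alpha\cap\beta:=\tfrac{1}{|G|}p_\ast(\tilde c\cap q^\ast\beta)$ there is a type error: $\beta$ lives on $W$, so you need a class on $W'$ obtained from $\beta$ via $p$, not via $q$; and $p\colon W'=W\times_X Y\to W$ is finite but \emph{not} flat, so no ready--made ``$p^\ast$'' exists. To produce the required transfer $A_\ast(W)_\QQ\to A_\ast(W')_\QQ$ one has to rerun Fulton's Example 1.7.6 for the base--changed quotient; this works because $W'/G\to W$ is a universal homeomorphism, but the correct normalization is not a uniform $\tfrac{1}{|G|}$ once $W$ meets the branch locus (stabilizers change the local degree). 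Second, the injectivity step invokes $\pi_\ast\pi^\ast=|G|\cdot\mathrm{id}$ on operational $A^\ast$, but a Gysin push--forward $\pi_\ast$ on $A^\ast$ is not defined for the finite, non--flat, non--lci map $\pi\colon Y\to X$. Injectivity of $\pi^\ast\colon A^\ast(X)_\QQ\to A^\ast(Y)_\QQ$ should instead be deduced from surjectivity of proper push--forward on $\QQ$--Chow groups: if $\pi^\ast c=0$ then, in the fiber square over any $f\colon W\to X$, the class $c$ annihilates the image of $p_\ast\colon A_\ast(W')_\QQ\to A_\ast(W)_\QQ$, and that image is everything because $p$ is proper and surjective.

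Finally, the assertion that checking the bivariant axioms for $c_\alpha$ is ``essentially formal'' is optimistic: each of compatibility with proper push--forward, flat pull--back and intersection with divisors needs a base--change lemma for the transfer you just constructed, and these lemmas are the actual substance of Fulton's example. None of this is fatal --- your outline is the right skeleton, and it matches the reference the paper invokes --- but as written it is a plan for a proof rather than a proof.
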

   
   \begin{proof} This is \cite[Example 17.4.10]{F}.
      \end{proof}

\begin{remark} It follows from proposition \ref{quot} that the formalism of correspondences goes through unchanged for projective quotient varieties (this is also noted in \cite[Example 16.1.13]{F}). We can thus consider motives $(X,p,0)\in\MM_{\rm rat}$, where $X$ is a projective quotient variety and $p\in A^n(X\times X)$ is a projector. For a projective quotient variety $X=Y/G$, one readily proves (using Manin's identity principle) that there is an isomorphism
  \[  h(X)\cong h(Y)^G:=(Y,\Delta^G_Y,0)\ \ \ \hbox{in}\ \MM_{\rm rat}\ ,\]
  where $\Delta^G_Y$ denotes the idempotent ${1\over \vert G\vert}{\sum_{g\in G}}\Gamma_g$.  
  \end{remark}

\subsection{MCK decomposition}
\label{ss1}

\begin{definition}[Murre \cite{Mur}] Let $X$ be a smooth projective variety of dimension $n$. We say that $X$ has a {\em CK decomposition\/} if there exists a decomposition of the diagonal
   \[ \Delta_X= \pi_0+ \pi_1+\cdots +\pi_{2n}\ \ \ \hbox{in}\ A^n(X\times X)\ ,\]
  such that the $\pi_i$ are mutually orthogonal idempotents in $A^n(X\times X)$ and $(\pi_i)_\ast H^\ast(X)= H^i(X)$.
  
  (NB: ``CK decomposition'' is shorthand for ``Chow--K\"unneth decomposition''.)
\end{definition}

\begin{remark} The existence of a CK decomposition for any smooth projective variety is part of Murre's conjectures \cite{Mur}, \cite{J2}, \cite{J4}. 
\end{remark}

\begin{definition}[Shen--Vial \cite{SV}] Let $X$ be a smooth projective variety of dimension $n$. Let $\Delta_X^{sm}\in A^{2n}(X\times X\times X)$ be the class of the small diagonal
  \[ \Delta_X^{sm}:=\bigl\{ (x,x,x)\ \vert\ x\in X\bigr\}\ \subset\ X\times X\times X\ .\]
  An {\em MCK decomposition\/} is a CK decomposition $\{\pi^X_i\}$ of $X$ that is {\em multiplicative\/}, i.e. it satisfies
  \[ \pi^X_k\circ \Delta_X^{sm}\circ (\pi^X_i\times \pi^X_j)=0\ \ \ \hbox{in}\ A^{2n}(X\times X\times X)\ \ \ \hbox{for\ all\ }i+j\not=k\ .\]
  
 (NB: ``MCK decomposition'' is shorthand for ``multiplicative Chow--K\"unneth decomposition''.) 
  
 A {\em weak MCK decomposition\/} is a CK decomposition $\{\pi^X_i\}$ of $X$ that satisfies
    \[ \Bigl(\pi^X_k\circ \Delta_X^{sm}\circ (\pi^X_i\times \pi^X_j)\Bigr){}_\ast (a\times b)=0 \ \ \ \hbox{for\ all\ } a,b\in\ A^\ast(X)\ .\]
  \end{definition}
  
  \begin{remark} The small diagonal (seen as a correspondence from $X\times X$ to $X$) induces the {\em multiplication morphism\/}
    \[ \Delta_X^{sm}\colon\ \  h(X)\otimes h(X)\ \to\ h(X)\ \ \ \hbox{in}\ \MM_{\rm rat}\ .\]
 Suppose $X$ has a CK decomposition
  \[ h(X)=\bigoplus_{i=0}^{2n} h^i(X)\ \ \ \hbox{in}\ \MM_{\rm rat}\ .\]
  By definition, this decomposition is multiplicative if for any $i,j$ the composition
  \[ h^i(X)\otimes h^j(X)\ \to\ h(X)\otimes h(X)\ \xrightarrow{\Delta_X^{sm}}\ h(X)\ \ \ \hbox{in}\ \MM_{\rm rat}\]
  factors through $h^{i+j}(X)$.
  
  If $X$ has a weak MCK decomposition, then setting
    \[ A^i_{(j)}(X):= (\pi^X_{2i-j})_\ast A^i(X) \ ,\]
    one obtains a bigraded ring structure on the Chow ring: that is, the intersection product sends $A^i_{(j)}(X)\otimes A^{i^\prime}_{(j^\prime)}(X) $ to  $A^{i+i^\prime}_{(j+j^\prime)}(X)$.
    
      It is expected (but not proven !) that for any $X$ with a weak MCK decomposition, one has
    \[ A^i_{(j)}(X)\stackrel{??}{=}0\ \ \ \hbox{for}\ j<0\ ,\ \ \ A^i_{(0)}(X)\cap A^i_{hom}(X)\stackrel{??}{=}0\ ;\]
    this is related to Murre's conjectures B and D, that have been formulated for any CK decomposition \cite{Mur}.

  The property of having an MCK decomposition is severely restrictive, and is closely related to Beauville's ``(weak) splitting property'' \cite{Beau3}. For more ample discussion, and examples of varieties with an MCK decomposition, we refer to \cite[Section 8]{SV}, as well as \cite{V6}, \cite{SV2}, \cite{FTV}.
    \end{remark}
    
  \begin{lemma}\label{hk} Let $X, X^\prime$ be birational hyperk\"ahler varieties. Then $X$ has an MCK decomposition if and only if $X^\prime$ has one.
  \end{lemma}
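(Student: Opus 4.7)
The plan is to invoke Rieß's theorem (which refines Huybrechts's deformation result for birational hyperkähler varieties) asserting that birational hyperkähler varieties have \emph{isomorphic Chow motives as algebra objects}. Concretely, a birational map $f \colon X \dashrightarrow X'$ between hyperkähler varieties is known to induce an isomorphism $\Phi \colon h(X) \xrightarrow{\sim} h(X')$ in $\MM_{\rm rat}$ that intertwines the multiplication morphisms provided by the small diagonals, i.e.
\[ \Phi \circ \Delta_X^{sm} = \Delta_{X'}^{sm} \circ (\Phi \otimes \Phi)\ \ \ \hbox{in}\ \MM_{\rm rat}\ . \]

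Granting this, I would transport a given MCK decomposition $\{\pi^X_i\}_{i=0}^{2n}$ of $X$ to $X'$ by setting
\[ \pi^{X'}_i := \Phi \circ \pi^X_i \circ \Phi^{-1}\ \in\ A^n(X' \times X')\ , \]
and verify the three defining conditions in order: (a) mutual orthogonality and the identity $\sum_i \pi^{X'}_i = \Delta_{X'}$ follow immediately since $\Phi$ is an isomorphism in $\MM_{\rm rat}$; (b) the K\"unneth property $(\pi^{X'}_i)_\ast H^\ast(X') = H^i(X')$ follows from the corresponding property for $\pi^X_i$ together with the fact that $\Phi_\ast$ is an isomorphism on singular cohomology; (c) for multiplicativity, substituting the definition of $\pi^{X'}_i$ into $\pi^{X'}_k \circ \Delta_{X'}^{sm} \circ (\pi^{X'}_i \otimes \pi^{X'}_j)$ and using the intertwining identity (and its inverse form $\Phi^{-1} \circ \Delta_{X'}^{sm} \circ (\Phi \otimes \Phi) = \Delta_X^{sm}$) rewrites this composition as $\Phi \circ \bigl( \pi^X_k \circ \Delta_X^{sm} \circ (\pi^X_i \otimes \pi^X_j) \bigr) \circ (\Phi^{-1} \otimes \Phi^{-1})$, which vanishes for $i+j \neq k$ because $\{\pi^X_i\}$ is MCK. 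The argument is symmetric in $X$ and $X'$, which handles the ``if and only if''.

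The main (and really only) nontrivial input is the first step: one needs not merely an isomorphism of Chow motives, but one \emph{compatible with the small-diagonal multiplication}. This algebra-isomorphism refinement is where Rieß's theorem does the real work; once it is granted, the remainder of the proof is a formal transport of structure along an isomorphism in $\MM_{\rm rat}$ and requires no further geometric input.
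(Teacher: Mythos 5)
Your proposal matches the paper's approach exactly: the paper's proof is a one-line sketch ("Rieß's result implies that $X$ and $X'$ have isomorphic Chow motives and the isomorphism is compatible with the multiplicative structure," with details deferred to [EPW, Lemma 2.13]), and your argument is precisely the spelled-out version — transport the projectors by conjugation $\pi^{X'}_i := \Phi \circ \pi^X_i \circ \Phi^{-1}$ and check the three MCK conditions formally, with the algebra-compatibility of Rieß's isomorphism carrying all the weight.
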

  
  \begin{proof} This is noted in \cite[Introduction]{V6}; the idea is that Rie\ss's result \cite{Rie} implies that $X$ and $X^\prime$ have isomorphic Chow motives and the isomorphism is compatible with the multiplicative structure. (For a detailed proof, cf. \cite[Lemma 2.13]{EPW}.)
  \end{proof}

\subsection{MCK for $S^{[m]}$}


\begin{theorem}[Vial \cite{V6}]\label{charles} Let $S$ be a projective $K3$ surface, and let $X=S^{[m]}$ be the Hilbert scheme of length $m$ subschemes of $S$. Then $X$ has a self--dual MCK decomposition $\{ \Pi^X_i\}$. In particular, $A^\ast(X)=A^\ast_{(\ast)}(X)$ is a bigraded ring, where
  \[ A^i(X)=\bigoplus_{j= 2i-2n }^i A^i_{(j)}(X)\ ,\]
  and $A^i_{(j)}(X)=0$ for $j$ odd.
\end{theorem}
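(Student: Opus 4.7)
The plan is to bootstrap from the case $m=1$: first construct an MCK decomposition of the $K3$ surface $S$ itself, then transport it to the symmetric product $S^{(m)}$, and finally to $S^{[m]}$ via the motivic decomposition of de Cataldo--Migliorini. I would proceed in four steps, the last being the serious one.

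\textbf{Step 1.} (MCK for $S$.) Beauville--Voisin produced a distinguished $0$--cycle $o_S\in A^2(S)$ such that the image of the intersection product $A^1(S)\otimes A^1(S)\to A^2(S)$, together with $c_2(S)$, lies in $\QQ\cdot o_S$. Setting $\pi^S_0:=o_S\times S$, $\pi^S_4:=S\times o_S$, and $\pi^S_2:=\Delta_S-\pi^S_0-\pi^S_4$, one obtains a self--dual CK decomposition of $S$, and the multiplicativity relations $\pi^S_k\circ \Delta^{sm}_S\circ(\pi^S_i\otimes\pi^S_j)=0$ for $i+j\neq k$ follow directly from the Beauville--Voisin relation. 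Note also that $\pi^S_1=\pi^S_3=0$ since $H^{\mathrm{odd}}(S)=0$.

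\textbf{Step 2.} (MCK for $S^m$ and $S^{(m)}$.) MCK is stable under products: the K\"unneth projectors $\pi^{S^m}_i:=\sum_{i_1+\cdots+i_m=i}\pi^S_{i_1}\otimes\cdots\otimes\pi^S_{i_m}$ form a self--dual MCK decomposition of $S^m$. Since these projectors are $\Sy_m$--equivariant, averaging against the symmetrizer $\Delta^{\Sy_m}_{S^m}$ yields a self--dual MCK decomposition of the motive $h(S^{(m)})=h(S^m)^{\Sy_m}\in\MM_{\rm rat}$, using the formalism of projective quotient varieties recalled above.

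\textbf{Step 3.} (Transport via de Cataldo--Migliorini.) One has a canonical isomorphism of Chow motives
\[ h(S^{[m]})\ \cong\ \bigoplus_{\lambda\vdash m} h(S^{(\ell(\lambda))})(\ell(\lambda)-m)\ \ \ \hbox{in}\ \MM_{\rm rat}\ ,\]
indexed by partitions of $m$, realized via explicit correspondences built from the incidence varieties of nested Hilbert schemes. Pulling back the MCK projectors from Step 2 through this isomorphism defines candidate projectors $\Pi^X_i$ on $X=S^{[m]}$. Self--duality and the cohomological identity $(\Pi^X_i)_\ast H^\ast(X)=H^i(X)$ are formal consequences.

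\textbf{Step 4.} (Multiplicativity --- the main obstacle.) Checking that $\{\Pi^X_i\}$ is \emph{multiplicative} requires understanding the small diagonal $\Delta^{sm}_{S^{[m]}}$ in the de Cataldo--Migliorini basis. One expresses the cup product on $h(S^{[m]})$ as a sum of correspondences between symmetric products, using the motivic lift of Lehn--Sorger's and Uribe's description of the cohomological cup product via Nakajima operators. The crucial ingredient is again the Beauville--Voisin relation on $S$, which ensures that the ``correction terms'' in the Nakajima formula (involving pushforward by partial diagonals) respect the grading coming from the MCK on $S^m$. This is where the $K3$ hypothesis is essential and where most of the technical work of \cite{V6} lies. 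Finally, the vanishing $A^i_{(j)}(X)=0$ for $j$ odd is automatic: each $\pi^{S^m}_k$ with $k$ odd vanishes by Step 1, so the same holds for $\Pi^X_k$ with $k$ odd, and the bigrading $A^i_{(j)}(X)=(\Pi^X_{2i-j})_\ast A^i(X)$ is supported only on even $j$.
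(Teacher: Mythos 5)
The paper itself offers no proof of this statement: it simply cites \cite[Theorems 1 and 2]{V6}, so there is no ``paper's argument'' to compare against. Your four-step outline is a fair summary of the route Vial actually takes, and Steps 1--3 together with the parity argument at the end of Step 4 are correct as stated: the Beauville--Voisin projectors give a self--dual MCK for $S$, products of MCK decompositions are again MCK by \cite[Theorem 8.6]{SV}, the de Cataldo--Migliorini isomorphism $h(S^{[m]})\cong\bigoplus_{\lambda\vdash m}h(S^{(\ell(\lambda))})(\ell(\lambda)-m)$ transports the CK projectors, and since $\pi^S_1=\pi^S_3=0$ every product projector of odd degree vanishes, forcing $\Pi^X_j=0$ for $j$ odd and hence $A^i_{(j)}(X)=0$ for $j$ odd.

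However, the proposal as written has two genuine soft spots. First, in Step 2 the claim that ``averaging against the symmetrizer yields a self--dual MCK decomposition of $h(S^{(m)})$'' is asserted but not argued: descent of the \emph{multiplicativity} relation along the quotient $S^m\to S^{(m)}$ requires comparing the small diagonal $\Delta^{sm}_{S^{(m)}}$ with the pushforward of $\Delta^{sm}_{S^m}$ under the triple quotient map and checking orthogonality survives the averaging; this is true but is a lemma, not a formality. Second, and more seriously, Step 4 is where essentially the whole content of Vial's theorem lives, and your text amounts to a pointer (``use the motivic lift of the Lehn--Sorger/Li--Qin--Wang cup-product formula and check the correction terms respect the grading'') rather than an argument. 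The difficulty is precisely to show that the explicit correspondences appearing in the de Cataldo--Migliorini isomorphism and in the motivic cup product are of pure grade $0$ for the product MCK bigrading on $S^\ell$; this hinges on delicate identities involving partial diagonals and the Beauville--Voisin $0$--cycle (cf. also the compatibility issues discussed in the paper's Lemma~\ref{compat} and the remark following it, where even the much weaker ``pure grade $0$'' statement is flagged as open in general). So the scaffolding is right and the vanishing claims are correctly deduced, but the multiplicativity claim --- the heart of the theorem --- is not actually proved in the proposal.
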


\begin{proof} This is \cite[Theorems 1 and 2]{V6}.
\end{proof}

\begin{remark} Let $X$ be as in theorem \ref{charles} and suppose $m=2$ (i.e. $X=S^{[2]}$ is a hyperk\"ahler fourfold). Then the bigrading $A^\ast_{(\ast)}(X)$ of theorem \ref{charles} has an interesting alternative description in terms of a Fourier operator on Chow groups \cite{SV}. For $m>2$, there is no such ``Fourier operator'' description of the bigrading $A^\ast_{(\ast)}(S^{[m]})$; the bigrading is defined exclusively by an MCK decomposition.

Another point particular to $m=2$ is that (thanks to \cite{SV}) we know that
  \[ A^i_{(j)}(S^{[2]})=0\ \ \ \forall j<0\ .\]
  This vanishing statement is (conjecturally true but) open for $S^{[m]}$ with $m>2$.
\end{remark}

Any $K3$ surface $S$ has an MCK decomposition \cite[Example 8.17]{SV}. Since this property is stable under products \cite[Theorem 8.6]{SV}, $S^m$ also has an MCK decomposition. The following lemma records a basic compatibility between the bigradings on $A^\ast(S^{[m]})$ and on $A^\ast(S^m)$:

\begin{lemma}\label{compat} Let $S$ be a $K3$ surface, and let $X=S^{[m]}$. Let $\Phi\in A^{2m}(X\times S^m)$ be the correspondence coming from the diagram
  \[ \begin{array}[c]{ccc}
          S^{[m]} & \xleftarrow{} &\wt{S^m}\\
         {\scriptstyle h} \downarrow\ \ \  && \downarrow\\
          S^{(m)} & \xleftarrow{g}& S^m\\
          \end{array}\]
       (the arrow labelled $h$ is the Hilbert--Chow morphism; the right vertical arrow is the blow--up of the diagonal). Then
     \[  \begin{split} &(\Phi)_\ast R(X)\ \subset\ R(S^m)\ ,\\     
                            &({}^t\Phi)_\ast R(S^m)\ \subset\ R(X)\ ,\\
                            \end{split}\]
                    where $R()=A^{2m}_{(j)}()$ or $A^2_{(2)}()$.        
\end{lemma}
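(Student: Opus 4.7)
The plan is to exhibit $\Phi$ and ${}^t\Phi$ as morphisms of Chow motives that intertwine the MCK projectors on $X = S^{[m]}$ and on $S^m$ respectively. Once this intertwining is in place, the lemma is immediate, since each piece $A^i_{(j)}(-)$ is, by definition, the image of the corresponding MCK projector.

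First, I would recall that $S^m$ carries the product MCK decomposition $\{\Pi^{S^m}_i\}$ built from the MCK decomposition of the $K3$ surface $S$ (via \cite[Theorem 8.6]{SV}), which is automatically $\Sigma_m$-equivariant. Next, I would unpack Vial's construction in \cite{V6}: the MCK projectors $\Pi^X_i$ on $X = S^{[m]}$ are produced by transporting, through correspondences coming precisely from the diagram in the statement of the lemma (plus variants involving partial diagonals arising in the de Cataldo--Migliorini decomposition), the symmetric part of $\Pi^{S^m}_i$. The defining feature of this construction is that $\Phi$ satisfies an intertwining identity of the form
\[ \Phi \circ \Pi^X_i \;=\; \Pi^{S^m}_i \circ \Phi \qquad \text{in } A^{2m}(X \times S^m), \]
together with the transposed identity $\Pi^X_i \circ {}^t\Phi = {}^t\Phi \circ \Pi^{S^m}_i$ on the other side.

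Granting the intertwining, the proof finishes in one line: for $a \in A^i_{(j)}(X) = (\Pi^X_{2i-j})_\ast A^i(X)$, write $a = (\Pi^X_{2i-j})_\ast a'$ and compute
\[ \Phi_\ast a \;=\; \Phi_\ast (\Pi^X_{2i-j})_\ast a' \;=\; (\Pi^{S^m}_{2i-j})_\ast \Phi_\ast a' \;\in\; A^i_{(j)}(S^m), \]
and symmetrically for ${}^t\Phi_\ast$. Specializing to $(i,j) = (2m,j)$ and to $(i,j) = (2,2)$ gives exactly the two inclusions claimed.

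The main obstacle is to extract the intertwining formula above cleanly from \cite{V6}, where it is phrased through the de Cataldo--Migliorini motivic decomposition $h(S^{[m]}) \cong \bigoplus_\lambda h(S^{(\lambda)})(\ell_\lambda)$ rather than as a single identity on $X \times S^m$. In particular, $\Phi$ itself may only intertwine $\Pi^X_i$ with the $\Sigma_m$-symmetrization of $\Pi^{S^m}_i$, so one has to verify that the symmetrization is invisible after restricting to the pieces $A^{2m}_{(j)}$ and $A^2_{(2)}$ on the $S^m$ side; this follows from the $\Sigma_m$-equivariance of the product MCK decomposition, so the subtlety is bookkeeping rather than a genuine difficulty.
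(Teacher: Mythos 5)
Your proposed intertwining identity $\Phi \circ \Pi^X_i = \Pi^{S^m}_i \circ \Phi$ in $A^{2m}(X \times S^m)$ is exactly the statement that $\Phi$ is ``of pure grade $0$'' in the sense of Shen--Vial, and the paper's author explicitly disclaims being able to prove this: see the remark immediately following the lemma, which states that lemma \ref{compat} is ``probably true for any $(i,j)$'' but ``I have not been able to prove this.'' Your proposal therefore rests on a claim that the author of the paper could not establish, and which would, if true, yield a strictly stronger conclusion than what is stated.

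You do flag a potential obstruction, but you misidentify it. You attribute the difficulty to the $\Sigma_m$-symmetrization and declare it ``bookkeeping,'' invoking $\Sigma_m$-equivariance of the product MCK decomposition. That equivariance is indeed used in the paper (it is the content of sublemma \ref{sub}, where $\Gamma_\sigma \circ \Pi_k^{S^m} = \Pi_k^{S^m}\circ\Gamma_\sigma$), but it is not the real obstacle. The real obstacle is the ``Rest'' term in equation (\ref{XS}): Vial's construction of $\Pi_k^X$ is
\[ \Pi_k^X = \tfrac{1}{m}\,{}^t\Phi\circ\Pi_k^{S^m}\circ\Phi + \text{Rest}, \]
where ``Rest'' comes from the lower strata of the de Cataldo--Migliorini decomposition, i.e.\ from partial diagonals. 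These terms are genuinely nonzero as correspondences and there is no known way to absorb them into a clean intertwining identity. The paper's proof therefore restricts attention to $T(\cdot) = A^{2m}(\cdot)$ and $T(\cdot) = A^2_{hom}(\cdot)$, precisely the two groups on which (a) the ``Rest'' term acts as zero for dimension reasons, and (b) $\tfrac{1}{m}{}^t\Phi\circ\Phi$ acts as the identity. Both facts are essential and neither holds on a general $A^i(\cdot)$; this is why the lemma is stated only for $R(\cdot) = A^{2m}_{(j)}(\cdot)$ or $A^2_{(2)}(\cdot)$ rather than for all bigraded pieces. Your one-line conclusion would otherwise prove the lemma for every $(i,j)$, which the paper explicitly declines to claim. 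To repair the argument you would need to replace the hoped-for intertwining identity by the relation (\ref{XS}) together with the two dimension-theoretic observations above, which is exactly what the paper does.
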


\begin{proof} We first prove the statement for ${}^t \Phi$. By construction of the MCK decomposition for $X$, there is a relation
  \begin{equation}\label{XS}  \Pi_k^X= {1\over m}\ {}^t \Phi\circ \Pi_k^{S^m}\circ \Phi + \hbox{Rest}\ \ \ \hbox{in}\ A^{2m}(X\times X)\ , \ \ \ (k=0,2,4,\ldots,4m)\ ,\end{equation}
  where $\{\Pi_k^{S^m}\}$ is a product MCK decomposition for $S^m$, and  ``Rest'' is a term coming from various partial diagonals. For dimension reasons, the term ``Rest''  does not act on $A^{2m}(X)$ and on $A^2_{AJ}(X)$.
Since ${1\over m}\ {}^t \Phi\circ \Phi$ is the identity on $A^{2m}(X)$ and on $A^2_{hom}(X)=A^2_{AJ}(X)$, we can write
  \[ ({}^t \Phi)_\ast (\Pi_k^{S^m})_\ast = ({}^t \Phi \circ\Pi_k^{S^m})_\ast = ({1\over m}\ {}^t \Phi \circ \Phi \circ {}^t \Phi\circ\Pi_k^{S^m})_\ast\colon\ \ \ T(S^m)\ 
  \to\ T(X)\ ,\]
  where $T()$ is either $A^{2m}()$ or $A^2_{hom}()$.
  In view of sublemma \ref{sub} below, this implies
   \[ ({}^t \Phi)_\ast (\Pi_k^{S^m})_\ast =  ({1\over m}\ {}^t \Phi \circ \Pi_k^{S^m} \circ \Phi \circ {}^t \Phi)_\ast\colon\ \ \ T(S^m)\ \to\ T(X)\ .\]  
   But then, plugging in relation (\ref{XS}), we find 
   \[ ({}^t \Phi)_\ast (\Pi_k^{S^m})_\ast T(S^m)\ \subset\ (\Pi_k^X)_\ast T(X)\ .\]
  Taking $k=2$ and $T=A^2_{hom}()$, this proves
    \[ ({}^t \Phi)_\ast  A^2_{(2)}(S^m)\ \subset\  A^2_{(2)}(X)\ .\]
    Taking $k=4m-j$ and $T=A^{2m}()$, this proves
    \[ ({}^t \Phi)_\ast  A^{2m}_{(j)}(S^m)\ \subset\ A^{2m}_{(j)}(X)\ .\]
    
    The proof of the first statement of lemma \ref{compat} is similar: equality (\ref{XS}) implies that
    \[ \Phi_\ast (\Pi^X_k)_\ast ={1\over m}\bigl(  \Phi\circ  {}^t \Phi \circ \Pi_k^{S^m}\circ \Phi \bigr){}_\ast\colon\ \ \ T(X)\ \to\ T(S^m)\ .\]
    Using sublemma \ref{sub}, this slinks down to
    \[  \begin{split}    \Phi_\ast (\Pi^X_k)_\ast &={1\over m}\bigl(     \Pi_k^{S^m}\circ  \Phi\circ  {}^t \Phi\circ \Phi\bigr){}_\ast\\
                                                                    &= (\Pi_k^{S^m}\circ \Phi)_\ast\ \colon\ \ \ \ \ \ \ T(X)\ \to\ T(S^m)\ .\\
                                                                 \end{split}\]
                                                            This proves the first statement of lemma \ref{compat}.

   \begin{sublemma}\label{sub} There is commutativity
     \[ \bigl(\Phi\circ {}^t \Phi\circ \Pi_k^{S^m}\bigr){}_\ast =    \bigl(\Pi_k^{S^m}\circ\Phi\circ {}^t \Phi\bigr){}_\ast\ \ \ A^{i}(S^m)\ \to\  A^i(S^m)\ \ \ \forall i\ ,\ \ \ \forall k\ .\]
     \end{sublemma}
     
   To prove the sublemma, we remark that $h_\ast h^\ast=m\ide\colon A^i(S^{(m)})\to A^i(S^{(m)})$, and so
    \begin{equation}\label{seq} (\Phi\circ {}^t \Phi)_\ast = m\ g^\ast g_\ast = m (\sum_{\sigma\in\Sy_m} \Gamma_\sigma )_\ast\colon\ \ \   A^i(S^{m})\to A^i(S^{m})\ ,\end{equation}
    where the symmetric group $\Sy_m$ acts in the natural way on the product $S^m$. But $\{\Pi_k^{S^m}\}$, being a product decomposition, is symmetric and hence
    \[ \Gamma_\sigma\circ \Pi_k^{S^m} \circ \Gamma_{\sigma^{-1}}= (\sigma\times\sigma)^\ast \Pi_k^{S^m} =\Pi_k^{S^m}\ \ \ \hbox{in}\ A^{2m}(S^m\times S^m)\ \ \ \forall \sigma\in\Sy_m\ ,\ \ \ \forall k\ .\]
    This implies commutativity
    \[ \Gamma_\sigma\circ \Pi_k^{S^m} =  \Pi_k^{S^m}\circ \Gamma_\sigma\ \ \ \hbox{in}\ A^{2m}(S^m\times S^m)\ \ \ \forall \sigma\in\Sy_m\ ,\ \ \ \forall k\ .\]
  Combining with equation (\ref{seq}), this proves the sublemma.
     \end{proof}

 \begin{remark} Lemma \ref{compat} is probably true for any $(i,j)$ (i.e., the correspondence $\Phi$ should be ``of pure grade $0$'' in the language of \cite[Definition 1.1]{SV2}). I have not been able to prove this.
  \end{remark}

\subsection{Relative MCK for $S^m$}

\begin{notation} Let $\Ss\to B$ be a family (i.e., a smooth projective morphism). For $r\in\NN$, we write $\Ss^{r/B}$ for the relative $r$--fold fibre product
  \[ \Ss^{r/B}:= \Ss\times_B \Ss\times_B \cdots \times_B \Ss \ \]
  ($r$ copies of $\Ss$).
  \end{notation}

\begin{proposition}\label{prod} Let $\Ss\to B$ be a family of $K3$ surfaces. There exist relative correspondences 
   \[  \Pi_j^{\Ss^{m/B}}\ \ \in A^{2m}(\Ss^{m/B}\times \Ss^{m/B})\ \ \  (j=0,2,4,\ldots, 4m)\ ,\]
  such that for each $b\in B$, the restriction
  \[ \Pi_j^{(S_b)^m} := \Pi_j^{\Ss^{m/B}}\vert_{(S_b)^{2m}}\ \ \ \in A^4((S_b)^m\times (S_b)^m)\]
  defines a self--dual MCK decomposition for $(S_b)^m$.
  \end{proposition}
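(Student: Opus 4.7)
My plan is to build the relative projectors as K\"unneth products of relative Chow--K\"unneth projectors of $\Ss/B$, constructed from a family version of the Beauville--Voisin class, and then to verify the MCK property \emph{fiber-wise}, invoking \cite[Example~8.17, Theorem~8.6]{SV} on each $S_b$ rather than trying to prove anything in the relative Chow group.

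First I would construct a relative Beauville--Voisin class
\[ o_{\Ss/B}\ :=\ \tfrac{1}{24}\,c_2(T_{\Ss/B})\ \in\ A^2(\Ss)\ . \]
By smoothness of $\Ss\to B$ one has $T_{\Ss/B}\vert_{S_b}=T_{S_b}$, so $o_{\Ss/B}$ restricts on each fiber to the Beauville--Voisin class $o_{S_b}\in A^2(S_b)$ (recall $c_2(T_{S_b})=24\,o_{S_b}$). Writing $p_1,p_2\colon \Ss\times_B\Ss\to\Ss$ for the projections and $\Delta_{\Ss/B}$ for the relative diagonal, set
\[ \pi_0^{\Ss/B}:=p_1^\ast(o_{\Ss/B}),\quad \pi_4^{\Ss/B}:=p_2^\ast(o_{\Ss/B}),\quad \pi_2^{\Ss/B}:=\Delta_{\Ss/B}-\pi_0^{\Ss/B}-\pi_4^{\Ss/B}\ . \]
These are relative correspondences whose restriction to the fiber $S_b\times S_b$ coincides with the Beauville--Voisin self-dual MCK projectors of $S_b$.

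Next I would take K\"unneth products: define
\[ \Pi_j^{\Ss^{m/B}}\ :=\ \sum_{i_1+\cdots+i_m=j}\ \sigma_\ast\bigl(\pi_{i_1}^{\Ss/B}\times_B\cdots\times_B\pi_{i_m}^{\Ss/B}\bigr)\ , \]
where $\sigma$ is the obvious permutation of the $2m$ factors that identifies the external relative product with $\Ss^{m/B}\times_B\Ss^{m/B}$ (push forward to $\Ss^{m/B}\times\Ss^{m/B}$ if that is the intended ambient). Fiber-wise compatibility of flat pullback, relative diagonals, and external products gives
\[ \Pi_j^{\Ss^{m/B}}\vert_{(S_b)^m\times(S_b)^m}\ =\ \sum_{i_1+\cdots+i_m=j}\pi_{i_1}^{S_b}\otimes\cdots\otimes\pi_{i_m}^{S_b}\ , \]
which is precisely the product MCK of $(S_b)^m$ built from Beauville--Voisin on each factor. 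By \cite[Example~8.17]{SV} this is an MCK on each $S_b$, and by \cite[Theorem~8.6]{SV} the product decomposition is itself MCK on $(S_b)^m$. Self-duality is immediate from ${}^t\pi_0^{\Ss/B}=\pi_4^{\Ss/B}$ and ${}^t\pi_2^{\Ss/B}=\pi_2^{\Ss/B}$.

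The only real issue is bookkeeping: checking that the relative operations used (flat pullback along the $p_i$, inclusion of $\Delta_{\Ss/B}$, external fiber product over $B$) all commute with restriction to a point $b\in B$. This is a standard consequence of smoothness of $\Ss\to B$ and the base-change formulas in \cite[Chapter~6]{F}, so poses no real difficulty. Crucially, I would \emph{not} attempt to verify idempotence, orthogonality, or multiplicativity of the $\pi_i^{\Ss/B}$ or of $\Pi_j^{\Ss^{m/B}}$ in the relative Chow group: the proposition only asserts these properties fiber-wise, where they are supplied by the cited theorems.
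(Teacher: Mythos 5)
Your construction is essentially identical to the paper's: the paper also uses the relative Beauville--Voisin class $\tfrac{1}{24}c_2(T_{\Ss/B})$ to build fiberwise CK projectors $\Pi_0^\Ss,\Pi_2^\Ss,\Pi_4^\Ss$ on $\Ss\times_B\Ss$, then forms the relative product projectors via pullback along the various $p_{i,m+i}$, and verifies the MCK property purely on fibers by citing \cite[Example~8.17]{SV} and the stability of MCK under products. The route, the key inputs, and the deliberate avoidance of any relative idempotence/orthogonality check all match.
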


\begin{proof}

 On any $K3$ surface $S_b$, there is the distinguished $0$--cycle ${\mathfrak o}_{S_b}$ such that $c_2(S_b)=24 {\mathfrak o}_{S_b}$ \cite{BV}. Let $p_i\colon \Ss^{m/B}\to \Ss$, $i=1,\ldots,m$, denote the projections to the two factors. Let $T_{\Ss/B}$ denote the relative tangent bundle.
The assignment
  \[ \begin{split} \Pi_0^\Ss &:= (p_1)^\ast \bigl({1\over 24} c_2(T_{\Ss/B})\bigr) \ \ \ A^2(\Ss\times_B \Ss)\ ,\\
                         \Pi_4^\Ss &:= (p_2)^\ast \bigl({1\over 24} c_2(T_{\Ss/B})\bigr) \ \ \ A^2(\Ss\times_B \Ss)\ ,\\
                         \Pi_2^\Ss &:= \Delta_\Ss - \Pi_0^\Ss - \Pi_4^\Ss\\
                    \end{split}\]
          defines (by restriction) an MCK decomposition for each fibre, i.e.
          \[  \Pi_j^{S_b}:= \Pi_j^\Ss\vert_{S_b\times S_b}\ \ \ \in A^2(S_b\times S_b)\ \ \ (j=0,2,4) \]
          is an MCK decomposition for any $b\in B$ \cite[Example 8.17]{SV}.
          
  Next, we consider the $m$--fold relative fibre product $\Ss^{m/B}$. Let
    \[ p_{i,j}\colon \Ss^{2m/B}\ \to\ \Ss^{2/B} \ \ \ (1\le i<j\le 2m)\]
    denote projection to the $i$-th and $j$-th factor. We define
    \[  \begin{split}  \Pi_j^{\Ss^{m/B}} := {\displaystyle \sum_{k_1+k_2+\cdots+k_m=j}}  (p_{1,m+1})^\ast ( \Pi_{k_1}^{\Ss})\cdot (p_{2,m+2})^\ast (\Pi_{k_2}^\Ss)\cdot\ldots\cdot   
                                                                                       (p_{m,2m})^\ast ( \Pi_{k_m}^{\Ss})&\\           \ \ \ \in A^{2m}(\Ss^{4m/B})\ ,\ \ \ 
    (j=0,2,4,\ldots,4m)\ &.\\
    \end{split}\]
    By construction, the restriction to each fibre induces an MCK decomposition (the ``product MCK decomposition'')
    \[ \begin{split} \Pi_j^{(S_b)^m} :=  \Pi_j^{\Ss^{m/B}}\vert_{(S_b)^{2m}} = {\displaystyle \sum_{k_1+k_2+\cdots+k_m=j}}  \Pi_{k_1}^{S_b}\times \Pi_{k_2}^{S_b}\times\cdots
        \times \Pi_{k_m}^{S_b}\ \ \ \in A^{2m}((S_b)^{4m})\ ,&\\
        \ \ \ (j=0,2,4,\ldots,4m)\ .&\\
        \end{split}\]
    \end{proof}
    
    \begin{proposition}\label{prod2} Let $\Ss\to B$ be a family of $K3$ surfaces. There exist relative correspondences
    \[  \Theta_1\ ,\ldots,\ \Theta_m\in A^{2m}(\Ss^{m/B}\times_B \Ss)\ ,\ \ \ \Xi_1\ ,\ldots, \ \Xi_m\in  A^{2}(\Ss\times_B \Ss^{m/B})  \]
    such that for each $b\in B$, the composition
    \[  \begin{split}   A^{2m}_{(2)}\bigl((S_b)^m\bigr)\ \xrightarrow{((\Theta_1\vert_{(S_b)^{m+1}})_\ast,\ldots, (\Theta_m\vert_{(S_b)^{m+1}})_\ast)}\
             A^2(S_b)\oplus \cdots \oplus A^2(S_b)&\\
             \ \ \ \ \ \ \xrightarrow{((\Xi_1+\ldots+\Xi_m)\vert_{(S_b)^{m+1}})_\ast}\ A^{2m}\bigl((S_b)^m\bigr)&\\
             \end{split} \]
      is the identity.
    \end{proposition}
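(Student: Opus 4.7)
The plan is to take for $\Theta_i$ the relative graph of the $i$-th projection $p_i\colon \Ss^{m/B}\to\Ss$, and for $\Xi_i$ a relative partial diagonal decorated with the distinguished zero-cycle $\mathfrak o_{S_b}=\tfrac{1}{24}c_2(T_{S_b})$ in the remaining factors. (Dimensionally the composition of the proposition forces $\Theta_i\in A^2(\Ss^{m/B}\times_B\Ss)$ and $\Xi_i\in A^{2m}(\Ss\times_B\Ss^{m/B})$, which I read as the intended codimensions.) Let $\Delta_i^{\Ss}\subset\Ss\times_B\Ss^{m/B}$ denote the relative partial diagonal $\{(s,s_1,\ldots,s_m):s=s_i\}$ and $q_j\colon\Ss\times_B\Ss^{m/B}\to\Ss$ the projection onto the $j$-th factor of $\Ss^{m/B}$. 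Set
\[ \Theta_i := \Gamma_{p_i},\qquad \Xi_i := [\Delta_i^{\Ss}]\cdot\prod_{j\neq i}q_j^\ast\bigl(\tfrac{1}{24}c_2(T_{\Ss/B})\bigr). \]

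On each fibre $S=S_b$ the crux is a decomposability claim: any class $a\in A^{2m}_{(2)}(S^m)$ admits a presentation
\[ a=\sum_{k=1}^m a_k,\qquad a_k:=\mathfrak o_S^{\otimes(k-1)}\times b_k\times\mathfrak o_S^{\otimes(m-k)},\quad b_k\in A^2_{hom}(S). \]
To see this, note that by proposition \ref{prod}, $a=(\Pi^{S^m}_{4m-2})_\ast a=\sum_{k=1}^m(\Pi_4^{\otimes(k-1)}\otimes\Pi_2\otimes\Pi_4^{\otimes(m-k)})_\ast a$. Each factor $\Pi_4=[S\times\mathfrak o_S]$ acts on the corresponding copy of $S$ by ``push forward and re-insert $\mathfrak o_S$'', while $\Pi_2$ projects the $k$-th factor onto $A^2_{hom}(S)$ — the $\mathfrak o_S\times S$ summand of $\Pi_2=\Delta_S-S\times\mathfrak o_S-\mathfrak o_S\times S$ acts as zero on $A^2(S)$ because $A^4(S)=0$. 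Iterating the $\Pi_4^{(j)}$ for $j\neq k$ produces $a_k$ of the claimed form, with $b_k=(p_k)_\ast a-\deg(a)\,\mathfrak o_S\in A^2_{hom}(S)$.

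Granting the decomposition, the identity follows by direct inspection. By Lieberman's formula $(\Theta_i|_{(S_b)^{m+1}})_\ast a_k=(p_i)_\ast a_k$, which equals $b_i$ when $k=i$ and equals $\deg(b_k)\cdot\mathfrak o_S=0$ when $k\neq i$ (as $b_k\in A^2_{hom}$); hence $(\Theta_i|)_\ast a=b_i$. By construction $(\Xi_i|_{(S_b)^{m+1}})_\ast b_i=\mathfrak o_S^{\otimes(i-1)}\times b_i\times\mathfrak o_S^{\otimes(m-i)}=a_i$, and summing yields $\sum_i(\Xi_i|)_\ast(\Theta_i|)_\ast a=\sum_i a_i=a$, which is the required identity on $A^{2m}_{(2)}(S^m)$.

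The main delicate step is the decomposability claim, for which one cannot get away with an abstract MCK projector: the argument uses the explicit shape $\Pi_4=[S\times\mathfrak o_S]$ of proposition \ref{prod} together with the Beauville--Voisin vanishing $A^4(S)=0$. The remaining ingredients — projection graphs, partial diagonals, and the relative cycle class $\tfrac{1}{24}c_2(T_{\Ss/B})$ — are all manifestly defined in families over $B$, so the fibrewise identity lifts automatically to the global relative correspondences $\Theta_i$ and $\Xi_i$.
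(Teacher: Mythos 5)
Your proof is correct, and it takes a genuinely different route from the paper's. Rather than expressing the product Chow--K\"unneth projector $\Pi^{(S_b)^m}_{4m-2}$ directly as a sum of compositions ${}^t\Gamma_{p_i}\circ\Pi_2^{S_b}\circ\Gamma_{p_i}\circ(\hbox{cycle involving }\Delta\hbox{ and }c_2)$ and reading off $\Theta_i,\Xi_i$ from that factorization (as the paper does), you pick much simpler correspondences --- $\Theta_i=\Gamma_{p_i}$, $\Xi_i=[\Delta_i^{\Ss}]\cdot\prod_{j\ne i}q_j^\ast\bigl(\tfrac1{24}c_2(T_{\Ss/B})\bigr)$ --- and establish what the proposition needs via a decomposability lemma for $A^{2m}_{(2)}\bigl((S_b)^m\bigr)$: every such $0$--cycle is a sum $\sum_k \mathfrak o_S^{(k-1)}\times b_k\times\mathfrak o_S^{(m-k)}$ with $b_k=(p_k)_\ast a-\deg(a)\,\mathfrak o_S\in A^2_{hom}(S)$. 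Your computation of each summand $(\Pi_4^{\otimes(k-1)}\otimes\Pi_2\otimes\Pi_4^{\otimes(m-k)})_\ast a$ is right, the verification $(\Theta_i)_\ast a=b_i$ and $(\Xi_i)_\ast b_i=a_i$ checks out, and the manifestly relative definitions do spread over $B$. Note that $\Xi_i\circ\Theta_i$ is \emph{not} literally $\Pi_2^{(i)}\times\Pi_4\times\cdots\times\Pi_4$ as a cycle --- your $\Xi_i$ omits the $\Pi_2$ insertion --- but it acts the same on $A^{2m}_{(2)}$ since the discrepancy is absorbed by $b_k\in A^2_{hom}$; that is exactly the economy your approach buys. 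Your reading of the intended codimensions is also correct: for a correspondence $\Theta\in A^c\bigl((S_b)^m\times S_b\bigr)$ one has $\Theta_\ast\colon A^i\bigl((S_b)^m\bigr)\to A^{i+c-2m}(S_b)$, so the stated factorization $A^{2m}\to A^2(S_b)\to A^{2m}$ forces $\Theta_i$ to have codimension $2$ and $\Xi_i$ codimension $2m$; the codimensions printed in the proposition (and asserted of the paper's own $\Theta_i,\Xi_i$) are swapped, which you rightly treated as a typo. One small inaccuracy of attribution: the vanishing $A^4(S)=0$ is the trivial fact that a surface has no nonzero Chow groups in codimension greater than two --- what you actually use from Beauville--Voisin is the canonical class $\mathfrak o_S$ with $c_2(S)=24\,\mathfrak o_S$, not the vanishing.
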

    
    \begin{proof} 
    
     As before, let 
    \[ p_{i,j}\colon\ \ \  \Ss^{2m/B}\ \to\ \Ss^{2/B} \ \ \ (1\le i<j\le 2m)\]
    denote projection to the $i$-th and $j$-th factor, and let 
    \[ p_i\colon \ \ \  \Ss^{m/B}\ \to\ \Ss \ \ \  (1\le i\le m) \]
    denote projection to the $i$--th factor.
            
        We now claim that for each $b\in B$, there is equality
       \begin{equation}\label{both} \begin{split} ( \Pi_{4m-2}^{\Ss^{m/B}})\vert_{(S_b)^{2m}} =  {1\over 24^{m-1}}\Bigl(  {}^t \Gamma_{p_{1}}\circ \Pi_2^\Ss\circ \Gamma_{p_{1}}\circ 
               \bigl(   (p_{1,m+1})^\ast (\Delta_\Ss )\cdot \prod_{{2\le j\le 2m}} (p_{j})^\ast c_2(T_{\Ss/B}) &\bigr)\\  
               + \ldots +
                           {}^t \Gamma_{p_{m}}\circ \Pi_2^\Ss\circ \Gamma_{p_{m}}\circ 
               \bigl(   (p_{m,2m})^\ast (\Delta_\Ss )\cdot \prod_{\stackrel{1\le j\le 2m-1}{j\not=m}}(p_{j})^\ast c_2(T_{\Ss/B})    \bigr)   \Bigr)&\vert_{(S_b)^{2m}}\\
               \ \ \ \hbox{in}\ A^{2m}((&S_b)^{m}\times (S_b)^{m})\ .\\
               \end{split}\end{equation}
         Indeed, using Lieberman's lemma \cite[16.1.1]{F}, we find that
         \[ \begin{split} ( {}^t \Gamma_{p_{1}}\circ &\Pi_2^\Ss\circ \Gamma_{p_{1}})\vert_{(S_b)^{2m}} = \bigl(({}^t \Gamma_{p_{1,m+1}})_\ast 
         (\Pi_2^{\Ss})\bigr)\vert_{(S_b)^{2m}} =
           \bigl((p_{1,m+1})^\ast (\Pi_2^{\Ss})\bigr)\vert_{(S_b)^{2m}}\ ,\\
                           &\vdots\\
           ( {}^t \Gamma_{p_{m}}\circ &\Pi_2^\Ss\circ \Gamma_{p_{m}})\vert_{(S_b)^{2m}} = \bigl(({}^t \Gamma_{p_{m,2m}})_\ast 
         (\Pi_2^{\Ss})\bigr)\vert_{(S_b)^{2m}} =
           \bigl((p_{m,2m})^\ast (\Pi_2^{\Ss})\bigr)\vert_{(S_b)^{2m}}\ .\\
           \end{split}           \]
           
       Let us now (by way of example) consider the first summand of the right--hand--side of (\ref{both}). For brevity, let
        \[ P\colon\ \ \  (S_b)^{3m}\ \to\ (S_b)^{2m} \]
        denote the projection on the first $m$ and last $m$ factors. Writing out the definition of composition of correspondences,
        we find that
       \[ \begin{split}     &{1\over 24^{m-1}}\Bigl(  {}^t \Gamma_{p_{1}}\circ \Pi_2^\Ss\circ \Gamma_{p_{1}}\circ 
               \bigl(   (p_{1,m+1})^\ast (\Delta_\Ss )\cdot \prod_{\stackrel{2\le j\le 2m}{j\not=m+1}} (p_{j})^\ast c_2(T_{\Ss/B}) \bigr)\Bigr)\vert_{(S_b)^{2m}} =\\
                & {1\over 24^{2m-2}}\Bigl(   \bigl((p_{1,m+1})^\ast (\Pi_2^{S_b})\bigr)   \circ 
               \bigl(   (p_{1,m+1})^\ast (\Delta_{S_b} )\cdot \prod_{{m+2\le j\le 2m}} (p_{j})^\ast c_2(T_{S_b}) \bigr)\Bigr) =\\ 
               & P_\ast    \Bigl( \bigl( (\Delta_{S_b})_{(1,m+1)} \times {\mathfrak o}_{S_b} \times\cdots\times{\mathfrak o}_{S_b} \times S_b\times\cdots\times S_b\bigr)\cdot \\
               &\ \ \ \ \ \ \ \ \bigl( S_b\times\cdots\times S_b\times (\Pi_2^{S_b})_{(m+1,2m+1)}\times S_b\times\cdots\times S_b     \bigr)  \Bigr)= \\
               & P_\ast \Bigl(  \bigl((\Delta_{S_b}\times S_b)\cdot (S_b\times\Pi_2^{S_b})\bigr)_{(1,m+1,2m+1)}\times  {\mathfrak o}_{S_b}\times\cdots\times {\mathfrak o}_{S_b}\times S_b\times\cdots\times S_b\Bigr)=\\
               & \Pi_2^{S_b}\times \Pi_4^{S_b}\times\cdots\times \Pi_4^{S_b}\ \ \ \ \ \ \hbox{in}\ A^{2m}\bigl( (S_b)^m\times (S_b)^m\bigr)\ .\\
               \end{split}\]  
               (Here, we use the notation $(C)_{(i, j)}$ to indicate that the cycle $C$ lies in the $i$th and $j$th factor, and likewise for $(D)_{(i,j,k)}$.)           
                          
      Doing the same for the other summands in (\ref{both}), one convinces oneself that both sides of (\ref{both}) are equal to the fibrewise product Chow--K\"unneth component
         \[  \Pi_{4m-2}^{(S_b)^m}=\Pi_2^{S_b}\times \Pi_4^{S_b}\times  \cdots\times\Pi_4^{S_b}  +\cdots  +      \Pi_4^{S_b}\times\cdots\times\Pi_4^{S_b}\times \Pi_2^{S_b}    \ \ \ \in A^{2m}((S_b)^m\times (S_b)^m)\ ,\]
         thus proving the claim.

  Let us now define
      \[ \begin{split}
           \Theta_i&:={1\over 24^{m-1}} \, \Gamma_{p_{i}}\circ 
               \bigl(   (p_{i,m+i})^\ast (\Delta_\Ss )\cdot \prod_{\stackrel{j\in [m+2,2m]}{ j\not\in\{i,m+i\}}} (p_{j})^\ast c_2(T_{\Ss/B})    \bigr)\ \ \ \in A^{2m}((\Ss^{m/B})\times_B \Ss)\ ,\\  
                 \Xi_i&:= {}^t \Gamma_{p_{i}}\circ \Pi_2^\Ss\ \ \ \ \ \ \in A^2(\Ss\times_B (\Ss^{m/B})) \ ,\\
                       \end{split}\]
                       where $1\le i\le m$.
    It follows from equation (\ref{both}) that there is equality 
      \begin{equation}\label{transp} \begin{split} \Bigl( (\Xi_1\circ \Theta_1 + \cdots +\Xi_m\circ \Theta_m)\vert_{(S_b)^{2m}}   \Bigr){}_\ast =
      \bigl(\Pi_{4m-2}^{(S_b)^m}\bigr){}_\ast\colon &\\
           \ \ A^{i}_{(j)}\bigl((S_b)^m\bigr)\ \to\ A^{i}_{(j)}\bigl((S_b)^m\bigr)&\ \ \ \forall b\in B\ \ \ \forall (i,j)\ .\\
      \end{split}\end{equation}      
      Taking $(i,j)=(2m,2)$, this proves the proposition.

         \end{proof}

 The following is a version of proposition \ref{prod2} for the group $A^2_{(2)}((S_b)^m)$:     
  
 \begin{proposition}\label{prod3} Let $\Ss\to B$ be a family of $K3$ surfaces. There exist relative correspondences
    \[  \Theta^\prime_1\ ,\ldots,\ \Theta^\prime_m\in A^{2m}(\Ss\times_B (\Ss^{m/B}))\ ,\ \ \ \Xi^\prime_1\ ,\ldots, \ \Xi^\prime_m\in  A^{2}( (\Ss^{m/B})\times_B \Ss)  \]
    such that for each $b\in B$, the composition
    \[  \begin{split}   A^{2}_{(2)}\bigl((S_b)^m\bigr)\ \xrightarrow{((\Xi^\prime_1\vert_{(S_b)^{m+1}})_\ast,\ldots, (\Xi^\prime_m\vert_{(S_b)^{m+1}})_\ast)}\
             A^2(S_b)\oplus \cdots \oplus A^2(S_b)&\\
             \ \ \ \ \ \ \xrightarrow{((\Theta^\prime_1+\ldots+\Theta^\prime_m)\vert_{(S_b)^{m+1}})_\ast}\ A^{2}\bigl((S_b)^m\bigr)&\\
             \end{split} \]
      is the identity.
  \end{proposition}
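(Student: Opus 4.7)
The plan is to derive Proposition \ref{prod3} from the preceding Proposition \ref{prod2} by transposition. First I would set
\[ \Xi'_i := {}^t\Xi_i, \qquad \Theta'_i := {}^t\Theta_i, \]
where $\Xi_i, \Theta_i$ are the relative correspondences provided by Proposition \ref{prod2}. Transposition is just the swap of the two factors in a fibre product, which preserves codimension, so these cycles land in the ambient spaces with the codimensions stipulated in the statement.

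Next I would use the anti-commutativity of transposition with composition, ${}^t(\beta\circ\alpha)={}^t\alpha\circ{}^t\beta$, to obtain
\[ \Theta'_i\circ\Xi'_i \ =\ {}^t\Theta_i\circ{}^t\Xi_i\ =\ {}^t(\Xi_i\circ\Theta_i)\ . \]
From equation (\ref{both}) in the proof of Proposition \ref{prod2} (and the construction of $\Xi_i, \Theta_i$ as the natural pieces of the decomposition appearing there), the sum $\sum_{i=1}^m \Xi_i\circ\Theta_i$ restricts on the fibre over $b\in B$ to the Chow--K\"unneth component $\Pi_{4m-2}^{(S_b)^m}$ of the product MCK of Proposition \ref{prod}. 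Transposing this identity yields
\[ \sum_{i=1}^m (\Theta'_i\circ\Xi'_i)\vert_{(S_b)^{2m}}\ =\ {}^t\bigl(\Pi_{4m-2}^{(S_b)^m}\bigr)\ \ \ \hbox{in}\ A^{2m}((S_b)^m\times (S_b)^m)\ . \]

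Finally I would verify the self-duality ${}^t\Pi_k^{(S_b)^m}=\Pi_{4m-k}^{(S_b)^m}$ of the product MCK. On a single K3 factor one has ${}^t\Pi_2^{S_b}=\Pi_2^{S_b}$, while ${}^t\Pi_0^{S_b}=\Pi_4^{S_b}$ is immediate from the explicit formulas $\Pi_0^{S_b}={\mathfrak o}_{S_b}\times S_b$ and $\Pi_4^{S_b}=S_b\times {\mathfrak o}_{S_b}$; since transposition distributes over exterior products, the product MCK inherits the self-duality. In particular ${}^t\Pi_{4m-2}^{(S_b)^m}=\Pi_2^{(S_b)^m}$, so that $\sum_i \Theta'_i\circ\Xi'_i=\Pi_2^{(S_b)^m}$ on each fibre, and this projector acts as the identity on $A^2_{(2)}((S_b)^m)=(\Pi_2^{(S_b)^m})_\ast A^2((S_b)^m)$ by the very definition of the bigrading. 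This gives the proposition. The argument is essentially formal: the substantive K\"unneth--intersection calculation was already performed in Proposition \ref{prod2}, and the main thing to check---that transposition commutes with restriction to fibres and with relative composition---is a standard feature of the formalism of relative correspondences.
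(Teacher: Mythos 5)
Your proposal is correct and essentially coincides with the paper's proof: both take $\Theta'_i={}^t\Theta_i$, $\Xi'_i={}^t\Xi_i$, transpose the relation $\sum_i\Xi_i\circ\Theta_i\vert_b=\Pi_{4m-2}^{(S_b)^m}$, and invoke the self-duality ${}^t\Pi_{4m-2}^{(S_b)^m}=\Pi_2^{(S_b)^m}$ of the product MCK decomposition (which the paper simply cites as holding ``by construction'', while you spell out the elementary verification).
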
 
  
  \begin{proof} One may take
    \[ \begin{split}   \Theta^\prime_i&:= {}^t \Theta_i\ \ \ \in\ A^{2m}(\Ss\times_B (\Ss^{m/B}))\ ,\\
                             \Xi^\prime_i&:= {}^t \Xi_i   \ \ \ A^2((\Ss^{m/B})\times_B \Ss)\ \ \ \ (i=1,\ldots,m)\ .\\
                    \end{split}\]
       By construction, the product MCK decomposition $\{ \Pi_i^{(S_b)^m}\}$ satisfies
          \[ \Pi_2^{(S_b)^m} = {}^t \bigl(\Pi_{4m-2}^{(S_b)^m}\bigr)\ \ \ \hbox{in}\ A^{2m}\bigl( (S_b)^m\times (S_b)^m\bigr)\ .\]   
     Hence, the transpose of equation (\ref{transp}) gives the equality
        \[    \begin{split}    \bigl( \Pi_2^{(S_b)^m} \bigr){}_\ast = \bigl( {}^t (\Pi_{4m-2}^{(S_b)^m})\bigr){}_\ast = \bigl( {}^t \Theta_1\circ {}^t \Xi_1+\ldots+{}^t \Theta_m\circ {}^t \Xi_m\bigr){}_\ast\colon&\\\ \  \ A^{i}_{(j)}\bigl((S_b)^m\bigr)\ \to\ A^{i}_{(j)}\bigl((S_b)^m\bigr)\ \ &\ \forall b\in B\ \ \ \forall (i,j)\ .\\
        \end{split} \]
        Taking $(i,j)=(2,2)$, this proves the proposition.
         \end{proof}

 \subsection{Spread}
 \label{ssvois}
 
    \begin{lemma}[Voisin \cite{V0}, \cite{V1}]\label{projbundle} Let $M$ be a smooth projective variety of dimension $n+r$, and let $L_1,\ldots,L_r$ be very ample line bundles on $M$. Let 
    \[  \XX\to B\]
   denote the universal family of codimension $r$ smooth complete intersections $X_b\subset M$ of type
   \[ X_b=M\cap D_1\cap\cdots \cap D_r\ ,\ \ \ D_i\in \vert L_i\vert\ ,\ \ i=1,\ldots r\ .\]
   (That is,
      \[  B\subset\vert L_1\vert\ \times \cdots \times \vert L_r\vert  \] 
      is a Zariski open.)
      Let
   \[   p\colon \wt{\XX\times_B \XX}\ \to\ \XX\times_B \XX\]
   denote the blow--up of the relative diagonal. 
 Then $\wt{\XX\times_B \XX}$ is Zariski open in $V$, where $V$ is a fibre bundle over $\wt{M\times M}$, the blow--up of $M\times M$ along the diagonal, and the fibres of $V\to\wt{M\times M}$ are products of projective spaces.
   \end{lemma}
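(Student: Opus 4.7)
The plan is to exhibit $V$ concretely by realizing $\wt{\XX\times_B\XX}$ as the parameter space of $(p, D_1, \ldots, D_r)$, where $p$ is a length--2 subscheme of $M$ and $D_i\in\vert L_i\vert$ is a divisor containing $p$. First, I would extend the natural projection $\XX\times_B\XX\to M\times M$ to a morphism $\wt{\XX\times_B\XX}\to\wt{M\times M}$: the original map sends the relative diagonal $\Delta_{\XX/B}$ into $\Delta_{M\times M}$, so the universal property of blow--ups yields the lift.

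Next I would construct the fibre bundle $V$. To each point $p\in\wt{M\times M}$ one associates a length--2 subscheme $Z_p\subset M$: if $p=(x,y)$ with $x\ne y$ then $Z_p=\{x,y\}$; if $p=(x,[v])$ lies on the exceptional divisor $E$, then $Z_p$ is the length--2 subscheme supported at $x$ in direction $v$. These fit together into a flat family $Z\subset\wt{M\times M}\times M$. For each $i$, set
\[F_i:=(\mathrm{pr}_1)_\ast\bigl((\mathrm{pr}_2)^\ast L_i\otimes\OO_Z\bigr)\ ,\]
a rank--2 vector bundle on $\wt{M\times M}$. Very ampleness of $L_i$ guarantees that the evaluation map
\[H^0(M,L_i)\otimes\OO_{\wt{M\times M}}\ \twoheadrightarrow\ F_i\]
is surjective; let $K_i$ denote its kernel. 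Put $V_i:=\PP(K_i)\subset\wt{M\times M}\times\vert L_i\vert$, a projective bundle of relative dimension $\dim\vert L_i\vert-2$ whose fibre over $p$ parametrizes the $D_i\in\vert L_i\vert$ containing $Z_p$. Finally, set
\[V:=V_1\times_{\wt{M\times M}}\cdots\times_{\wt{M\times M}}V_r\ ,\]
which is a fibre bundle over $\wt{M\times M}$ with fibres products of projective spaces.

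Third, I would produce the map $\wt{\XX\times_B\XX}\to V$ over $\wt{M\times M}$. Off the exceptional divisor, a point $(x,y,(D_1,\ldots,D_r))$ of $\XX\times_B\XX$ with $x\ne y$ maps to $((x,y),(D_1,\ldots,D_r))$, which lies in $V$ because $x,y\in D_i$. On the exceptional divisor of $\wt{\XX\times_B\XX}$, which by definition is $\PP(T_{\XX/B})$ over $\Delta_{\XX/B}\cong\XX$, a point $(x,[v],(D_1,\ldots,D_r))$ with $[v]\in\PP(T_xX_b)$ maps to $((x,[v]),(D_1,\ldots,D_r))$; this is well--defined because $T_xX_b=\bigcap_i\ker(ds_i)_x$, so $v$ is annihilated by $(ds_i)_x$ and the section defining $D_i$ vanishes on $Z_{(x,[v])}$.

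Fourth, I would check this is an open immersion. A dimension count gives $\dim\wt{\XX\times_B\XX}=\sum_i\dim\vert L_i\vert+2n=\dim V$ (with $n=\dim M-r$); both varieties are smooth; and the map is set--theoretically injective. The image is the open locus in $V$ parametrizing those $(p,D_1,\ldots,D_r)$ for which $M\cap D_1\cap\cdots\cap D_r$ is smooth at the support of $Z_p$, which is Zariski open. The main subtle point is to rigorously upgrade this set--theoretic bijection onto an open subset to an \emph{open immersion}; I would do this by a local coordinate computation along the exceptional divisor, identifying the normal bundle of $\Delta_{\XX/B}$ in $\XX\times_B\XX$ with the pull--back of the relevant tautological subbundle on $V$, so that the map becomes étale, hence an open immersion between smooth varieties of equal dimension.
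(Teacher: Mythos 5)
Your proposal is correct and takes essentially the same route as the paper: the paper's proof is a brief sketch deferring to Voisin, defining $V$ as the incidence variety of (length--$2$ subscheme $z$ associated to a point of $\wt{M\times M}$, tuple of divisors vanishing on $z$) and invoking very ampleness to get the projective--bundle structure. You have simply spelled out the details of that same construction (the flat family $Z$, the rank--$2$ evaluation sheaves $F_i$, the kernel bundles $K_i$, the fibre product $V=\PP(K_1)\times_{\wt{M\times M}}\cdots\times_{\wt{M\times M}}\PP(K_r)$, and the identification of $\wt{\XX\times_B\XX}$ with the open locus where the complete intersection is smooth at the support of $z$).
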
 
  
  \begin{proof} This is \cite[Proof of Proposition 3.13]{V0} or \cite[Lemma 1.3]{V1}. The idea is to define $V$ as
   \[  V:=\Bigl\{ \bigl((x,y,z),\sigma\bigr) \ \vert\ \sigma\vert_z=0\Bigr\}\ \ \subset\ \wt{M\times M}\times \vert L\vert\ .\]
   The very ampleness assumption ensures that $V\to\wt{M\times M}$ is a projective bundle.
    \end{proof}

  This is used in the following key proposition: 
   
 \begin{proposition}[Voisin \cite{V1}]\label{propvois} Let $M$ be a smooth projective variety of dimension $n+r$, 
 and suppose that
   \[ A^\ast_{hom}(M)=0\ .\]
Let $L_1,\ldots,L_r$ be very ample line bundles on $M$, and let 
  $ \XX\ \to\ B$ be as in lemma \ref{projbundle}.

    Assume $\Gamma\in A^n(\XX\times_B \XX)$ is such that the restriction
   \[ \Gamma_b:= \Gamma\vert_{X_b\times X_b}\ \ \ \in\ A^n(X_b\times X_b) \]
   is homologically trivial, for very general $b\in B$. Then there exists $\delta\in A^n(M\times M)$ such that
   \[ \Gamma_b + \delta_b=0\ \ \ \hbox{in}\ A^n(X_b\times X_b)\ \ \ \forall b\in B\ .\]
  \end{proposition}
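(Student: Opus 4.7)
The strategy is Voisin's spread technique, with Lemma \ref{projbundle} as the key structural input.

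First I would pull back $\Gamma$ via the blow-up $p\colon \wt{\XX\times_B\XX}\to\XX\times_B\XX$ to get $p^\ast\Gamma\in A^n(\wt{\XX\times_B\XX})$. Since $\wt{\XX\times_B\XX}$ is Zariski open in $V$, the Chow localization exact sequence produces a (non-canonical) extension $\tilde\Gamma\in A^n(V)$ with $\tilde\Gamma\vert_{\wt{\XX\times_B\XX}} = p^\ast\Gamma$. Applying the projective bundle formula to $\pi\colon V\to\wt{M\times M}$, with relative tautological classes $h_1,\ldots,h_r\in A^1(V)$ (one per projective factor), I would decompose
\[
\tilde\Gamma \;=\; \sum_{I=(i_1,\ldots,i_r)} \pi^\ast(\alpha_I)\cdot h_1^{i_1}\cdots h_r^{i_r}\quad\hbox{in}\ A^n(V),
\]
with uniquely determined coefficients $\alpha_I\in A^{n-|I|}(\wt{M\times M})$.

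Next I would restrict to the fibers of the projection $V\to B$ sending $((x,y,z),\sigma)\mapsto\sigma$. For $b\in B$, the fiber is canonically $\wt{X_b\times X_b}$, embedded as $\sigma_b\colon\wt{X_b\times X_b}\hookrightarrow V$, and $\sigma_b^\ast\tilde\Gamma = p^\ast\Gamma_b$. The crucial observation is that $\sigma_b^\ast h_i = 0$: each $h_i$ is the pullback from $|L_i|$ of the hyperplane class (via $V\subset\wt{M\times M}\times|L|\to|L_i|$), and the fiber $V_b$ maps to the single point $b\in|L_i|$. All terms with $I\neq 0$ therefore drop out, yielding
\[
p^\ast\Gamma_b \;=\; \pi\vert_{V_b}^{\,\ast}(\alpha_0)\quad\hbox{in}\ A^n(\wt{X_b\times X_b}),
\]
where $\pi\vert_{V_b}\colon\wt{X_b\times X_b}\hookrightarrow\wt{M\times M}$ is the strict transform inclusion and $\alpha_0\in A^n(\wt{M\times M})$ does not depend on $b$.

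Finally I would descend $\alpha_0$ to $M\times M$ via the blow-up formula $A^\ast(\wt{M\times M}) = q^\ast A^\ast(M\times M)\oplus(\hbox{classes supported on the exceptional divisor over }\Delta_M)$, with $q\colon\wt{M\times M}\to M\times M$. Writing $\alpha_0 = q^\ast\beta + \alpha_0^E$, the exceptional piece restricts to a class on $\wt{X_b\times X_b}$ supported on its exceptional divisor, and after applying $p_\ast$ yields a cycle on $X_b\times X_b$ supported on $\Delta_{X_b}$; this is the restriction of a universal class on $M\times M$ supported on $\Delta_M$. Assembling gives $\delta\in A^n(M\times M)$ with $\Gamma_b + \delta_b = 0$ in $A^n(X_b\times X_b)$ for all $b\in B$. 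The hypothesis $A^\ast_{hom}(M) = 0$ implies $A^\ast_{hom}(\wt{M\times M}) = 0$ (by K\"unneth and the blow-up formula), which controls the ambiguity in $\tilde\Gamma$ (modulo classes supported on $V\setminus\wt{\XX\times_B\XX}$) and upgrades the ``very general $b$'' input into a Chow identity for all $b\in B$. The main obstacle I expect is the descent of the exceptional contribution in a $b$-independent way: one must check that $p_\ast(\alpha_0^E\vert_{\wt{X_b\times X_b}})$ really is the restriction to $X_b\times X_b$ of a single class on $M\times M$ supported on $\Delta_M$, which requires tracking the compatibility of blow-up and restriction through the square relating $\wt{X_b\times X_b}$, $X_b\times X_b$, $\wt{M\times M}$, and $M\times M$.
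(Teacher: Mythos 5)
Your approach differs from the paper's. The paper's proof (following Voisin) first uses a Leray spectral sequence argument \cite[Lemmas 3.11 and 3.12]{V0} to establish that $\Gamma+\delta\vert_{\XX\times_B\XX}$ vanishes in the \emph{cohomology} of the total space, then invokes lemma~\ref{projbundle} together with $A^\ast_{hom}(M)=0$ to show $A^2_{hom}(\XX\times_B\XX)=0$, so that the cohomological vanishing is automatically a Chow vanishing, and finally restricts to fibers. You bypass the cohomological step and attempt to go directly to a Chow-level decomposition via the projective bundle formula on $V$.

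There are two genuine gaps in your version. First, and most seriously, the hypothesis that $\Gamma_b$ is homologically trivial for very general $b$ plays no effective role in your argument. Without it the statement is false: take $\Gamma=\Delta_{\XX/B}$, the relative diagonal. Your chain of deductions (extend $p^\ast\Gamma$ to $\tilde\Gamma\in A^n(V)$, decompose via the projective bundle formula, restrict to $V_b$ where $h_i\vert_{V_b}=0$) goes through unchanged for this $\Gamma$, yet $\Delta_{X_b}$ is certainly not of the form $-\delta\vert_{X_b\times X_b}$ with $\delta\in A^n(M\times M)$ for interesting $X_b$. What your computation actually produces, after descending through the blow-up, is an identity of the shape $\Gamma_b=\beta\vert_{X_b\times X_b}+c[\Delta_{X_b}]$ with $\beta\in A^n(M\times M)$ and $c\in\QQ$ independent of $b$; it is precisely the homological triviality of $\Gamma_b$ (applied for very general $b$, where $[\Delta_{X_b}]$ lies outside the image of $H^{2n}(M\times M)$) that forces $c=0$. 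You never make this deduction. Second, the claim that the pushforward of the exceptional piece $\alpha_0^E\vert_{\wt{X_b\times X_b}}$ ``is the restriction of a universal class on $M\times M$ supported on $\Delta_M$'' is not correct as stated: a codimension-$n$ cycle on $M\times M$ supported on $\Delta_M$ has dimension $n+2r$, while $\Delta_M$ has dimension $n+r$, so for $r\ge 1$ no nonzero such class exists. Rather than being automatically a restriction of something on $M\times M$, the exceptional contribution has to be shown to vanish, and this is exactly where the homological input must be fed in. Your closing sentence gestures at both points but does not actually carry them out.
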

 
 \begin{proof} This follows from \cite[Proposition 1.6]{V1}. (NB: The result \cite[Proposition 1.6]{V1} is stated only for hypersurfaces, i.e. $r=1$. However, as noted in \cite[Remark 0.7]{V1}, the complete intersection case follows from this.)
 
 In the special case $n=2$ (which is the only case we will need in this note), proposition \ref{propvois} is already contained in \cite{V0}. Indeed, the Leray spectral sequence argument \cite[Lemmas 3.11 and 3.12]{V0} gives the existence of $\delta\in A^2(M\times M)$ such that (after shrinking the base $B$)
   \[  \Gamma + \delta\vert_{\XX\times_B \XX}=0\ \ \ \hbox{in}\ H^4(\XX\times_B \XX)\ .\]
   But using lemma \ref{projbundle} (plus some basic properties of varieties with trivial Chow groups, cf. \cite[Section 3.1]{V0}), one finds that
    \[ A^2_{hom}(\XX\times_B \XX)=0\ . \]
   Therefore, we must have
   \[  \Gamma + \delta\vert_{\XX\times_B \XX}=0\ \ \ \hbox{in}\ A^2(\XX\times_B \XX)\ .\]
   In particular, this implies that
    \[ \Gamma_b + \delta_b=0\ \ \ \hbox{in}\ A^n(X_b\times X_b)\ \ \ \hbox{for\ general\  $b\in B$}\ .\]
    To obtain the result for {\em all\/} $b\in B$, one can invoke \cite[Lemma 3.2]{Vo}.
    \end{proof}

\subsection{Mukai models}

\begin{theorem}[Mukai \cite{Muk}]\label{muk} Let $S$ be a general $K3$ surface of degree $10$ (i.e. genus $g(S)=6$). Let $G=G(2,5)$ denote the Grassmannian of lines in $\PP^4$. Then $S$ is isomorphic to the zero locus of a section of $\OO_G(1)^{\oplus 3}\oplus \OO_G(2)$.
 \end{theorem}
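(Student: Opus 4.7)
This is a classical result of Mukai, so the plan is to outline Mukai's construction, which proceeds via an explicit rank--$2$ vector bundle on $S$.

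First I would verify that the numerics match. The Grassmannian $G=G(2,5)\subset\PP^9$ has dimension $6$, degree $5$, Picard group $\ZZ\cdot H$ with $H=\OO_G(1)$ the Pl\"ucker class, and canonical bundle $K_G=-5H$. If $X\subset G$ is the smooth complete intersection of three sections of $\OO_G(1)$ and one section of $\OO_G(2)$, then by adjunction
\[ K_X=\bigl(K_G+3H+2H\bigr)\vert_X=0\ , \]
and the degree (with respect to $H\vert_X$) is
\[ H^2\cdot\bigl(3H+2H\bigr)\hbox{ on\ }G\hbox{'s\ Chow\ ring}=5\cdot 1\cdot 1\cdot 1\cdot 2=10\ . \]
A Lefschetz--type argument applied to the sequence of hyperplane and quadric sections in $G$ shows $X$ is simply connected, so $X$ is a $K3$ surface of degree $10$, i.e.\ genus $6$.

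Next I would go in the other direction (this is the substantive part). Let $S$ be a general $K3$ surface with polarization $L$ of degree $L^2=10$; then $h^0(L)=7$ and $|L|$ embeds $S$ into $\PP^6$. The main input from Mukai is the existence and rigidity of a stable rank--$2$ vector bundle $E$ on $S$ with
\[ c_1(E)=L\ ,\qquad c_2(E)=4\ , \]
provided by the fact that the Mukai vector $(2,L,2)$ is primitive isotropic, so the moduli space $M_S(2,L,4)$ is a reduced point for generic $S$. A Riemann--Roch / vanishing computation shows $h^0(E)=5$ and $h^i(E)=0$ for $i>0$, and that $E$ is globally generated. The evaluation map $H^0(E)\otimes\OO_S\to E$ then yields a morphism
\[ \varphi\colon\ S\ \to\ G\bigl(2,H^0(E)^\vee\bigr)\cong G(2,5) \]
with $\varphi^\ast\OO_G(1)=\det E=L$.

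The main obstacle is to verify that $\varphi$ is a closed embedding whose image is cut out by the expected sections. I would show $\varphi$ is an embedding using the usual stability + very--ampleness criteria for the induced map to $G$. For the ideal--theoretic statement, I would consider the restriction map
\[ H^0\bigl(G,\OO_G(k)\bigr)\ \to\ H^0\bigl(S,L^{\otimes k}\bigr) \]
and compute both sides by Riemann--Roch (on $S$) and the Borel--Weil/Pl\"ucker description (on $G$): one finds that in degree $1$ the cokernel has dimension $3$ (producing the three linear sections) and in degree $2$ the remaining cokernel accounts for exactly one quadric section, while higher degrees are forced by the Koszul resolution associated to such a regular section of $\OO_G(1)^{\oplus 3}\oplus\OO_G(2)$. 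A codimension/transversality argument then shows $\varphi(S)$ coincides scheme--theoretically with the zero locus of this section.

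Finally, I would match dimensions: the parameter space of sections of $\OO_G(1)^{\oplus 3}\oplus\OO_G(2)$ up to the action of $\aut(G)=PGL_5$ and scalars is $19$--dimensional, equal to $\dim\MM_{K3,10}$, so the above construction realizes a Zariski dense family in the moduli of polarized $K3$'s of degree $10$. For full details I would refer to Mukai's original paper \cite{Muk}.
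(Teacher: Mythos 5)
The paper offers no proof of this result --- it is stated as a classical theorem with the attribution to Mukai's paper \cite{Muk} and used as a black box to produce the universal family of degree--$10$ $K3$'s in remark \ref{family}. Your sketch therefore goes further than the paper does, and the overall outline (adjunction/degree check in $G(2,5)$, then the converse direction via a rigid stable rank--$2$ Mukai bundle $E$ on a general $(S,L)$, then the analysis of the restriction maps $H^0(G,\OO_G(k))\to H^0(S,L^{\otimes k})$) is indeed the shape of Mukai's argument.

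There are, however, a couple of concrete slips in the substantive step. The Mukai vector you want is $v(E)=(2,L,3)$, not $(2,L,2)$: on a $K3$ one has $v(E)=(r,c_1,\tfrac{c_1^2}{2}-c_2+r)$, so $r=2$, $c_1=L$, $c_2=4$, $L^2=10$ gives $(2,L,3)$. More importantly, this vector is \emph{not} isotropic: $\langle v,v\rangle = c_1^2 - 2rs = 10-12 = -2$. It is precisely the condition $\langle v,v\rangle=-2$ (an ``exceptional'' or ``$(-2)$--vector''), not isotropy, that forces $\dim M_S(v)=\langle v,v\rangle+2 = 0$ and hence gives the rigid bundle; an isotropic $v$ would give a $2$--dimensional moduli space (another $K3$), which is not what is needed here. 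Also, in the restriction map $H^0(G,\OO_G(1))\to H^0(S,L)$ (from a $10$--dimensional space onto a $7$--dimensional one), it is the \emph{kernel}, not the cokernel, that is $3$--dimensional and produces the three linear sections. With those corrections your sketch is a faithful account of Mukai's proof, which is exactly what the paper implicitly invokes.
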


\begin{remark}\label{family} Let
  \[ \Ss\ \subset\ G\times B \]
   denote the universal family of smooth codimension $3$ complete intersections defined by $\OO_G(1)^{\oplus 3}\oplus \OO_G(2)$, where
  \[ B\ \subset\ \PP H^0\bigl(G,\OO_G(1)\bigr)^{\times 3}\times \PP H^0\bigl(G,\OO_G(2)\bigr) \]  
  is the Zariski open parametrizing smooth surfaces $S_b\subset G$.
  We will refer to the family
  \[ \Ss\ \to\ B \]
  as the {\em universal family of degree $10$ $K3$ surfaces\/}.
\end{remark}

 \subsection{EPW cubes}
 \label{sscube}
 
 \begin{definition}[Iliev--Kapustka--Kapustka--Ranestad \cite{IKKR}]\label{def} 
 Let $W$ be a complex vector space of dimension $6$ equipped with a skew--symmetric form 
   \[ \nu\colon\ \ \ \wedge^3 W \times \wedge^3 W\ \to\ \C\ .\]
   Let $LG_\nu$ denote the variety of $10$--dimensional subspaces in $\wedge^3 W$ that are Lagrangian with respect to $\nu$. For any $3$--dimensional subspace
   $U\in G(3,W)$, the $10$--dimensional subspace
     \[ T_U := \wedge^2 U \wedge W\ \subset\ \wedge^3 W \]
     is in $LG_\nu$.
     
     Given $A\in LG_\nu$ and $k\in\NN$, define the degenerary locus
     \[ D_k^A := \bigl\{  U \in G(3,W)\ \vert\ \dim (A\cap T_U)\ge k\bigr\}\ \ \subset\ G(3,W)\ .\]
     The scheme $D_2^A$ is called an {\em EPW cube\/}. For $A$ generic, the EPW cube $D_2^A$ is of dimension $6$, and $\hbox{Sing}(D_2^A)=D_3^A$ is a smooth threefold.
     \end{definition}

 \begin{theorem}[Iliev--Kapustka--Kapustka--Ranestad \cite{IKKR}]\label{ikkr} Notation as in definition \ref{def}.
 
 \noindent
 (\rom1)
 There is a Zariski open $LG_\nu^1\subset LG_\nu$ with the following property: for any $A\in LG_\nu^1$, there exists a double cover
   \[ Y_A\ \to\ D_2^A \]
   branched along $D_3^A$, and $Y_A$ is a hyperk\"ahler variety.
   
   \noindent
   (\rom2)
   There is a divisor $\Delta^1\subset LG_\nu^1$ such that for general $A\in \Delta^1$, the variety $Y_A$ is birational to the Hilbert scheme $(S_A)^{[3]}$ for some degree $10$ $K3$ surface $S_A$.
   
  \noindent
  (\rom3) Given a generic degree $10$ $K3$ surface $S$, there exists $A\in\Delta^1$ such that $S=S_A$. 
    \end{theorem}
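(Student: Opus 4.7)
The plan is to follow the three-part structure of the statement, essentially reconstructing the strategy of Iliev--Kapustka--Kapustka--Ranestad.

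\emph{Step 1 (the double cover and hyperk\"ahler property).} On $G(3,W)$, the assignment $U\mapsto T_U=\wedge^2 U\wedge W$ defines a rank-$10$ subbundle $\TT \subset \wedge^3 W \otimes \OO_G$, and composing the constant inclusion $A\otimes\OO_G \hookrightarrow \wedge^3 W\otimes \OO_G$ with the quotient $\wedge^3W\otimes\OO_G \twoheadrightarrow (\wedge^3W\otimes\OO_G)/\TT$ produces a morphism $\varphi_A$ of rank-$10$ bundles whose $k$-th degeneracy locus is exactly $D_k^A$. Following O'Grady's construction for EPW sextics and its IKKR extension, I would build from $\varphi_A$ a natural reflexive $\OO_{D_2^A}$-sheaf $\EE_A$ of generic rank $2$, equipped with an $\OO_{D_2^A}$-algebra structure via the Lagrangian form $\nu$ (which identifies the cokernel of $\varphi_A^\vee$ with the dual of the cokernel of $\varphi_A$). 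The relative Spec of this algebra gives $Y_A\to D_2^A$, branched along $D_3^A$. The Zariski open $LG_\nu^1$ is then carved out by the genericity conditions guaranteeing that $D_2^A$ is irreducible of the expected dimension $6$, $D_3^A$ is smooth of dimension $3$, and $Y_A$ is smooth with trivial $K_{Y_A}$ and $h^{2,0}(Y_A)=1$. The hyperk\"ahler conclusion would be secured by exhibiting one specific $A$ for which $Y_A$ is birational to a Hilbert cube (produced in Step 2) and then invoking deformation invariance of the hyperk\"ahler property.

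\emph{Step 2 (the birational model over $\Delta^1$).} Given a generic degree-$10$ K3 surface $S$, theorem \ref{muk} embeds $S\subset G(2,5)$ as a complete intersection with $\OO_G(1)^{\oplus 3}\oplus\OO_G(2)$. From this Mukai data I would construct a Lagrangian $A_S\in LG_\nu$: take $W$ to be a distinguished $6$-dimensional space built from $H^0(S,L)$ (with $L$ the polarization), and define $A_S\subset\wedge^3 W$ from the resolution of $\OO_S$ on $G(2,5)$ so that the incidence condition $\dim(T_U\cap A_S)\ge 2$ parametrizes length-three subschemes of $S$ sitting inside lines of $\PP^4$. This gives a natural rational map
\[ D_2^{A_S}\ \dashrightarrow\ S^{(3)} \]
sending $U$ to the length-$3$ subscheme cut out by $T_U\cap A_S$; combined with the double cover $Y_{A_S}\to D_2^{A_S}$ and the Hilbert--Chow morphism $S^{[3]}\to S^{(3)}$, this lifts to a birational map $Y_{A_S}\dashrightarrow S^{[3]}$, with birationality checked by computing degrees on $H^{2,0}$ and on a transcendental lattice. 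Defining $\Delta^1\subset LG_\nu^1$ as the closure of $\{A_S\}$, the count $\dim LG_\nu=20$ versus $\dim \MM_{K3,10}=19$ forces $\Delta^1$ to have codimension one.

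\emph{Step 3 (surjectivity).} Part (iii) is the statement that $S\mapsto A_S$ is dominant onto $\Delta^1$. Since both sides are irreducible of dimension $19$, I would conclude either by a direct tangent-space comparison (deformations of the polarized K3 on the one hand, infinitesimal motions of $A_S$ inside $LG_\nu$ on the other) or by reversing the construction of Step 2, starting from a general $A\in\Delta^1$ and extracting $S_A$ from the locus of $U$ where $T_U\cap A$ acquires extra intersection.

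The main obstacle is Step 2: producing the correct Lagrangian $A_S$ attached to a degree-$10$ K3 and rigorously verifying that the resulting map $Y_{A_S}\dashrightarrow S^{[3]}$ is birational (rather than just dominant, or dominant of higher degree). This needs a careful analysis of how the incidence $\dim(T_U\cap A_S)\ge 2$ encodes length-three subschemes on $S$, and in particular that the deck transformation of $Y_{A_S}\to D_2^{A_S}$ constructed sheaf-theoretically in Step 1 matches the geometric involution exchanging the two sheets that one observes before Hilbert--Chow contraction. Step 1 is a formal sheaf-theoretic construction modelled on O'Grady's EPW-sextic argument, and Step 3 is essentially a dimension/tangent-space computation, so both become tractable once the geometry of Step 2 is in place.
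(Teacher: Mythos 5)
This theorem is not proved in the paper at all: it is an imported result, and the paper's ``proof'' consists of pointers to \cite{IKKR} (Theorem 1.1 for (\rom1), Section 5 for (\rom2)) and to O'Grady \cite{OG} (Proposition 4.14, for (\rom3)). Your proposal instead tries to reconstruct the IKKR argument from scratch, and while the broad architecture you describe (degeneracy loci of the bundle map $A\otimes\OO_G\to(\wedge^3W\otimes\OO_G)/\TT$, an O'Grady--style double cover, a distinguished divisor of Lagrangians where $Y_A$ becomes birational to $S^{[3]}$, then deformation invariance of the hyperk\"ahler property) does match the actual strategy of \cite{IKKR}, what you have written is a plan with the decisive steps left open, not a proof.

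Two gaps are genuine and not merely expository. First, in Step 1 the O'Grady recipe does not transfer verbatim: for EPW sextics one takes $\operatorname{Spec}(\OO\oplus R)$ with $R=\operatorname{coker}(\varphi_A)$ of generic rank $1$ on $D_1^A$, but on $D_2^A\setminus D_3^A$ the cokernel of $\varphi_A$ has rank $2$, so ``relative Spec of the cokernel algebra'' does not produce a degree-$2$ cover; one needs a different mechanism (in \cite{IKKR} this is a separate construction for higher Lagrangian degeneracy loci, essentially extracting a rank-$1$ self-dual object such as a determinant of the rank-$2$ kernel bundle and proving it squares to $\OO$), and this is exactly the kind of point that can fail, so it cannot be waved through as ``formal''. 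Second, you define $\Delta^1$ as the closure of the image of $S\mapsto A_S$, which makes (\rom3) essentially tautological and empties (\rom2) of content unless you also give an intrinsic characterization of $\Delta^1$ inside $LG_\nu^1$ and prove the two descriptions agree; the dimension count ($19+\dim PGL(W)=54=\dim LG_\nu-1$) only bounds the dimension of your locus and presupposes finiteness of the fibres of $S\mapsto[A_S]$ modulo $PGL(W)$, which is part of what O'Grady's Proposition 4.14 actually establishes. Step 2 itself --- producing $A_S$ from the Mukai model and verifying that $Y_{A_S}\dashrightarrow S^{[3]}$ is birational rather than merely dominant --- you explicitly defer, and it is the heart of the theorem. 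Since the paper treats this as a black-box citation, the honest conclusion is that your proposal is a reasonable reading guide to \cite{IKKR} but does not constitute an independent proof.
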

    
  \begin{proof} Point (\rom1) is contained in \cite[Theorem 1.1]{IKKR}. 
  
  Point (\rom2) is \cite[Section 5]{IKKR}. (NB: the divisor that we denote $\Delta^1$ is written as
  $\Delta\setminus (\Gamma\cup \Sigma)$ in \cite{IKKR}.)

  For point (\rom3), we note that the construction of $S_A$ for general $A\in\Delta$ in \cite[Section 4]{IKKR} is modelled on O'Grady's construction in \cite[Section 4.1]{OG}; point (\rom3) thus follows from O'Grady's result \cite[Proposition 4.14]{OG}.
  \end{proof}  
    
  \begin{remark}\label{anti} As noted in \cite{IKKR}, a noteworthy consequence of theorem \ref{ikkr}(\rom2) is that double EPW cubes $Y_A$ are of $K_3^{[3]}$ type. 
  
  Theorem \ref{ikkr}(\rom3) implies that the covering involution
    \[ \iota_A\colon\ \ \ Y_A\ \to\ Y_A \]
    is anti--symplectic: indeed (as noted in \cite{IKKR}), if it were symplectic the fixed--locus would be a symplectic subvariety, whereas the fixed--locus of $\iota_A$ is the inverse image of $D_3^A$ which is of dimension $3$.
  
  Theorem \ref{ikkr}(\rom3) implies that if $S$ is a generic degree $10$ $K3$ surface, there exists an anti--symplectic birational involution
    \[ \iota\colon\ \ \ S^{[3]}\ \dashrightarrow\ S^{[3]}\ .\]
    I do not know whether there is a geometric interpretation of the involution $\iota$, similar to the geometric interpretation of the birational involution
    \[   \iota\colon\ \ \ S^{[2]}\ \dashrightarrow\ S^{[2]}\ \]
    related to double EPW sextics given in \cite[Section 4.3]{OG0}.
    \end{remark}
    
  

We now translate some of the results of \cite{IKKR} into statements that will be convenient for the purposes of this note:

\begin{proposition}\label{families} Let $\Delta^1\subset LG^1_\nu$ be the divisor of theorem \ref{ikkr}.  Let $\TT\to \MM_6$ be the universal genus $6$ $K3$ surface over the moduli space $\MM_6$. 

\noindent
(\rom1) There exist projective morphisms
  \[ \XX_{\Delta^1}\ \xrightarrow{\rho}\  \DD_{\Delta^1}  \ \xrightarrow{\pi}\ \Delta^1\ ,\]
  such that for each $A\in\Delta^1$, the fibre $X_A:= (\pi\circ\rho)^{-1}(A)$ is a double EPW cube, and the fibre $D_A:=\pi^{-1}(A) $ is an EPW cube.
  
 \noindent
 (\rom2) Let $\TT\to \MM_6$ be the universal genus $6$ $K3$ surface over the moduli space $\MM_6$, and let $\TT^{[3]}\to\MM_6$ denote the universal Hilbert cube. 
There exist Zariski opens $\Delta^{1,0}\subset \Delta^1$, and $\MM^0_6\subset \MM_6$, and a generically $2:1$ rational map
  \[ \Psi\colon\ \ \  \TT^{[3]/\MM^0_6}\ \dashrightarrow\  \EE  \ .\]
  Here, $\TT^{[3]/\MM_6^0}:= (\TT^{[3]})\times_{\MM_6} \MM^0_6$, and $\EE$ is the quotient stack
   \[ \EE:= \DD_{\Delta^{1,0}}/P\ ,\]
   where $\DD_{\Delta^{1,0}}:=\DD_{\Delta^1}\times_{\Delta^1} \Delta^{1,0}$, and 
   $P:=PGL(W)$ acts on $\Delta^{1,0}$ and on $G=G(3,W)$. The map $\Psi$ fits into a diagram
    \[ \begin{array}[c]{ccccccccc}
      \Ss^{3/B^0} &\stackrel{}{\dashrightarrow}&\ \ \ \ \ \  \TT^{[3]/\MM_6^0}& &&& \XX_{\Delta^{1,0}}&\hookrightarrow& \XX\\
      &&& \ \ \ \ \ \ \searrow{\scriptstyle \Psi}& &&\downarrow&&\downarrow\\    
      \downarrow &  & \downarrow &  &            \EE := \DD_{\Delta^{1,0}}/P \ \  & \leftarrow     & \DD_{\Delta^{1,0}} & \hookrightarrow  &\DD \\
       &&  && \downarrow && \downarrow&&\downarrow\\
      B^0 & \to& \MM_6^0 & \stackrel{f}{\to} & \MM_{\Delta^{1,0}}=\Delta^{1,0}/P & \leftarrow & \Delta^{1,0}  & \hookrightarrow &LG^1_\nu     \\
      \end{array} \]
      
      Here, $\MM_{\Delta^{1,0}}$ is the image of $\Delta^{1,0}$ under the period map to the moduli space, and $\MM_{\Delta^{1,0}}$ is a geometric quotient
        \[ \MM_{\Delta^{1,0}}= \Delta^{1,0}/P\ .\]
        The morphism $f$ is an isomorphism.
      (And $\Ss\to B$ is the universal family of remark \ref{family}, and $B^0\subset B$ is a Zariski open).
      
    \noindent
    (\rom3) The quotient stack $\EE$ is a  Deligne--Mumford stack, and so
      \[ A^i(\EE)\cong A^i_P(\DD_{\Delta^{1,0}})\ ,\]
      where the right--hand side denotes equivariant Chow groups, in the sense of Edidin--Graham \cite{EG}.
    \end{proposition}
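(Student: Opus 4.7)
For part (\rom1), the plan is to globalize the constructions of definition \ref{def} and theorem \ref{ikkr}(\rom1) over the base $\Delta^1$. The degeneracy locus $D_2^A \subset G(3,W)$ is cut out by a rank condition on a morphism of vector bundles (the comparison map between the trivial bundle with fibre $A$ and the bundle $U \mapsto T_U$); this rank condition varies algebraically with $A\in \Delta^1$, so the incidence variety inside $\Delta^1 \times G(3,W)$ is closed, and projection to $\Delta^1$ defines $\DD_{\Delta^1} \to \Delta^1$ with the correct fibres. The double cover construction of \cite[Proposition 2.10]{IKKR} is obtained from a sheaf-of-algebras on $G(3,W)$ whose sheaf structure depends algebraically on $A$; relative Spec of this sheaf over $\Delta^1$ gives $\XX_{\Delta^1} \to \DD_{\Delta^1}$.

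For part (\rom2), the map $\Psi$ is assembled from theorem \ref{ikkr}(\rom2) and (\rom3). By (\rom3), after shrinking $\MM_6$ to a suitable dense open $\MM_6^0$, one has a well--defined map $S \mapsto [A(S)]$ sending a generic degree $10$ $K3$ surface $S$ to the $P$--orbit of an $A \in \Delta^1$ such that $S = S_A$; this defines the morphism $f \colon \MM_6^0 \to \Delta^{1,0}/P$ once $\Delta^{1,0}$ is chosen so that the $P$--orbits there parametrize a dense open in $\MM_6$. By theorem \ref{ikkr}(\rom2), for $A\in\Delta^{1,0}$ there is a birational map $(S_A)^{[3]} \dashrightarrow X_A$, and composing with the covering $X_A \to D_A$ (which is $2{:}1$) and then with the quotient $\DD_{\Delta^{1,0}} \to \EE$ yields the generically $2{:}1$ rational map $\Psi$. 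Commutativity of the diagram is then built in by construction, the two bottom squares meeting at $\MM_{\Delta^{1,0}}$ via $f$, which is an isomorphism onto its image because the moduli interpretation of both sides is the same (a generic $K3$ surface of genus $6$ recovers $A$ up to $P$--action and vice versa).

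For part (\rom3), the quotient stack $\EE = \DD_{\Delta^{1,0}}/P$ is Deligne--Mumford provided the action of $P = PGL(W)$ on $\Delta^{1,0}$ has finite stabilizers, since then the induced action on $\DD_{\Delta^{1,0}}$ (which factors through the action on $\Delta^{1,0}$) also has finite stabilizers. Finiteness of stabilizers follows from the fact that $\MM_{\Delta^{1,0}} = \Delta^{1,0}/P$ is a geometric quotient of the expected dimension $\dim \Delta^{1,0} - \dim P = 19 = \dim \MM_6$: an infinite stabilizer at a generic $A$ would force the orbit dimension to drop and the quotient dimension to be too large. Once $\EE$ is DM, the identification $A^i(\EE) \cong A^i_P(\DD_{\Delta^{1,0}})$ is the standard comparison between the (operational) Chow ring of a DM quotient stack and equivariant Chow in the sense of Edidin--Graham \cite{EG}.

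The main obstacle is bookkeeping: one must choose the open subsets $\Delta^{1,0}, \MM_6^0, B^0$ compatibly so that every rational map in the diagram is defined on a common dense open and $f$ is a genuine isomorphism rather than just a birational map. This is not difficult in principle, since each of the opens used in \cite{IKKR}, \cite{OG}, \cite{Muk} is nonempty and the family constructions of part (\rom1) are relative over $\Delta^1$; the technical work is to verify that the intersection of all the required opens is still dense, which reduces to checking that the irreducibility of $\Delta^1$ from \cite{IKKR} persists through each shrinking step.
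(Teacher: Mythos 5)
Your outlines of parts (\rom1) and (\rom2) are in the same spirit as the paper: for (\rom1), the paper simply cites \cite[Section 5]{IKKR} for the existence of the tower $\XX\to\DD\to LG^1_\nu$ and then restricts over $\Delta^1$, whereas you sketch an internal reconstruction via degeneracy loci and a relative Spec; this is a reasonable expansion of what the cited reference does, though it hides the scheme-theoretic care needed to make $\DD$ and the double cover well-behaved. For (\rom2) you and the paper do essentially the same thing: $f$ comes from O'Grady's correspondence $S\leftrightarrow A$ up to $P$-action, and $\Psi$ is assembled from $\phi_b$ followed by the covering map and the passage to the quotient.

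However, your treatment of part (\rom3) has a genuine gap. To conclude that $\EE=\DD_{\Delta^{1,0}}/P$ is Deligne--Mumford you need \emph{every} stabilizer of the $P$-action on $\DD_{\Delta^{1,0}}$ (equivalently, every stabilizer of a point of $\Delta^{1,0}$, since the stabilizer upstairs sits inside the one downstairs) to be finite, not just the generic one. Your dimension count only gives finiteness generically, and moreover it is circular: the assertion that $\MM_{\Delta^{1,0}}=\Delta^{1,0}/P$ is a geometric quotient of dimension $19$ is part of the package being established, not a prior input from which finiteness of stabilizers can be deduced. The paper instead appeals to the fact that $LG^1_\nu$ (hence $\Delta^{1,0}$) lies inside the stable locus for the $PGL(W)$-action, citing O'Grady \cite{OG06}, which gives finite (in fact trivial, up to finite ambiguity) stabilizers at every point and thereby both the DM property and the fact that the quotient is geometric. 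You should replace your dimension argument with this stability input; once that is in place, your identification $A^i(\EE)\cong A^i_P(\DD_{\Delta^{1,0}})$ via the Gillet/Vistoli/Edidin--Graham comparison is fine and matches the paper.
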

  
  \begin{proof}    
 {}
 
 \noindent
 (\rom1) There exists a tower of projective morphisms
   \[ \XX\ \to\ \DD\ \to\ LG^1_\nu\ ,\]
   where a fibre $D_A$ is an EPW cube, and a fibre $X_A$ is a double EPW cube, and $\XX\to LG^1_\nu$ is smooth \cite[Section 5]{IKKR}. By base change, one obtains
   \[ \XX_{\Delta^1}\ \to\ \DD_{\Delta^1} \ \to\ \Delta^1\ .\]
   
 \noindent
 (\rom2) First, we note that (as proven in \cite{OG}) for a given $A\in\Delta^{1,0}$, the associated $K3$ surface $S_A$ is well--defined up to projectivities, and so there is a map $\Delta^{1,0}\to \MM_6$. Conversely, given a general genus $6$ $K3$ surface $S$, the element $A\in\Delta^1$ such that $S=S_A$ is well--defined up to the action of $P=PGL(W)$. This proves that $f$ is an isomorphism on appropriate opens.
 
 To construct $\EE$, we note that $\DD_{\Delta^{1,0}}$ is defined as
   \[ \DD_{\Delta^{1,0}}:= \bigl\{  (U,A)\ \vert\ U\in D_2^A        \bigr\}\ \ \ \subset\  G\times\Delta^{1,0}  \ ,\]
   and so $P$ acts naturally on $\DD_{\Delta^{1,0}}$. 
   
   The map $\Psi$ is defined by sending a generic point $x\in (S_b)^{[3]}$ to 
   \[     \rho\bigl(   (\phi_b)(x)\bigr)  \ \ \ \in\ D_2^{f(b)}    \ ,\]
   where $\phi_b\colon (S_b)^{[3]}\dashrightarrow X_{f(b)}$ is the birational map of theorem \ref{ikkr}, and $\rho\colon X_{f(b)}\to D_2^{f(b)}$ is the double cover.   

 \noindent
 (\rom3) Let $s\colon \DD_{\Delta^{1,0}}\to \Delta^{1,0}$ denote the projection. The stabilizer of a point $e\in\DD_{\Delta^{1,0}}$ for the action of $P$ is contained in the stabilizer of $s(e)$ for the $P$--action on $\Delta^{1,0}$. This stabilizer is finite, since $\Delta^{1,0}$ is contained in $LG^1_\nu$, which is contained in the stable locus \cite{OG06}. 
 
 The statement about the Chow group of $\EE$ follows from this. (For any Deligne--Mumford stack, Chow groups with rational coefficients have been defined 
 \cite{Gil}, \cite{Vis}. These Chow groups agree with the equivariant Chow groups \cite{EG}.) 
 
 \end{proof}

 \begin{corollary}\label{mck} Let $A\in\Delta^1$ be general, and let $X=X_A$ be the associated double EPW cube. Then $X$ has an MCK decomposition, and the Chow ring of $X$ has a bigrading $A^\ast_{(\ast)}(X)$ with $A^i_{(j)}(X)=0$ if $j>i$ and  $A^i_{(j)}(X)=0$ if $j$ is odd. 
 \end{corollary}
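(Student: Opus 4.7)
The plan is to transfer the MCK structure from the Hilbert cube of the associated $K3$ surface to the double EPW cube via birationality. By Theorem \ref{ikkr}(\rom2), for $A\in\Delta^1$ general the double EPW cube $X_A$ is birational to $(S_A)^{[3]}$, where $S_A$ is a degree $10$ $K3$ surface. Both $X_A$ and $(S_A)^{[3]}$ are hyperk\"ahler sixfolds (as recalled in remark \ref{anti}, a double EPW cube is of $K3^{[3]}$ type).

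First I would apply Theorem \ref{charles} to $(S_A)^{[3]}$: this produces a self-dual MCK decomposition $\{\Pi_i^{(S_A)^{[3]}}\}$, and the resulting bigrading on the Chow ring satisfies
\[ A^i\bigl((S_A)^{[3]}\bigr) = \bigoplus_{j=2i-12}^{i} A^i_{(j)}\bigl((S_A)^{[3]}\bigr)\ , \]
with $A^i_{(j)}((S_A)^{[3]})=0$ for $j$ odd. In particular the two vanishing properties required by the corollary already hold for $(S_A)^{[3]}$: $A^i_{(j)}=0$ whenever $j>i$, and $A^i_{(j)}=0$ whenever $j$ is odd.

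Next I would invoke lemma \ref{hk}: since $X_A$ and $(S_A)^{[3]}$ are birational hyperk\"ahler varieties, Rie{\ss}'s theorem provides an isomorphism of Chow motives
\[ h(X_A)\ \cong\ h\bigl((S_A)^{[3]}\bigr)\ \ \ \hbox{in}\ \MM_{\rm rat} \]
which is compatible with the multiplicative structure (i.e.\ intertwines the small-diagonal multiplication morphisms). Transporting the MCK projectors $\Pi_i^{(S_A)^{[3]}}$ along this isomorphism yields a CK decomposition $\{\Pi_i^{X_A}\}$ of $X_A$ that is again multiplicative. This gives $X_A$ an MCK decomposition, hence a bigraded ring structure $A^\ast_{(\ast)}(X_A)$.

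Finally, because the Rie{\ss} isomorphism is an isomorphism of motives matching the respective Chow--K\"unneth projectors, it identifies the pieces $A^i_{(j)}(X_A)$ with the corresponding pieces $A^i_{(j)}((S_A)^{[3]})$. The vanishing statements $A^i_{(j)}(X_A)=0$ for $j>i$ and for $j$ odd therefore follow directly from the analogous vanishings for $(S_A)^{[3]}$ noted above. The only subtle point is to verify that Rie{\ss}'s isomorphism really preserves the grading index (not only the multiplicative structure), but this is immediate from the fact that it is induced by a correspondence of pure cohomological degree $0$ and respects the K\"unneth components; this is essentially the content of \cite[Lemma 2.13]{EPW}, which is the reference cited in the proof of lemma \ref{hk}.
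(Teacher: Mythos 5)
Your proposal matches the paper's proof: invoke Theorem \ref{ikkr}(\rom2) to obtain the birational model $(S_A)^{[3]}$, apply Theorem \ref{charles} to get an MCK decomposition on the Hilbert cube with the stated vanishings, and transfer it to $X_A$ via Lemma \ref{hk} (Rie{\ss}'s isomorphism of bigraded Chow rings). This is the same argument, correctly carried out.
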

 
 \begin{proof} The variety $X$ is birational to a Hilbert cube $(S_A)^{[3]}$ (theorem \ref{ikkr}(\rom2)). Hilbert cubes of $K3$ surfaces have an MCK decomposition (theorem \ref{charles}). It follows from lemma \ref{hk} that $X$ has an MCK decomposition, and that there is an isomorphism of bigraded rings
   \[  A^\ast_{(\ast)}(X)\cong A^\ast_{(\ast)}((S_A)^{[3]})\ .\] 
   The vanishing $A^i_{(j)}(X)=0$ for $j>i$ and for $j$ odd follows from the corresponding property for $(S_A)^{[3]}$. 
 \end{proof}

 \section{Hard Lefschetz}
 
 In this section, we prove a ``hard Lefschetz type'' isomorphism for Chow groups of certain varieties. This hard Lefschetz result (and in particular, the version for double EPW cubes, corollary \ref{hardepw}) will be a crucial ingredient in the proof of the main result of this note (theorem \ref{main}).
 
 
 \begin{theorem}\label{hard} Let $\Ss\to B$ be the universal family of $K3$ surfaces of degree $10$ (cf. remark \ref{family}). Let $L\in A^1(\Ss^{m/B})$ be a line bundle such that the restriction $L_b$ (to the fibre over $b\in B$) is big for very general $b\in B$.
   Then
     \[  \cdot (L_b)^{2m-2}\colon\ \ \ A^2_{(2)}((S_b)^m)\ \to\ A^{2m}_{(2)}((S_b)^m) \]
     is an isomorphism for all $b\in B$.
       \end{theorem}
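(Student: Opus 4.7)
The plan is a direct fibrewise computation exploiting the orthogonality structure of the product MCK together with the multinomial expansion of $L_b^{2m-2}$. First I would verify that both sides admit canonical orthogonal direct sum decompositions into $m$ copies of $A^2_{(2)}(S_b)$. Using the construction in Proposition \ref{prod}, the Chow--K\"unneth components decompose as $\Pi_k^{(S_b)^m}=\sum \Pi_{k_1}^{S_b}\times\cdots\times\Pi_{k_m}^{S_b}$, and since $\Pi_1^{S_b}=\Pi_3^{S_b}=0$ on the $K3$ surface $S_b$, the only summands surviving in $\Pi_2^{(S_b)^m}$ (respectively $\Pi_{4m-2}^{(S_b)^m}$) have exactly one $k_\ell=2$ with the remaining indices all equal to $0$ (respectively all equal to $4$). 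The surviving sub-projectors are mutually orthogonal (being external products of mutually orthogonal projectors on the factors), yielding
\[ A^2_{(2)}\bigl((S_b)^m\bigr)=\bigoplus_{j=1}^m p_j^\ast A^2_{(2)}(S_b),\qquad A^{2m}_{(2)}\bigl((S_b)^m\bigr)=\bigoplus_{j=1}^m R_j, \]
where $R_j$ is the subspace of classes $o_{S_b}\times\cdots\times a\times\cdots\times o_{S_b}$ with $a\in A^2_{(2)}(S_b)$ placed in the $j$-th factor.

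Second, since $H^1(S_b,\OO_{S_b})=0$, K\"unneth for Picard gives $\Pic((S_b)^m)=\bigoplus_i p_i^\ast\Pic(S_b)$, so I can write $L_b=\sum_i p_i^\ast M_{i,b}$ for some $M_{i,b}\in\Pic(S_b)$. Expanding $L_b^{2m-2}$ multinomially and pairing with $p_j^\ast a$ for $a\in A^2_{(2)}(S_b)=A^2_{hom}(S_b)$, the $j$-th external factor becomes $a\cdot M_{j,b}^{k_j}\in A^{2+k_j}(S_b)$, which vanishes for $k_j\geq 1$ since $\dim S_b=2$; likewise $M_{i,b}^{k_i}\in A^{k_i}(S_b)$ vanishes for $k_i\geq 3$. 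The constraint $\sum_i k_i=2m-2$ then forces the unique surviving multi-index $k_j=0$ and $k_i=2$ for $i\neq j$. Applying the Beauville--Voisin relation $M_{i,b}^2=(M_{i,b}^2)\cdot o_{S_b}$ in $A^2(S_b)$ \cite{BV} gives
\[ L_b^{2m-2}\cdot p_j^\ast a=\frac{(2m-2)!}{2^{m-1}}\Bigl(\prod_{i\neq j}(M_{i,b}^2)\Bigr)\bigl(o_{S_b}\times\cdots\times a\times\cdots\times o_{S_b}\bigr)\in R_j, \]
which shows the multiplication map is diagonal with scalar coefficient $c_j(b)=\frac{(2m-2)!}{2^{m-1}}\prod_{i\neq j}(M_{i,b}^2)$ on the $j$-th summand.

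Finally, I would show $c_j(b)\neq 0$ for every $b\in B$. Each self-intersection $(M_{i,b}^2)$ is a deformation invariant, hence independent of $b$. For very general $b$ one has $\Pic(S_b)=\ZZ\cdot H$ with $H$ an ample generator and $H^2>0$; writing $M_{i,b}=a_iH$, bigness of $L_b=\sum_i a_i p_i^\ast H$ forces every $a_i>0$, because $h^0(kL_b)=\prod_i h^0(ka_iH)$ grows like $k^{2m}$ only then. Thus $(M_{i,b}^2)=a_i^2H^2>0$ for every $b$, and hence $c_j(b)\neq 0$. The map is therefore an isomorphism of direct sums for all $b\in B$. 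The main technical point is the orthogonal direct-sum refinement of the product Chow--K\"unneth projectors; once this is combined with Beauville--Voisin, the result reduces to elementary multinomial bookkeeping, and no Voisin-style spread argument is needed, as the relevant intersection numbers are locally constant on $B$.
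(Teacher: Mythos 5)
Your proof is correct and takes a genuinely different, much more elementary route than the paper's. The paper attacks the statement with Voisin's spread technique: it invokes L.~Fu's hard Lefschetz result on coniveau-quotiented cohomology fibrewise, uses a Hilbert-schemes argument to globalize the resulting correspondences $C_b$ over $B$, reduces (via Propositions \ref{prod2}--\ref{prod3}) to relative correspondences in $A^2(\Ss\times_B\Ss)$, and finally deploys Proposition \ref{propvois} to kill the fibrewise homologically-trivial part. Your approach instead exploits the specific structure of the product $(S_b)^m$ directly: the product MCK projectors $\Pi_2^{(S_b)^m}$ and $\Pi_{4m-2}^{(S_b)^m}$ split into $m$ mutually orthogonal pieces (since $\Pi_1^{S_b}=\Pi_3^{S_b}=0$), identifying both $A^2_{(2)}((S_b)^m)$ and $A^{2m}_{(2)}((S_b)^m)$ with $m$ copies of $A^2_{(2)}(S_b)$; the K\"unneth splitting $\Pic((S_b)^m)=\bigoplus p_i^\ast\Pic(S_b)$ (from $h^{1,0}=0$) and the multinomial expansion, together with dimension-vanishing and the Beauville--Voisin relation $M^2=(M^2)\, o_{S_b}$, then exhibit $\cdot L_b^{2m-2}$ as a diagonal map with scalar entries $c_j(b)=\tfrac{(2m-2)!}{2^{m-1}}\prod_{i\neq j}(M_{i,b}^2)$; and the bigness hypothesis for very general $b$ combined with deformation invariance of the intersection numbers gives $c_j(b)>0$ for all $b$. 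This buys you a self-contained, purely computational proof that entirely bypasses the spread machinery, the Lefschetz-standard-conjecture input, and Proposition \ref{propvois}. The trade-off is that your argument is tied to the self-product $S^m$ and the explicit splitting of its Picard group, whereas the paper's spread argument is the same template that then powers the proof of the main theorem (Theorem \ref{main}), where the relevant correspondence (coming from the anti-symplectic involution) is not explicit and no such direct computation is available. (The transfer to $S^{[m]}$ and then to double EPW cubes is identical in both approaches, via Corollaries \ref{hardhilb} and \ref{hardepw}.) It is worth noting explicitly in your write-up that the image of the sub-projector $\Pi_4^{S_b}\times\cdots\times\Pi_2^{S_b}\times\cdots\times\Pi_4^{S_b}$ on $A^{2m}$ is exactly $R_j$ and no larger, which follows from identifying it motivically with $A^2(h^2(S_b)\otimes\LLL^{2(m-1)})\cong A^2_{(2)}(S_b)$; this is the one place where the orthogonal-direct-sum claim requires a touch more than a formal observation.
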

      
   \begin{proof} This is proven using the technique of spread as developed by Voisin \cite{V0}, \cite{V1}.
   Let us write
    \[ \Gamma_{L^{2m-2}}:=  (p_1)^\ast(L^{2m-2})\cdot \Delta_{\Ss^{m/B}}\ \ \ \in A^{4m-2}\bigl( (\Ss^{m/B})\times_B (\Ss^{m/B})\bigr)\ ,\]
    where 
     \[ \Delta_{\Ss^{m/B}}\ \subset\    (\Ss^{m/B})\times_B (\Ss^{m/B})\] 
     is the relative diagonal, and
     \[ p_1\colon\ \ \  (\Ss^{m/B})\times_B (\Ss^{m/B})\ \to\ \Ss^{m/B} \]
     is projection on the first factor. The relative correspondence $ \Gamma_{L^{2m-2}}$ acts on Chow groups as multiplication by $L^{2m-2}$.
     
     As ``input'', we will make use of the following result:
     
     \begin{proposition}[L. Fu \cite{LFu0}]\label{input} Let $X$ be a smooth projective variety of dimension $n$ verifying the Lefschetz standard conjecture $B(X)$. Let $L\in A^1(X)$
      be a big line bundle. Then
        \[ \cup L^{n-2}\colon\ \ \ H^2(X)/N^1 H^2(X)\ \to\ H^{2n-2}(X)/N^{n-1}H^{2n-2}(X) \]
        is an isomorphism. (Here $N^\ast$ denotes the coniveau filtration \cite{BO}, so $N^i H^{2i}(X)$ is the image of the cycle class map.)
        Moreover, there is a correspondence $C\in A^2(X\times X)$ inducing the inverse isomorphism.
        \end{proposition}
        
        \begin{proof} This follows from the proof of \cite[Theorem 4.11]{LFu0}. Alternatively, here is an explicit argument: it follows from \cite[Lemma 3.3]{LFu0} that
          \[  \cup L^{n-2}\colon\ \ \ H^2(X)/N^1 H^2(X)\ \to\ H^{2n-2}(X)/N^{n-1}H^{2n-2}(X) \]
        is an isomorphism. Since the category of motives for numerical equivalence $\MM_{\rm num}$ is semisimple \cite{J0}, it follows that there is an isomorphism of motives
        \[   h^2(X)\oplus \bigoplus_i \LLL(m_i)\ \cong\  h^{2n-2}(X)(n-2)\oplus \bigoplus_j \LLL(m_j)\ \ \ \hbox{in}\ \MM_{\rm num}\ ,\]
        where the arrow from $h^2(X)$ to $h^{2n-2}(X)(n-2)$ is given by $\Gamma_{L^{n-2}}\in A^{2n-2}(X\times X)$, and $\LLL$ denotes the Lefschetz motive.
        Since homological and numerical equivalence coincide for $X$ and for $\LLL$, this implies there is also an isomorphism
        \[   h^2(X)\oplus \bigoplus_i \LLL(m_i)\ \cong\  h^{2n-2}(X)(n-2)\oplus \bigoplus_j \LLL(m_j)\ \ \ \hbox{in}\ \MM_{\rm hom}\ ,\]
        with the arrow from $h^2(X)$ to $h^{2n-2}(X)(n-2)$ being given by $\Gamma_{L^{n-2}}$. It follows that there exists a correspondence $C$ as required.
                     \end{proof}

     Any fibre $(S_b)^m$ of the family $\Ss^{m/B}\to B$ verifies the Lefschetz standard conjecture (the Lefschetz standard conjecture is known for products of surfaces). Applying proposition \ref{input}, this means that for all $b\in B$ there exists a correspondence
      \[ C_b\ \ \in A^{2}\bigl(   (S_b)^m\times     (S_b)^m\bigr) \]
     with the property that the compositions
      \[  
      H^2\bigl(  (S_b)^m\bigr)/N^1 \ \xrightarrow{\cdot    (L_b)^{2m-2}}\ H^{4m-2}\bigl( (S_b)^{m}\bigr)/N^{2m-1}\ \xrightarrow{ (C_b)_\ast} H^2\bigl( (S_b)^m\bigr)/N^1 \]
      and
       \[  
      H^{4m-2}\bigl(  (S_b)^m\bigr)/N^{2m-1} \ \xrightarrow{(C_b)_\ast}\ H^{2}\bigl( (S_b)^{m}\bigr)/N^1\ \xrightarrow{\cdot    (L_b)^{2m-2} } H^{4m-2}\bigl( (S_b)^m
      \bigr)/N^{2m-1} \]
             are the identity. In other words, for all $b\in B$ there exist 
             \[\gamma_b\ ,\ \ \  \gamma_b^\prime\in 
             A^{2m}\bigl(   (S_b)^m\times     (S_b)^m\bigr)\] 
             supported on $D_b\times D_b\subset    (S_b)^m\times     (S_b)^m$ for some divisor $D_b\subset (S_b)^m$ and such that
      \[   \begin{split}   \Pi_2^{\Ss^{m/B}}\vert_{(S_b)^m}    \circ C_b\circ \bigl( (\Pi_{4m-2}^{\Ss^{m/B}}\circ \Gamma_{L^{2m-2}}\circ \Pi_2^{\Ss^{m/B}})\vert_{(S_b)^m}\bigr)  &= 
          \Pi_2^{\Ss^{m/B}}\vert_{(S_b)^m} +\gamma_b\  
      \ ,\\
              \Pi_{4m-2}^{\Ss^{m/B}}\vert_{(S_b)^m}    \circ     \bigl( \Gamma_{L^{2m-2}}       \circ  (\Pi_{2}^{\Ss^{m/B}}\bigr)\vert_{(S_b)^m}\circ  C_b \circ \bigl(\Pi_{4m-2}^{\Ss^{m/B}})\vert_{(S_b)^m}\bigr)  &= \Pi_{4m-2}^{\Ss^{m/B}}\vert_{(S_b)^m} +\gamma_b^\prime\\
                 &\ \ \ \ \ \ \hbox{in}\ H^{4m}\bigl( (S_b)^m\times (S_b)^m\bigr)\ .\\  
                       \end{split}    \]
    Applying a Hilbert schemes argument as in \cite[Proposition 3.7]{V0} (cf. also \cite[Proposition 2.10]{excubic4}), we can find a relative correspondence 
     \[ \Cc\in A^2\bigl(  (\Ss^{m/B})\times_B (\Ss^{m/B})\bigr) \]
     doing the same job as the various $C_b$, i.e. such that for all $b\in B$ one has
      \[ \begin{split}  ( \Pi_2^{\Ss^{m/B}}\circ \Cc\circ \Pi_{4m-2}^{\Ss^{m/B}}\circ \Gamma_{L^{2m-2}}\circ \Pi_2^{\Ss^{m/B}})\vert_{(S_b)^m}  &= \Pi_2^{\Ss^{m/B}}\vert_{(S_b)^m} +\gamma_b\ ,\\
            ( \Pi_{4m-2}^{\Ss^{m/B}}\circ\Gamma_{L^{2m-2}}\circ \Pi_{2}^{\Ss^{m/B}}\circ \Cc\circ \Pi_{4m-2}^{\Ss^{m/B}})\vert_{(S_b)^m}  &= \Pi_{4m-2}^{\Ss^{m/B}}\vert_{(S_b)^m}+\gamma_b^\prime\\
           &\ \ \ \ \ \  \ \hbox{in}\ H^{4m}\bigl( (S_b)^m\times (S_b)^m\bigr)\ .
            \\  \end{split}       \]   
     Applying once more the same Hilbert schemes argument \cite[Proposition 3.7]{V0}, we can also find a divisor $\DD\subset \Ss^{m/B}$ and relative correspondences
     \[ \gamma\ ,\ \ \ \gamma^\prime\ \ \ \in A^{2m}\bigl( \Ss^{m/B}\times_B \Ss^{m/B}\bigr) \]
     supported on $\DD\times_B \DD$ and doing the same job as the various $\gamma_b$, resp. $\gamma_b^\prime$. That is, $\gamma$ and $\gamma^\prime$ are such that for all $b\in B$ one has
      \[ \begin{split}  (\Pi_2^{\Ss^{m/B}}\circ \Cc\circ \Pi_{4m-2}^{\Ss^{m/B}}\circ \Gamma_{L^{2m-2}}\circ \Pi_2^{\Ss^{m/B}})\vert_{(S_b)^m}  &= (\Pi_2^{\Ss^{m/B}}+\gamma)\vert_{(S_b)^m}\ ,\\
            ( \Pi_{4m-2}^{\Ss^{m/B}}\circ \Gamma_{L^{2m-2}}\circ \Pi_{2}^{\Ss^{m/B}}\circ \Cc\circ \Pi_{4m-2}^{\Ss^{m/B}})\vert_{(S_b)^m}  &= (\Pi_{4m-2}^{\Ss^{m/B}}+\gamma^\prime)
            \vert_{(S_b)^m} \\ 
        &\ \ \ \ \ \ \     \ \hbox{in}\ H^{4m}\bigl( (S_b)^m\times (S_b)^m\bigr)\ .
            \\  \end{split}       \]                
           
      We now make an effort to rewrite this more compactly: the relative correspondences defined as
      \begin{equation}\label{defg}\begin{split} \Gamma&:=  \Pi_2^{\Ss^{m/B}}\circ \Cc\circ \Pi_{4m-2}^{\Ss^{m/B}}\circ \Gamma_{L^{2m-2}}\circ \Pi_2^{\Ss^{m/B}}  -     \Pi_2^{\Ss^{m/B}} -\gamma \ ,\\
              \Gamma^\prime&:= \Pi_{4m-2}^{\Ss^{m/B}}\circ \Gamma_{L^{2m-2}}\circ \Pi_{2}^{\Ss^{m/B}}\circ \Cc\circ \Pi_{4m-2}^{\Ss^{m/B}}  -     \Pi_{4m-2}^{\Ss^{m/B}} -\gamma^\prime\ \ \ \in A^{2m}  
              \bigl( (\Ss^{m/B})
      \times_B  (\Ss^{m/B})\bigr)  \\
      \end{split}        \end{equation}
      have the property that their restriction to any fibre is homologically trivial. That is, writing
      \[ \begin{split}  \Gamma_b &:= \Gamma\vert_{(S_b)^m\times (S_b)^m} \\
         \Gamma^\prime_b&:=  (\Gamma^\prime)\vert_{(S_b)^m\times (S_b)^m} \ \ \ \in A^{2m}_{}\bigl(  (S_b)^m\times (S_b)^m\bigr)\\
         \end{split}\]
         for the restriction to a fibre, we have that     
       \begin{equation}\label{hom0}  \Gamma_b\ ,\ \ \ \Gamma_b^\prime \ \ \ \in A^{2m}_{hom}\bigl(  (S_b)^m\times (S_b)^m\bigr)\ \ \ \forall b\in B\ ,\end{equation}
       
      Let us now define the modified relative correspondences
      \[ \begin{split}   \Gamma_1 &:=\Pi_2^{\Ss^{m/B}}\circ \Gamma\circ \Pi_2^{\Ss^{m/B}}\ ,\\ 
                               \Gamma_1^\prime&:= \Pi_{4m-2}^{\Ss^{m/B}}\circ \Gamma^\prime\circ  \Pi_{4m-2}^{\Ss^{m/B}}\ \ \ \in A^{2m} \bigl( \Ss^{m/B}\times_B \Ss^{m/B}\bigr)\ .\\
                              \end{split}\]
                              
        This modification does not essentially modify the fibrewise rational equivalence class: we have
        
        \begin{equation}\label{modif} \begin{split}   
                   (\Gamma_1)_b     &= \Gamma_b + (\gamma_1)_b\ ,\\
                 (\Gamma^\prime_1)_b     &= (\Gamma^\prime)_b + (\gamma^\prime_1)_b\ \ \ \hbox{in}\  A^{2m}\bigl( (S_b)^m\times (S_b)^m\bigr)\ ,\\
                 \end{split}\end{equation}
                 where $\gamma_1, \gamma_1^\prime\in A^{2m}\bigl(\Ss^{m/B}\times_B \Ss^{m/B}\bigr)$ are relative correspondences supported on $\DD\times_B \DD$.
                 (Indeed, this is true because $(\Pi_i^{(S_b)^m})^{\circ 2}=\Pi_i^{(S_b)^m}$ for all $i$, and the relative correspondences 
                 \[\Pi_2^{\Ss^{m/B}}\circ \gamma\circ \Pi_2^{\Ss^{m/B}}\ , \ \ \Pi_{4m-2}^{\Ss^{m/B}}\circ \gamma^\prime\circ \Pi_{4m-2}^{\Ss^{m/B}}\] 
                 are still supported on $\DD\times_B \DD$.)       
                 
     As $\Gamma$ and $\Gamma^\prime$ were fibrewise homologically trivial (equation (\ref{hom0})), the same is true for $\Gamma_1$ and $\Gamma_1^\prime$:
     
     \begin{equation}\label{hom1}  (\Gamma_1)_b\ ,\ \ \ (\Gamma_1^\prime)_b \ \ \ \in A^{2m}_{hom}\bigl(  (S_b)^m\times (S_b)^m\bigr)\ \ \ \forall b\in B\ ,\end{equation}
       
We now proceed to upgrade (\ref{hom1}) to a statement concerning the action on Chow groups:

\begin{claim}\label{rat1} We have
  \[ \begin{split}  \bigl((\Gamma_1)_b\bigr){}_\ast &=0 \colon\ \ \  A^i_{hom}\bigl((S_b)^m\bigr)  \ \to\  A^i_{hom}\bigl((S_b)^m\bigr)\ \ \ \forall b\in B\ ,\\
                            \bigl((\Gamma^\prime_1)_b\bigr){}_\ast  &=0 \colon \ \ \  A^i_{hom}\bigl((S_b)^m\bigr)  \ \to\  A^i_{hom}\bigl((S_b)^m\bigr)\ \ \ \forall b\in B\ .\\
                            \end{split}\]
                            \end{claim}
     
Let us prove claim \ref{rat1} for $\Gamma_1$ (the argument for $\Gamma_1^\prime$ is only notationally different). Using proposition \ref{prod3}, one finds there is a fibrewise equality modulo rational equivalence
  \begin{equation}\label{same} (\Gamma_1)_b = \Bigl( (\sum_{i=1}^m \Xi_i\circ \Theta_i)\circ \Gamma\circ (\sum_{i=1}^m  \Xi_i\circ \Theta_i ) \Bigr){}_b\ \ \ \hbox{in}\     A^{2m}\bigl( (S_b)^m\times (S_b)^m\bigr)\ \ \ \forall b\in B\ .\end{equation}
  To rewrite this, let us define relative correspondences
  \[ \Gamma_{k,\ell}:= \Theta_k\circ \Gamma\circ \Xi_\ell\ \ \ \in A^{2}\bigl( \Ss^{}\times_B \Ss^{}\bigr)\ \ \ ( 1\le k,\ell\le m)\ .\]
  With this notation, equality (\ref{same}) becomes the equality
  \begin{equation}\label{same2} (\Gamma_1)_b = \Bigl(  \sum_{k=1}^m \sum_{\ell=1}^m \Xi_k\circ \Gamma_{k,\ell}\circ \Theta_\ell\Bigr){}_b    \ \ \ \hbox{in}\     A^{2m}\bigl( (S_b)^m\times (S_b)^m\bigr)\ \ \ \forall b\in B\ .\end{equation}
  
  As $\Gamma$ is fibrewise homologically trivial (equation (\ref{hom0})), the same is true for the various $\Gamma_{k,\ell}$:
  \[  (\Gamma_{k,\ell})_b \ \ \ \in A^{2}_{hom}(  S_b\times S_b)\ \ \ \forall b\in B\  \ \   ( 1\le k,\ell\le m)\ .\]  
 This means that we can apply Voisin's key result, proposition \ref{propvois}, to the relative correspondence $\Gamma_{k,\ell}$. The conclusion is that for each $1\le k,\ell\le m$, there exists a cycle $\delta_{k,\ell}\in A^2(G\times G)$ (where $G=G(2,5)$ is the Grassmannian as in theorem \ref{muk}) such that
   \[  (\Gamma_{k,\ell})_b  +(\delta_{k,\ell})_b=0 \ \ \ \hbox{in}\ A^{2}_{}(  S_b\times S_b)\ \ \ \forall b\in B\  \ \ .\]    
   Since a Grassmannian has trivial Chow groups, this implies in particular that
   \[  \bigl(  (\Gamma_{k,\ell})_b \bigr){}_\ast =0\colon\ \ \ A^i_{hom}(S_b)\ \to\ A^i_{hom}(S_b)\ \ \ \forall b\in B\ .\]
   In view of equality (\ref{same2}), this implies
  \[  \bigl((\Gamma_1)_b\bigr){}_\ast \colon\ \ \ =0 \ \ \  A^i_{hom}\bigl((S_b)^m\bigr)  \ \to\  A^i_{hom}\bigl((S_b)^m\bigr)\ \ \ \forall b\in B\   ,\]    
  as claimed.
  
  (The argument for $\Gamma_1^\prime$ is the same; it suffices to replace the use of proposition \ref{prod3} by proposition \ref{prod2}.)
  Claim \ref{rat1} is now proven.
  
  It is high time to wrap up the proof of theorem \ref{main}. For $b\in B$ general, the restrictions $(\gamma_1)_b, (\gamma_1^\prime)_b$ of equation (\ref{modif}) will be supported on $D_b\times D_b\subset (S_b)^m\times (S_b)^m$, where $D_b\subset (S_b)^m$ is a divisor. As such, the action
    \[ \begin{split}   \bigl( (\gamma_1)_b\bigr){}_\ast\colon\ \ \ &R\bigl( (S_b)^m\bigr)\ \to\ R\bigl( (S_b)^m\bigr)\ ,\\       
                           \bigl( (\gamma^\prime_1)_b\bigr){}_\ast\colon\ \ \ &R\bigl( (S_b)^m\bigr)\ \to\ R\bigl( (S_b)^m\bigr)\ ,\\     
                       \end{split}\]
      is $0$ for general $b\in B$, where $R$ is either $A^2_{hom}$ or $A^{2m}$. Combining this observation with equation (\ref{modif}) and claim (\ref{rat1}), we find that
      \[  \begin{split}  (\Gamma_b )_\ast=0\colon\ \ \ R\bigl( (S_b)^m\bigr)\ \to\ R\bigl( (S_b)^m\bigr)\ ,\\
                               (\Gamma_b^\prime)_\ast=0\colon\ \ \ R\bigl( (S_b)^m\bigr)\ \to\ R\bigl( (S_b)^m\bigr)\ \\
                           \end{split}\]
                   (where, once more, $R$ is either $A^2_{hom}$ or $A^{2m}$).
                   
  In view of the definition (\ref{defg}) of $\Gamma, \Gamma^\prime$ (and using that the cycles $\gamma_b, \gamma^\prime_b$ occuring in (\ref{defg}) are supported in codimension $1$ for $b\in B$ general, and so act trivially on $A^2_{hom}()$ and on $A^{2m}()$), it follows that                
         \begin{equation}\label{this} \begin{split} \Bigl( \Pi_2^{(S_b)^m}\circ \Cc_b\circ \Pi_{4m-2}^{(S_b)^m}\circ (\Gamma_{L^{2m-2}})_b \circ \Pi_2^{(S_b)^m} -  \Pi_2^{(S_b)^m}\Bigr){}_\ast&=0\colon\ \ \ A^2_{hom}\bigl( (S_b)^m\bigr) \to      A^2_{hom}\bigl( (S_b)^m\bigr)\ ,\\
                        \Bigl( \Pi_{4m-2}^{(S_b)^m}\circ (\Gamma_{L^{2m-2}})_b\circ \Pi_2^{(S_b)^m}\circ \Cc_b\circ \Pi_{4m-2}^{(S_b)^m} - \Pi_{4m-2}^{(S_b)^m}\Bigr){}_\ast&=0\colon\ \ \ A^{2m}\bigl( (S_b)^m\bigr) \to A^{2m}\bigl( (S_b)^m\bigr)\ ,\\
                       \end{split}\end{equation}
                  for general $b\in B$. Since $\Pi_2^{(S_b)^m}$ acts as the identity on $A^2_{(2)}((S_b)^m)$, it follows from the first line of (\ref{this}) that
              \[       \Bigl( \Pi_2^{(S_b)^m}\circ \Cc_b\circ \Pi_{4m-2}^{(S_b)^m}\circ (\Gamma_{L^{2m-2}})_b\Bigr){}_\ast   =\ide\colon\ \ \ A^2_{(2)}\bigl( (S_b)^m\bigr)\ 
                  \to\ A^2_{(2)}\bigl( (S_b)^m\bigr)\ ;\]
             in particular
             \[ \cdot L^{2m-2}\colon\ \ \ A^2_{(2)}\bigl( (S_b)^m\bigr)\ \to\ A^{2m}_{(2)}\bigl( (S_b)^m\bigr) \]
             is injective for general $b\in B$. Likewise, it follows from the second line of (\ref{this}) that
 \[ \Bigl( \Pi_{4m-2}^{(S_b)^m}\circ (\Gamma_{L^{2m-2}})_b\circ \Pi_2^{(S_b)^m}\circ \Cc_b\Bigr){}_\ast =\ide\colon\ \ \ A^{2m}_{(2)}\bigl( (S_b)^m\bigr)\ \to\ A^{2m}_{(2)}\bigl( (S_b)^m\bigr)\]
  for general $b\in B$. However, the image of
    \[ A^2_{(2)}\bigl( (S_b)^m\bigr)\ \xrightarrow{\cdot L^{2m-2}}\ A^{2m}\bigl( (S_b)^m\bigr) \]
    is contained in $A^{2m}_{(2)}((S_b)^m)$, since $L\in A^1((S_b)^m)=A^1_{(0)}((S_b)^m)$, and so this further simplifies to
    \[ \Bigl( (\Gamma_{L^{2m-2}})_b\circ \Pi_2^{(S_b)^m}\circ \Cc_b\Bigr){}_\ast =\ide\colon\ \ \ A^{2m}_{(2)}\bigl( (S_b)^m\bigr)\ \to\ A^{2m}_{(2)}\bigl( (S_b)^m\bigr)\ \]
    for general $b\in B$. In particular,
      \[ \cdot L^{2m-2}\colon\ \ \ A^2_{(2)}\bigl( (S_b)^m\bigr)\ \to\ A^{2m}_{(2)}\bigl( (S_b)^m\bigr) \]
             is surjective for general $b\in B$.  
    
    Theorem \ref{hard} is now proven for general $b\in B$ (this suffices for the purposes of this note). To prove the theorem for {\em all\/} $b\in B$, one may observe that the above argument can be made to work ``locally around a given $b_0\in B$'', i.e. given $b_0\in B$ one can find relative correspondences $\gamma, \gamma^\prime, \ldots$ supported in codimension $1$ and in general position with respect to the fibre over $b_0$.     
                               
      \end{proof}

   Theorem \ref{hard} can be reformulated in terms of Hilbert schemes:       
 
 \begin{corollary}\label{hardhilb} Let $S_b$ be a $K3$ surface of degree $10$, and let $X=(S_b)^{[m]}$ be the Hilbert scheme of length $m$ subschemes of $S$. Let $L\in A^1(\Ss^{m/B})$ be a relatively big line bundle, and set
   \[ L_X := (f_b)^\ast (p_b)_\ast (L_b)\ \ \ \in A^1(X)\ ,\]
   where $p_b\colon (S^b)^m\to (S_b)^{(m)}$ denotes the projection, and $f_b\colon (S^b)^{[m]}\to (S_b)^{(m)}$ denotes the Hilbert--Chow morphism. Then
   \[  \cdot (L_X)^{m-1}\colon\ \ \ A^2_{(2)}(X)\ \to\ A^{2m}_{(2)}(X) \]
    is an isomorphism.
  \end{corollary}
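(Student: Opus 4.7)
The plan is to transfer Theorem \ref{hard} from $(S_b)^m$ to the Hilbert scheme $X = (S_b)^{[m]}$ via the correspondence $\Phi \in A^{2m}(X \times (S_b)^m)$ of Lemma \ref{compat}. Recall that $\Phi$ is induced by the diagram $X \xleftarrow{q'} \wt{(S_b)^m} \xrightarrow{q} (S_b)^m$, where $q$ is the blow--up of the diagonal, $q'$ the natural lift of the Hilbert--Chow morphism, and $f_b \circ q' = p_b \circ q$. Setting $L_{\mathrm{sym}} := \sum_{\sigma\in\Sy_m}\sigma^\ast L \in A^1(\Ss^{m/B})$ (a relatively $\Sy_m$--invariant line bundle), the identity $p_b^\ast(p_b)_\ast L_b = L_{\mathrm{sym},b}$ together with the commutativity above yields $(q')^\ast L_X = q^\ast L_{\mathrm{sym},b}$ in $A^1(\wt{(S_b)^m})$. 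The projection formula for the birational morphism $q$ (and its transpose for $q'$) then gives, for every $k \ge 0$,
\[ \Phi_\ast(\alpha \cdot L_X^k) \;=\; \Phi_\ast(\alpha)\cdot L_{\mathrm{sym},b}^k\,,\qquad {}^t\Phi_\ast(\beta\cdot L_{\mathrm{sym},b}^k) \;=\; {}^t\Phi_\ast(\beta)\cdot L_X^k\,, \]
valid for all $\alpha\in A^\ast(X)$, $\beta\in A^\ast((S_b)^m)$.

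Since each summand $\sigma^\ast L$ is relatively big (as $\sigma$ is a fibrewise automorphism), $L_{\mathrm{sym}}$ is itself a relatively big line bundle, and so Theorem \ref{hard} applied to $L_{\mathrm{sym}}$ delivers an isomorphism
\[ \cdot L_{\mathrm{sym},b}^{2m-2}\colon\ \ \ A^2_{(2)}\bigl((S_b)^m\bigr)\ \xrightarrow{\sim}\ A^{2m}_{(2)}\bigl((S_b)^m\bigr)\ . \]
On the other hand, Lemma \ref{compat} tells us that $\Phi_\ast$ and ${}^t\Phi_\ast$ preserve the pieces $A^2_{(2)}$ and $A^{2m}_{(\ast)}$ on both sides, and that $\tfrac{1}{m}\,{}^t\Phi\circ\Phi$ acts as the identity on both $A^{2m}(X)$ and $A^2_{hom}(X) \supset A^2_{(2)}(X)$.

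Combining these two facts (with $k = 2m-2$, the power required on a $2m$--dimensional variety to send $A^2$ into $A^{2m}$) gives the corollary. For injectivity: if $\alpha\in A^2_{(2)}(X)$ satisfies $\alpha\cdot L_X^{2m-2} = 0$, then $\Phi_\ast(\alpha)\cdot L_{\mathrm{sym},b}^{2m-2}=0$, whence $\Phi_\ast(\alpha)=0$ by Theorem \ref{hard}, whence $\alpha = \tfrac{1}{m}\,({}^t\Phi\circ\Phi)_\ast(\alpha)=0$. For surjectivity: given $\beta\in A^{2m}_{(2)}(X)$, Theorem \ref{hard} produces $\alpha'\in A^2_{(2)}((S_b)^m)$ with $\Phi_\ast(\beta) = \alpha'\cdot L_{\mathrm{sym},b}^{2m-2}$, and then
\[ \beta \;=\; \tfrac{1}{m}\,({}^t\Phi\circ\Phi)_\ast(\beta) \;=\; \tfrac{1}{m}\,{}^t\Phi_\ast\bigl(\alpha'\cdot L_{\mathrm{sym},b}^{2m-2}\bigr) \;=\; \Bigl(\tfrac{1}{m}\,{}^t\Phi_\ast(\alpha')\Bigr)\cdot L_X^{2m-2}\,, \]
with $\tfrac{1}{m}\,{}^t\Phi_\ast(\alpha')\in A^2_{(2)}(X)$ by Lemma \ref{compat}. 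The argument is essentially formal once the projection--formula identity of the first paragraph is in place; I therefore do not expect any serious obstacle, the only delicate step being the verification of $(q')^\ast L_X = q^\ast L_{\mathrm{sym},b}$, which comes directly from the commutative diagram and the standard relations $q_\ast q^\ast = \mathrm{id}$ and $p_b^\ast (p_b)_\ast = \sum_{\sigma}\sigma^\ast$.
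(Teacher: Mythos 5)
Your proof is correct and takes essentially the same route as the paper: both apply Theorem~\ref{hard} to the symmetrized line bundle $L'=p^\ast p_\ast(L)=\sum_\sigma\sigma^\ast L$ and then transfer from $(S_b)^m$ to $(S_b)^{[m]}$, the only difference being that the paper packages the transfer via explicit $f^\ast$--isomorphisms from de Cataldo--Migliorini between $A^\ast_{(\ast)}((S_b)^{(m)})$ and $A^\ast_{(\ast)}(X)$, whereas you run it directly through the correspondence $\Phi$ of Lemma~\ref{compat} and the projection formula, which are the same underlying facts. One remark: you correctly use the exponent $2m-2$ throughout; the exponent ``$m-1$'' in the statement (and in the paper's commutative diagrams) is a typo, as $\dim X=2m$ forces $2+k=2m$, i.e.\ $k=2m-2$.
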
  
    
  \begin{proof} Let the symmetric group $\Sy_m$ act on $\Ss^{m/B}$ by permuting the factors, and let
   \[  p\colon\ \ \ \Ss^{m/B}\ \to\ \Ss^{m/B}/\Sy_m\]
   denote the quotient morphism. Theorem \ref{hard} applies to the line bundle
   \[ L^\prime:= p^\ast p_\ast (L) = \sum_{\sigma\in\Sy} \sigma^\ast(L)\ \ \ \in A^1(\Ss^{m/B})\ .\]
   There is a commutative diagram
     \[ \begin{array}[c]{ccc}
              A^2_{(2)}((S_b)^m)^{\Sy_m} & \xrightarrow{\cdot (L_b^\prime)^{m-1}}& A^{2m}_{(2)}((S_b)^m)^{\Sy_m}\\
              {\scriptstyle (p_b)^\ast} \uparrow{\scriptstyle \cong}  &&      {\scriptstyle (p_b)^\ast} \uparrow{\scriptstyle \cong} \\
               A^2_{(2)}((S_b)^{(m)}) & \xrightarrow{\cdot ((p_b)_\ast(L_b) )^{m-1}}& A^{2m}_{(2)}((S_b)^{(m)})\\   
             \end{array}\]   
       In view of theorem \ref{hard} (applied to $L^\prime$), the lower horizontal arrow is an isomorphism.      
                                   
  It follows from the de Cataldo--Migliorini isomorphism of motives \cite{CM} that there is a correspondence--induced isomorphism
    \[ A^2(X)\cong A^2((S_b)^{(3)})\oplus A^1()\oplus A^0()\ ,\]
    and so in particular an isomorphism
    \[ A^2_{AJ}(X)\cong A^2_{AJ}((S_b)^{(3)})  \ .\]
    Since $A^2_{(2)}()\subset A^2_{AJ}()$, and the de Cataldo--Migliorini respects the bigrading (by construction of the latter), this implies that
     \[ f^\ast\colon\ \ \ A^2_{(2)}((S_b)^{(m)})\ \to\ A^2_{(2)}(X) \]
     is an isomorphism.
     
     Similarly, there is an isomorphism
     \[ f^\ast\colon\ \ \ A^{2m}((S_b)^{(m)})\ \xrightarrow{\cong}\ A^{2m}(X) \]
     which respects the bigrading. 
     
     Corollary \ref{hardhilb} now follows from what we have said above, in view of the commutative diagram 
     \[ \begin{array}[c]{ccc}
              A^2_{(2)}(X) & \xrightarrow{\cdot (L_X)^{m-1}}& A^{2m}_{(2)}(X)\\
              {\scriptstyle (f_b)^\ast} \uparrow{\scriptstyle \cong}  &&      {\scriptstyle (f_b)^\ast} \uparrow{\scriptstyle \cong} \\
               A^2_{(2)}((S_b)^{(m)}) & \xrightarrow{\cdot (p_\ast(L_b) )^{m-1}}& A^{2m}_{(2)}((S_b)^{(m)})\\   
             \end{array}\]                 
  \end{proof}

   One can also reformulate theorem \ref{hard} in terms of double EPW cubes; this will come in useful when proving our main result (theorem \ref{main}). 
   
   \begin{corollary}\label{hardepw} Let $\XX_{\Delta^1}\to \Delta^1$ be the family of double EPW cubes parametrized by the divisor $\Delta^1\subset LG^1_\nu$ of theorem \ref{ikkr}. Let $L\in A^1(\XX_{\Delta^1})$ be a line bundle that is in the image of the pullback map
   \[   A^1(\EE)\ \xrightarrow{h^\ast}\ A^1(\XX_{\Delta^1}) \]
   (where $h\colon\XX_{\Delta^1}\dashrightarrow \EE$ is as in proposition \ref{families}). Assume $L$ is relatively big.
      Then
     \[   \cdot (L_A)^4\colon\ \ \ A^2_{(2)}(X_A)\ \to\ A^6_{(2)}(X_A) \]
    is an isomorphism for general $A\in\Delta^1$. 
  \end{corollary}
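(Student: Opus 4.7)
The strategy is to reduce the statement to Corollary \ref{hardhilb} (with $m=3$) by transferring across the birational equivalence $\phi_A\colon (S_A)^{[3]}\dashrightarrow X_A$ supplied by Theorem \ref{ikkr}(\rom2) for general $A\in\Delta^1$. The key inputs are Rie{\ss}'s theorem on Chow motives of birational hyperk\"ahler varieties (invoked via Lemma \ref{hk}) together with the diagram of Proposition \ref{families}, which will be used to identify the transferred line bundle as being of the specific form required by Corollary \ref{hardhilb}.

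First I would apply Rie{\ss}'s theorem to $\phi_A$: this yields an isomorphism of Chow motives $h(X_A)\cong h((S_A)^{[3]})$ compatible with the multiplicative structure, and (as observed in the proof of Corollary \ref{mck}) compatible with the MCK decomposition. In particular we obtain bigraded ring isomorphisms $A^\ast_{(\ast)}(X_A)\cong A^\ast_{(\ast)}((S_A)^{[3]})$, realized by strict transforms and hence an isomorphism in codimension one.

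Second, I would transfer the line bundle. Since $L$ is pulled back from $\EE$, the map $\Psi\colon\TT^{[3]/\MM^0_6}\dashrightarrow \EE$ of Proposition \ref{families} produces, by pullback, a line bundle on a Zariski open of the universal Hilbert cube, whose fibrewise restriction $L'_A\in A^1((S_A)^{[3]})$ corresponds to $L_A$ under the Chow isomorphism above; in particular $L'_A$ is big since $L_A$ is. Moreover, since $L$ already lives on the EPW cube $\DD_{\Delta^1}$ below the double cover $\rho$ (so $L_A$ is $\iota_A$-invariant), tracing through the left part of the diagram of Proposition \ref{families} -- where $\Ss^{3/B^0}$ maps to $\TT^{[3]/\MM^0_6}$ by symmetrization followed by the Hilbert--Chow morphism -- shows that $L'_A$ is of the form $(f_A)^\ast (p_A)_\ast L''_A$ for a relatively big line bundle $L''\in A^1(\Ss^{3/B^0})$. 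Corollary \ref{hardhilb} applied to $L''$ with $m=3$ then yields that
\[ \cdot (L'_A)^4\colon\ \ \ A^2_{(2)}((S_A)^{[3]})\ \to\ A^6_{(2)}((S_A)^{[3]}) \]
is an isomorphism for general $A\in\Delta^1$; combining with the bigraded ring isomorphism of the first step transfers the conclusion to $X_A$.

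The hard part will be this middle step -- specifically, verifying that $L'_A$ genuinely descends to the symmetric product $(S_A)^{(3)}$, i.e.\ involves no nontrivial multiple of the exceptional class of the Hilbert--Chow morphism. If that explicit identification proves delicate, an equally viable alternative is to re-run the spread argument of Theorem \ref{hard} directly in the setting of the family $\TT^{[3]/\MM^0_6}\to \MM^0_6$, or indeed $\XX_{\Delta^1}\to \Delta^1$: the only inputs are the Lefschetz standard conjecture (available on Hilbert schemes of K3 surfaces, hence on all fibres in those families) and the existence of an MCK decomposition on each fibre, both of which are in place here.
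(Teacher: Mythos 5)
Your proposal is correct and takes essentially the same route as the paper: pass to the Hilbert cube via Rie{\ss}, then to $(S_b)^3$, and apply the hard Lefschetz result of Theorem \ref{hard}. The ``hard part'' you flag about descent to $(S_A)^{(3)}$ resolves immediately because the rational map $\Psi_b\colon (S_b)^{[3]}\dashrightarrow D_A$ factors through the Hilbert--Chow morphism by construction of the diagram in Proposition \ref{families}, so the pulled-back class is automatically a Hilbert--Chow pullback---this is exactly what the paper exploits (the paper applies Theorem \ref{hard} directly to $L_\Ss := \Phi^\ast\Psi^\ast(L_\EE)$ and runs a chain of Chow-group isomorphisms through $(S_b)^3$, $(S_b)^{(3)}$, $(S_b)^{[3]}$, $X_A$, rather than citing Corollary \ref{hardhilb}, but this is a cosmetic difference).
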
 
  
 \begin{proof} Let us write $L=h^\ast(L_\EE)$, where $L_\EE\in A^1(\EE)$ is relatively big.
  
   Let $\Ss^{3/B^0}$ denote the family of third powers of degree $10$ $K3$ surfaces over the Zariski open $B^0\subset B$ as in proposition \ref{families}. We have seen (theorem \ref{ikkr}) that for a general $A\in\Delta^1$ there is $b\in B$ such that $A=f(b)$ and there is a birational map
             \[   (S_b)^{[3]}\ \stackrel{\phi_b}{\dashrightarrow}\ X_{A}\ .\]  
       This fits into a commutative diagram
       \[  \begin{array}[c]{ccccc}
            (S_b)^3 & \stackrel{\Phi_b}{\dashrightarrow}&\ \ \ \  (S_b)^{[3]}& \stackrel{\phi_b}{\dashrightarrow}& X_A\\
             & \searrow\ \ \ \ \swarrow && \ \ \ \searrow{\scriptstyle \Psi_b} \ \ \ \  \ \ \ \swarrow{\scriptstyle h_b}   &\\
              & (S_b)^{(3)} &\ \ \ \stackrel{\Psi_b^\prime}{\dashrightarrow}& D_A&\\
              \end{array}\]

                The pullback $L_{\Ss}:=\Phi^\ast \Psi^\ast (L_\EE)\in A^1(\Ss^{3/B^0})$ is relatively big, and so theorem \ref{hard} applies to $L_{\Ss}$. 
                There is a commutative diagram
             \[  \begin{array}[c]{ccc}
                   A^2_{(2)}\bigl( (S_b)^3\bigr)^{\Sy_3} & \xrightarrow { \cdot  ( (L_{\Ss})_b)^4}&  A^6_{(2)}\bigl( (S_b)^3\bigr)^{\Sy_3}\\
                  \ \ \ \  \uparrow {\scriptstyle \cong} &&   \ \ \ \  \uparrow {\scriptstyle \cong} \\   
             A^2_{(2)}\bigl( (S_b)^{(3)}\bigr)^{} & \xrightarrow { \cdot  ( (\Psi^\prime_b)^\ast((L_\EE)_A))^4}&  A^6_{(2)}\bigl( (S_b)^{(3)}\bigr)^{}\\
                \ \ \ \  \downarrow {\scriptstyle \cong} &&   \ \ \ \  \downarrow {\scriptstyle \cong}  \\  
     A^2_{(2)}\bigl( (S_b)^{[3]}\bigr)^{} & \xrightarrow { \cdot  ( (\Psi_b)^\ast((L_\EE)_A))^4}&  A^6_{(2)}\bigl( (S_b)^{[3]}\bigr)^{}\\
              \ \ \ \  \uparrow {\scriptstyle \cong} &&   \ \ \ \  \uparrow {\scriptstyle \cong} \\   
           A^2_{(2)}(X_{A}) & \xrightarrow{   \cdot (L_{A})^4} & A^6_{(2)}(X_{A})\\
           \end{array} \]
           (Here the lowest vertical arrows are isomorphisms thanks to Rie\ss's isomorphism \cite{Rie}. The lowest square is commutative, because $\phi_b$ is a codimension $1$ isomorphism, and the divisors $L_A=(h_b)^\ast((L_\EE)_A))$ and $(\Psi_b)^\ast((L_\EE)_A)$ coincide on the open where $\phi_b$ is an isomorphism.) Theorem \ref{hard} implies the top horizontal arrow is an isomorphism. It follows that all horizontal arrows are isomorphisms, and corollary \ref{hardepw} is proven.
         \end{proof}

 \begin{remark}\label{question} Looking at corollary \ref{hardhilb}, one might hope that a similar result is true more generally. 
 
 Let $X$ be any hyperk\"ahler variety of dimension $2m$, and suppose the Chow ring of $X$ has a bigraded ring structure $A^\ast_{(\ast)}(X)$. One can ask the following questions:
 
 \noindent
 (\rom1) Let $L\in A^1(X)$ be an ample line bundle. Is it true that there are isomorphisms
   \[  \cdot L^{2m-2i+j}\colon\ \  A^i_{(j)}(X)\ \xrightarrow{\cong}\ A^{2m-i+j}_{(j)}(X)\ \ \ \hbox{for\ all\ } 0\le 2i-j\le 2m\  \ ?\]   
   
 \noindent
 (\rom2) Let $L\in A^1(X)$ be a big line bundle. Is it true that there are isomorphisms
  \[    \cdot L^{2m-i}\colon\ \  A^i_{(i)}(X)\ \xrightarrow{\cong}\ A^{2m}_{(i)}(X)\ \ \ \hbox{for\ all\ } 0\le i\le 2m\ \ ?\]     
  
  The answer to the first question is ``yes'' for generalized Kummer varieties \cite{hard}.
  The answer to both questions is ``I don't know'' for Hilbert schemes of $K3$ surfaces. 
  
  (However, if the $K3$ surface $S$ has small genus there exists a Mukai model, and presumably the above proof can then be extended to settle questions (\rom1) and (\rom2) affirmatively for $A^2_{(2)}(S^{[m]})$ and line bundles $L$ that exist relatively. The question for $A^i_{ (j)}(S^{[m]})$ with $i>2$ becomes more complicated, as one would need an analogon of proposition \ref{propvois} for higher fibre products $\Ss^{m/B}$ with $m>2$.)
   \end{remark}   
    
  \begin{remark} Let $X$ be either $S^m$ or $S^{[m]}$ where $S$ is a degree $10$ $K3$ surface. It follows from (the proof of) corollary \ref{hardhilb} that 
    \[A^2_{(2)}(X)\ \subset\ A^2_{alg}(X)\ ,\]
    where $A^\ast_{alg}()\subset A^\ast()$ denotes the subgroup of algebraically trivial cycles. This is in agreement with a conjecture of Jannsen \cite{J3}, stipulating that for any smooth projective variety $Z$ one should have
    \[ F^i A^i(Z)\ \subset\ A^i_{alg}(Z)\ ,\]
    where $F^\ast$ is the conjectural Bloch--Beilinson filtration.
  \end{remark} 
    
  \begin{remark} Let $X$ be either $S^m$ or $S^{[m]}$ where $S$ is a degree $10$ $K3$ surface, and let $L\in A^1(X)$ be a line bundle as in theorem \ref{hard} (resp. as in corollary \ref{hardhilb}).
 Provided $L$ is sufficiently ample, there exists a smooth complete intersection surface $Y\subset X$ defined by the linear system $\vert L\vert$. Theorem \ref{hard} (resp. corollary \ref{hardhilb}) then implies that $A^{2m}_{(2)}(X)$ is supported on $Y$, and that
   \[  A^2_{(2)}(X)\ \to\ A^2(Y) \]
   is injective. This injectivity statement is in agreement with Hartshorne's ``weak Lefschetz'' conjecture for Chow groups \cite{Ha} (we recall that it is expected that
   $A^2_{(2)}(X)=A^2_{hom}(X)$ for these $X$).
    \end{remark}

 \section{Main result}

 \begin{theorem}\label{main} Let $X$ be a double EPW cube, and assume $X=X_A$ for $A\in \Delta^1$ general (where $\Delta^1\subset LG^1_\nu$ is the divisor of theorem \ref{ikkr}). Let $\iota=\iota_A\in\aut(X)$ be the anti--symplectic involution given by the double cover $X_A\to D_2^A$. Then
  \[ \begin{split}  \iota^\ast&=-\ide\colon\ \ \ A^6_{(2)}(X)\ \to\ A^6(X)\ ,\\
                        (\Pi_2^X)_\ast \iota^\ast&=-\ide\colon\ \ \ A^2_{(2)}(X)\ \to\ A^2_{(2)}(X)\ .
                        \end{split}\]   
      \end{theorem}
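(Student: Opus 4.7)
The plan is to adapt Voisin's spread technique, in the same spirit as the proof of theorem \ref{hard}, and to derive both statements of the theorem simultaneously from a single application of proposition \ref{propvois} on the Mukai family of degree $10$ $K3$ surfaces. The first step is to use the birational equivalence $X_A\sim (S_A)^{[3]}$ (theorem \ref{ikkr}(\rom2)) together with Rie\ss's motivic isomorphism (invoked via lemma \ref{hk}) to transfer the problem to one about the induced self--correspondence $\tilde\iota$ on $(S_A)^{[3]}$. The correspondence $\Phi$ of lemma \ref{compat}, composed with the projectors $\Xi_i,\Theta_i$ and $\Xi_i',\Theta_i'$ furnished by propositions \ref{prod2} and \ref{prod3}, then reduces both assertions to the vanishing of a collection of relative cycles $\Gamma_{k,\ell}\in A^2(\Ss\times_B \Ss)$ ($1\le k,\ell\le 3$) as self--correspondences acting on $A^2_{hom}(S_b)$, where $\Ss\to B$ is the Mukai family of remark \ref{family}.

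Next, I would verify the cohomological input. Since $\iota$ is anti--symplectic, $\iota^\ast=-\ide$ on $H^{2,0}(X_A)$ and, by Hodge type considerations, on the whole transcendental part of $H^2(X_A)$. Because Rie\ss's isomorphism and the correspondence $\Phi$ are morphisms of Hodge structures, transporting through the above reduction yields that each $\Gamma_{k,\ell}$, viewed fibrewise on $S_b\times S_b$, acts as $-\ide$ on the transcendental $H^2(S_b)$ and trivially on its algebraic part. After the same bookkeeping used in the proof of theorem \ref{hard} (composing with the projectors $\Pi_2^\Ss$ on both sides and modifying by relative cycles supported in codimension one, in the style of equations (\ref{modif}) and (\ref{hom1})), the resulting modified $\Gamma_{k,\ell}$ are fibrewise homologically trivial.

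At this point, Voisin's spread theorem applies directly. By Mukai's theorem \ref{muk}, each $S_b$ is a codimension $3$ complete intersection of sections of $\OO_G(1)^{\oplus 3}\oplus \OO_G(2)$ in $G=G(2,5)$, and $A^\ast_{hom}(G)=0$. Proposition \ref{propvois} (with $n=2$) produces cycles $\delta_{k,\ell}\in A^2(G\times G)$ satisfying $(\Gamma_{k,\ell})_b+(\delta_{k,\ell})_b=0$ in $A^2(S_b\times S_b)$ for general $b\in B$. Since $A^2(G)=0$, the $\delta_{k,\ell}$ act trivially on $A^2_{hom}(S_b)$, so each $\Gamma_{k,\ell}$ itself acts as zero on $A^2_{hom}(S_b)$. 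Reassembling through propositions \ref{prod2}, \ref{prod3}, lemma \ref{compat}, and Rie\ss's isomorphism then yields both $(\Pi_2^X)_\ast\iota^\ast=-\ide$ on $A^2_{(2)}(X_A)$ and $\iota^\ast=-\ide$ on $A^6_{(2)}(X_A)\to A^6(X_A)$.

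The main obstacle will be the Hodge--theoretic and motivic bookkeeping in the first two paragraphs: the birational involution $\tilde\iota$ on $(S_A)^{[3]}$ induced by $\iota_A$ admits no known geometric interpretation (as noted in remark \ref{anti}), so one has to argue purely correspondence--theoretically to check that the transported $\Gamma_{k,\ell}$ really do act as $-\ide$ on the transcendental $H^2(S_b)$. A secondary technical point is verifying that the compositions involving $\Phi$, the $\tilde\iota$--correspondence, and the various $\Xi_i,\Theta_i$ land inside the specific summands $A^2_{(2)}$ and $A^{2m}_{(2)}$ on which lemma \ref{compat} guarantees compatibility, rather than spilling into pieces not controlled by that lemma.
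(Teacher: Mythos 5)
Your proposal captures the high-level strategy (transfer to $(S_b)^3$ via Rie\ss's isomorphism and $\Phi$, decompose via the $\Theta_i,\Xi_i$, and apply Voisin's spread to kill the fibrewise homologically trivial correspondences $\Gamma_{k,\ell}$ on the Mukai family), and that part of the reasoning is sound. However, there is a genuine gap in how you conclude. The spread machinery, applied through propositions \ref{prod2}/\ref{prod3} and lemma \ref{compat}, naturally delivers only the \emph{projected} statement $(\Pi_{10}^X)_\ast\,\iota^\ast = -\ide$ on $A^6_{(2)}(X)$; the various $\Xi_i\circ\Theta_i$ compositions that you insert to reduce to $A^2(S_b)$ already incorporate a projection onto the grade--$2$ piece, so what comes back out is a statement about $\Pi_{10}^X\circ\iota^\ast$, not about $\iota^\ast$ itself. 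But the first claim in the theorem is the \emph{unprojected} one: $\iota^\ast=-\ide\colon A^6_{(2)}(X)\to A^6(X)$. To pass from the projected version to this, one must rule out that $\iota^\ast(b)$, for $b\in A^6_{(2)}(X)$, has components in $A^6_{(4)}(X)$ or $A^6_{(6)}(X)$. The paper does this by invoking the hard Lefschetz isomorphism (corollary \ref{hardepw}): writing $b=L^4\cdot a$ with $a\in A^2_{(2)}(X)$ and $L$ a $\iota$--invariant ample class, one gets $\iota^\ast(b)=L^4\cdot\iota^\ast(a)\in A^6_{(0)}(X)\oplus A^6_{(2)}(X)$, which forces the grade--$4$ and grade--$6$ components to vanish. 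Your proposal nowhere invokes corollary \ref{hardepw} (or any equivalent multiplicativity argument), so as written the first statement of theorem \ref{main} remains unproven.

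Relatedly, you propose to derive both statements ``simultaneously from a single application of proposition \ref{propvois},'' whereas the paper proves the $A^6$ statement first and then deduces the $A^2_{(2)}$ statement from it, again via corollary \ref{hardepw} and the $\iota$--invariance of a suitably chosen relatively ample line bundle pulled back from the quotient stack $\EE$. Your route could in principle reach the (projected) $A^2_{(2)}$ statement directly, but the missing hard Lefschetz input is not optional for the $A^6$ statement as stated. A minor additional remark: the cohomological input is cleaner to package the way the paper does, by bundling $\Gamma_{\iota_S}+\Delta$ inside $\Pi_{10}\circ(-)\circ\Pi_{10}$ so that the anti--symplectic hypothesis and the Lefschetz $(1,1)$ theorem together yield a relative correspondence that is fibrewise homologically trivial outright; your formulation (``acts as $-\ide$ on transcendental $H^2$, trivially on the algebraic part, then modify'') gets to the same place but only after additional bookkeeping that you do not spell out.
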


 \begin{proof} In a first reduction step, we show that it suffices to prove the first statement of theorem \ref{main}. Let 
   \[ \XX_{\Delta^{1,0}}\ \to\ \DD_{\Delta^{1,0}} \ \to\ \Delta^{1,0}\]
   be the families as in theorem \ref{ikkr}, so a fibre $D_A$ of $\DD_{\Delta^{1,0}}$ over $A\in \Delta^{1,0}$ is an EPW cube, and a fibre $X_A$ of $\XX_{\Delta^{1,0}}$ over $A$ is a double EPW cube birational to a Hilbert cube $K3^{[3]}$.
   Taking the restriction of a $P$--invariant ample line bundle on the Grassmannian, one can find a relatively ample line bundle $L_\EE\in A^1(\EE)=A^1_P(\EE)$, where $\EE=\DD_{\Delta^{1,0}}/P$ is as in proposition \ref{families}. Pulling back to $\XX_{\Delta^{1,0}}$, one obtains a $\iota$--invariant relatively ample line bundle in $ A^1(\XX_{\Delta^{1,0}})$. 
  
   Applying corollary \ref{hardepw} to $X=X_A$ for $A\in\Delta^1$ general, one obtains an isomorphism
   \begin{equation}\label{aniso}  \cdot (L\vert_X)^{4}\colon\ \ \ A^2_{(2)}(X)\ \xrightarrow{\cong}\ A^{6}_{(2)}(X)\ . \end{equation}
   But $L\vert_X$ is $\iota$--invariant by construction, and so 
   \[ \iota^\ast \bigl(  (L\vert_X)^4\cdot b)\bigr) = (L\vert_X)^4\cdot \iota^\ast(b)\ \ \ \hbox{in}\  A^6(X)\ \ \ \forall b\in A^2_{(2)}(X)\ .\]
    Suppose now the first statement of theorem \ref{main} holds true. Then we find that
    \[  (L\vert_X)^4\cdot \bigl(b+\iota^\ast(b)\bigr)=0 \ \ \ \hbox{in}\  A^6(X)\ \ \ \forall b\in A^2_{(2)}(X)\ .\]
    In view of the isomorphism (\ref{aniso}), this implies
    \[ \iota^\ast(b)=-b +b_0\ \ \ \hbox{in}\ A^2(X)\ ,\]
    where $b_0\in A^2_{(0)}(X)$ (and actually $b_0\in A^2_{(0),hom}(X)$, which is conjecturally $0$). This proves the second statement of theorem \ref{main}. It remains to prove the first statement of theorem \ref{main}.
    
        In view of Rie\ss's isomorphism, to prove the first statement it suffices to prove that
   \begin{equation}\label{onhilb} (\phi_b)^\ast (\iota_b)^\ast (\phi_b)_\ast=-\ide\colon\ \ \ A^6_{(2)}\bigl( (S_b)^{[3]}\bigr)\ \to\ A^6_{}\bigl( (S_b)^{[3]}\bigr)   \ ,\end{equation}
   where $S_b$ is a general degree $10$ $K3$ surface and $\phi_b\colon (S_b)^{[3]}\dashrightarrow X$ is the birational map.   
   
   Consider now the commutative square
    \[ \begin{array}[c]{ccc}
        (S_b)^{[3]} &\leftarrow &\wt{(S_b)^{3}}\\
        \downarrow&&\downarrow\\
        (S_b)^{(3)} & \leftarrow & (S_b)^3\\
        \end{array}\]
        (where vertical arrows are a composition of blow--ups of various partial diagonals). This gives rise to a correspondence $\Phi_b\in A^6((S_b)^{[3]}\times (S_b)^3)$, and the blow--up exact sequence implies that
        \[ (\Phi_b)^\ast (\Phi_b)_\ast=\ide\colon\ \ \ A^6_{}\bigl( (S_b)^{[3]}\bigr)\ \to\ A^6_{}\bigl( (S_b)^{[3]}\bigr)  \ .\]
        
    Therefore, we can work with the self--product $(S_b)^3$ rather than the Hilbert cube $(S_b)^{[3]}$: to prove (\ref{onhilb}), it suffices to prove that    
     \begin{equation}\label{oncube0}  (\Phi_b)^\ast (\phi_b)^\ast \iota^\ast (\phi_b)_\ast (\Phi_b)_\ast=-\ide\colon\ \ \ A^6_{(2)}\bigl( (S_b)^{3}\bigr)\ \to\ A^6_{(2)}\bigl( (S_b)^{3}\bigr)   \ ,\end{equation} 
         for general $b\in B$.    
         
      Thanks to the following compatibility lemma, things further simplify:
      
           \begin{lemma}\label{compati} Let $\TT^{[3]/\MM^0_6 }\to \MM^0_6$ be the ``universal Hilbert cube'' as in proposition \ref{families}, and let
        \[ \iota_T\colon\ \ \ \TT^{[3]/\MM^0_6 }\ \dashrightarrow\        \TT^{[3]/\MM^0_6 } \]
        be the birational involution induced by the generically $2:1$ rational map $\Psi\colon \TT^{[3]/\MM^0_6 }\dashrightarrow \EE$ of proposition \ref{families}.
        Let $\Gamma_{\iota_S}$ be the relative correspondence
          \[ \Gamma_{\iota_S}:=  {}^t \bar{\Gamma}_g\circ \bar{\Gamma}_{\iota_T}\circ \bar{\Gamma}_g\ \ \ \in\ A^6( \Ss^{3/B^0}\times_{B^0} \Ss^{3/B^0}) \ \]
          (where $g\colon \Ss^{3/B^0}\dashrightarrow \TT^{[3]/\MM^0_6}$ is the natural rational map).
          
        Then there is equality
         \[  \bigl((\Gamma_{\iota_S})_b\bigr){}_\ast = (\Phi_b)^\ast(\phi_b)^\ast \iota^\ast (\phi_b)_\ast(\Phi_b)_\ast\colon\ \ \ A^6\bigl( (S_b)^{3}\bigr)\ \to\ A^6_{}\bigl( (S_b)^{3}\bigr)   \ \]
         for general $b\in B$.
                \end{lemma}
                
             \begin{proof} One should remember that for general $b\in B$, there is a birational map
               \[ \phi_b\colon\ \ \ (S_b)^{[3]}\ \dashrightarrow\ X:=X_{f(b)}\ ,\]
               where $f(b)\in \MM_{\Delta^0}$ in the notation of proposition \ref{families}.
               Let
               \[ \begin{array}[c]{ccccc}
                 && Z &&\\
                 &{\scriptstyle p}\swarrow\ \ \ && \ \ \ \searrow{\scriptstyle q}\\
                S_b&& \stackrel{\phi_b}{\dashrightarrow}&& X\\
                \end{array}\]
                be an elimination of indeterminacy. 
                Let $\iota_Z\colon Z\to Z$ be the birational involution induced by $\iota$. There is a commutative diagram
            \begin{equation}\label{anothercom}\begin{array} [c]{ccccc}
                   (S_b)^{[3]} & \stackrel{p}{\leftarrow}& Z& \stackrel{q}{\to}& X\\
                    \ \ \  \downarrow{\scriptstyle \iota_{S_b}}&&\ \ \  \downarrow{\scriptstyle \iota_Z}&&\ \ \ \downarrow{\scriptstyle \iota}\\
                     (S_b)^{[3]} & \stackrel{p}{\leftarrow}& Z& \stackrel{q}{\to}& X\\
                     \end{array}\end{equation}
                     (here $\iota_{S_b}$ and $\iota_Z$ are birational maps, not morphisms).
                     
       For general $b\in B$, the restriction $(\bar{\Gamma}_{\iota_T})_b$ is just the closure of the graph of the rational involution $\iota_{S_b}\colon S_b\dashrightarrow S_b$ (induced by $\iota$), and so
             \begin{equation}\label{lh}    \bigl((\Gamma_{\iota_S})_b\bigr){}_\ast = (\Phi_b)^\ast(\iota_{S_b})^\ast (\Phi_b)_\ast =(\Phi_b)^\ast p_\ast (\iota_Z)^\ast p^\ast (\Phi_b)_\ast \colon\ \ \ 
             A^i\bigl((S_b)^3\bigr)\ \to\ 
                     A^i\bigl((S_b)^3\bigr)\ .\end{equation}
        As for the right--hand--side in lemma \ref{compati}, since $\iota^\ast= q_\ast (\iota_Z)^\ast q^\ast$ and
        $(\phi_b)^\ast=p_\ast q^\ast$ (and likewise $(\phi_b)_\ast= q_\ast p^\ast$), we find that
        \[ (\Phi_b)^\ast(\phi_b)^\ast \iota^\ast (\phi_b)_\ast(\Phi_b)_\ast= (\Phi_b)^\ast p_\ast q^\ast q_\ast(\iota_Z)^\ast q^\ast q_\ast p^\ast(\Phi_b)_\ast
                \colon\ \ \ A^i\bigl( (S_b)^{3}\bigr)\ \to\ A^i_{}\bigl( (S_b)^{3}\bigr)\ .   \ \]
         Since $q\colon Z\to X$ is birational, we have that $q^\ast q_\ast=\ide\colon A^6(Z)\to A^6(Z)$, and so for $i=6$ the above boils down to
           \begin{equation}\label{rh} (\Phi_b)^\ast(\phi_b)^\ast \iota^\ast (\phi_b)_\ast(\Phi_b)_\ast= (\Phi_b)^\ast p_\ast (\iota_Z)^\ast  p^\ast(\Phi_b)_\ast
                \colon\ \ \ A^6\bigl( (S_b)^{3}\bigr)\ \to\ A^6_{}\bigl( (S_b)^{3}\bigr)\ .   \ \end{equation}      
                Comparing equations (\ref{lh}) and (\ref{rh}), we ascertain that we have proven the lemma.
     \end{proof}   
             
        Thanks to lemma \ref{compati}, we conclude that in order to prove (\ref{oncube0}), it suffices to prove that
        \begin{equation}\label{oncube1}       \bigl((\Gamma_{\iota_S})_b\bigr){}_\ast = -\ide\colon\ \ \ A^6_{(2)}\bigl( (S_b)^{3}\bigr)\ \to\ A^6_{}\bigl( (S_b)^{3}\bigr)   \ ,\end{equation} 
         for general $b\in B$.

         We now introduce one further reduction step: we claim that in order to prove statement (\ref{oncube1}), it suffices to prove that
         \begin{equation}\label{oncube} (\Pi_{10}^{(S_b)^3})_\ast  \bigl((\Gamma_{\iota_S})_b\bigr){}_\ast  =-\ide\colon\ \ \ A^6_{(2)}\bigl( (S_b)^{3}\bigr)\ \to\ A^6_{(2)}\bigl( (S_b)^{3}\bigr)   \ ,\end{equation} 
         for general $b\in B$.
         
   To prove this claim, we observe that equation (\ref{oncube}) implies (by composing on both sides) that      
   \[  (\phi_b)_\ast  (\Phi_b)_\ast (\Pi_{10}^{(S_b)^3})_\ast     \bigl((\Gamma_{\iota_S})_b\bigr){}_\ast  (\Phi_b)^\ast (\phi_b)^\ast    = -\ide\colon\ \ \ 
        A^6(X)\ \to\ A^6(X)\ ,\]
        for general $b\in B$. Here $X=X_A$ is the double EPW cube such that 
          \[\phi_b\colon\ \ \  (S_b)^{[3]}\ \dashrightarrow\ X\] 
          is birational.
        Using lemma \ref{compati}, this implies that also
     \[ (\phi_b)_\ast  (\Phi_b)_\ast (\Pi_{10}^{(S_b)^3})_\ast  (\Phi_b)^\ast (\phi_b)^\ast \iota^\ast (\phi_b)_\ast (\Phi_b)_\ast (\Phi_b)^\ast (\phi_b)^\ast   =-\ide\colon\ \ \  A^6(X)\ \to\ A^6(X)\ ,\]
      for general $X=X_A$ with $A\in\Delta^1$.
       This simplifies to
        \begin{equation}\label{step} (\phi_b)_\ast  (\Phi_b)_\ast (\Pi_{10}^{(S_b)^3})_\ast  (\Phi_b)^\ast (\phi_b)^\ast \iota^\ast   =-\ide\colon\ \ \       
        A^6(X)\ \to\ A^6(X)\ .\end{equation}
     But 
       \[     (\Phi_b)_\ast (\Pi_{10}^{(S_b)^3})_\ast  =(\Pi_{10}^{(S_b)^{[3]}})_\ast (\Phi_b)_\ast \colon\ \ \ A^6((S_b)^3)\ \to\ A^6((S_b)^{[3]}) \]
       (lemma \ref{compat} ),
       and 
       \[  (\phi_b)_\ast           (\Pi_{10}^{(S_b)^{[3]}})_\ast  = (\Pi_{10}^X)_\ast (\phi_b)_\ast\colon\ \ \ A^i((S_b)^{[3]})\ \to\ A^i(X) \]
       (since Rie\ss's isomorphism is an isomorphism of bigraded rings, cf. lemma \ref{hk}). Therefore, equation (\ref{step}) further simplifies to
       \[   (\Pi_{10}^X)_\ast \iota^\ast =-\ide\colon\ \ \ A^6_{(2)}(X)\ \to\ A^6_{(2)}(X)\ .\]
       This means that any $b\in  A^6_{(2)}(X)$ satisfies
       \begin{equation}\label{46} \iota^\ast(b) = -b +b_4 +b_6\ \ \ \hbox{in}\ A^6(X)\ ,\end{equation}
       where $b_j\in A^6_{(j)}(X)$ (NB: $\iota^\ast(b)$ cannot have a component in $A^6_{(0)}(X)$ since $\iota^\ast(b)\in A^6_{hom}(X)$.)
       On the other hand, using corollary \ref{hardepw} (just as at the beginning of this proof) we can write $b=L^4\cdot a$ where $a\in A^2_{(2)}(X)$ and $L$ is a $\iota$--invariant ample line bundle. This implies that
       \[ \iota^\ast(b)=\iota^\ast(L^4\cdot a)=\iota^\ast(L^4)\cdot \iota^\ast(a)= L^4\cdot \iota^\ast(a)\ \ \ \hbox{in}\ A^6(X)\ .\]
       But $\iota^\ast(a)\in A^2(X)=A^2_{(0)}(X)\oplus A^2_{(2)}(X)$ and so (exploiting the fact that $A^\ast_{(\ast)}(X)$ is a bigraded ring, thanks to lemma \ref{mck}) we find that
       \begin{equation}\label{02} \iota^\ast(b)\ \ \in A^6_{(0)}(X)\oplus A^6_{(2)}(X)\ .\end{equation}
       Comparing equations (\ref{02}) and (\ref{46}), we see that we must have $b_4=b_6=0$, and so
       \[ \iota^\ast(b)=-b\ \ \ \hbox{in}\ A^6(X)\ \ \ \forall\ b\in A^6_{(2)}(X)\ ,\]
       as claimed. This proves the claim; it now remains to prove statement (\ref{oncube}).

                In order to prove statement (\ref{oncube}), we rely once again on the machinery of ``spread'' of cycles in a family \cite{V0}, \cite{V1}; this is very similar to the argument proving theorem \ref{hard}.
  We consider the family
   \[ \Ss^{3/B}\ \to\ B\ ,\]
   where $\Ss\to B$ is (once more) the universal family of degree $10$ $K3$ surfaces (remark \ref{family}).

     Let us define a relative correspondence
     \[ \Gamma:=   \Pi_{10}^{\Ss^{3/B}}\circ \bigl(   \Gamma_{\iota_S}  +\Delta_{\Ss^{3/B}}   \bigr) 
         \circ\Pi_{10}^{\Ss^{3/B}} \ \ \ \in\ A^6\bigl( \Ss^{3/B}\times_B \Ss^{3/B}\bigr)\ .\]
      Clearly, statement (\ref{oncube}) that we want to prove is equivalent to the statement
     \begin{equation}\label{noaction}    \bigl( \Gamma_b\bigr){}_\ast=0\colon\ \ \ A^6_{hom}\bigl( (S_b)^{3}\bigr)\ \to\ A^6_{}\bigl( (S_b)^{3}\bigr)
         \ \ \ \hbox{for\ general\ $b\in B$}\ .\end{equation}
        (Here, as before, for any relative correspondence $\Gamma$ we use the notation $\Gamma_b$ to indicate the restriction of $\Gamma$ to the fibre over 
        $b\in B$.)
                   
  The homological input that we have at our disposition is that the involution $\iota=\iota_A$ of $X=X_A$ (and hence the induced involution of $(S_b)^{[3]}$) is anti--symplectic (remark \ref{anti}), and so
   \[   \bigl( \Gamma_b\bigr){}_\ast=0\colon\ \ \ H^{6,4}\bigl( (S_b)^3\bigr)\ \to\ H^{6,4}\bigl( (S_b)^3\bigr)\ \ \  \hbox{for\ general\ $b\in B$}\ .\]    
   Using the Lefschetz $(1,1)$--theorem, this implies that for general $b\in B$, there exist a curve $V_b$ and a divisor $W_b$ inside $(S_b)^3$, and a cycle $\gamma_b\in A_6(W_b\times V_b)$ such that
    \[ \Gamma_b +\gamma_b=0\ \ \ \hbox{in}\ H^{12}\bigl( (S_b)^3\times (S_b)^3\bigr)\ .\]
    Applying the Hilbert schemes argument \cite[Proposition 3.7]{V0}, one can find a curve $\VV$ and a divisor $\WW$ inside $\Ss^{3/B}$, and a cycle $\gamma$ supported on $\WW\times_B \VV$ such that
    \begin{equation}\label{no} \bigl(  \Gamma + \gamma\bigr){}_b=0\ \ \ \hbox{in}\ H^{12}\bigl( (S_b)^3\times (S_b)^3\bigr)\ \ \ \forall b\in B\ .\end{equation}  
    
     Let us now consider a modified relative correspondence
     \[ \Gamma_1:= \Pi_{10}^{\Ss^{3/B}}\circ (\Gamma+\gamma)\circ \Pi_{10}^{\Ss^{3/B}}\ \ \ \in\ A^6\bigl( \Ss^{3/B}\times_B \Ss^{3/B}\bigr)\ .\]
     Since $\Pi_{10}^{(S_b)^3}$ is idempotent for all $b\in B$, there is a fibrewise equality
     \[  (\Gamma_1)_b = (\Gamma+\gamma^\prime)_b\  \ \ \hbox{in}\ A^6\bigl( (S_b)^3\times (S_b)^3\bigr)\ \ \ \forall b\in B\ ,\]
     where $\gamma^\prime$ is (just like $\gamma$) a cycle supported on $\WW\times_B \VV$.
     For a general fibre, the restriction $(\gamma^\prime)_b$ will be supported on (divisor)$\times$(curve) and as such will not act on $A^6((S_b)^3)$.  It follows that
     \begin{equation}\label{same1}   \bigl( (\Gamma_1)_b\bigr){}_\ast = (\Gamma_b)_\ast\colon\ \ \ A^6\bigl( (S_b)^3\bigr)\ \to\  A^6\bigl( (S_b)^3\bigr)\ \ \ \hbox{for\ $b\in B$\ general}\ .\end{equation}     
     On the other hand, in view of proposition \ref{prod2}, there is a fibrewise equality of action
      \[ \bigl((\Gamma_1)_b\bigr){}_\ast = 
       \Bigl( \bigl( (\sum_{k=1}^3 \Xi_k\circ\Theta_k) \circ (\Gamma+\gamma)\circ       (\sum_{\ell=1}^3 \Xi_\ell\circ\Theta_\ell)  \bigr){}_b\Bigr){}_\ast\colon\ \ \ 
       A^6\bigl( (S_b)^3\bigr)\ \to\  A^6\bigl( (S_b)^3\bigr) \ \ \ \forall b\in B\  .\]
      That is, we have equality
      \begin{equation}\label{10}    \bigl( (\Gamma_1)_b \bigr){}_\ast= \Bigl( \bigl( \sum_{k=1}^3 \sum_{\ell=1}^3  \Xi_k\circ \Gamma_{k,\ell}\circ \Theta_\ell\bigr){}_b\Bigr){}_\ast\colon\ \ \ A^6\bigl( (S_b)^3\bigr)\ \to\ A^6\bigl( (S_b)^3\bigr)\ \ \ \forall b\in B ,\end{equation}
      where we have defined
       \[ \Gamma_{k,\ell}:= \Theta_k \circ(\Gamma+\gamma)\circ \Xi_\ell\ \ \ \in A^2(\Ss\times_B \Ss)\ \ \ 1\le k,\ell\le 3\ .\]
       
    We observe that equation (\ref{no}) implies that
    \[  (\Gamma_{k,\ell})_b\ \ \ \in A^2_{hom}(S_b\times S_b)\ \ \ \forall b\in B\ , 1\le k,\ell\le 3\ .\]
    But then, applying proposition \ref{propvois} to the relative correspondence $\Gamma_{k,\ell}$ we may conclude there exists $\delta_{k,\ell}\in A^2(G\times G)$ (where $G$ is the Grassmannian of lines in $\PP^4$) such that
    \[      (\Gamma_{k,\ell})_b +(\delta_{k,\ell})_b=0\ \ \ \in A^2_{}(S_b\times S_b)\ \ \ \forall b\in B  \ .\]
    Since the Grassmannian has trivial Chow groups, the correspondence $(\delta_{k,\ell})_b$ acts trivially on $A^\ast_{hom}(S_b)$, and so
    \[         \bigl( (\Gamma_{k,\ell})_b     \bigr){}_\ast =0\colon\ \ \ A^\ast_{hom}(S_b)\ \to\ A^\ast(S_b)\ \ \ \forall b\in B\ ,\ \ \ 1\le k,\ell\le 3\ .\]
    Plugging this in equation (\ref{10}), we find that
    \[ \bigl( (\Gamma_1)_b\bigr){}_\ast=0\colon\ \ \ A^6_{hom}\bigl( (S_b)^3\bigr)\ \to\  A^6_{}\bigl( (S_b)^3\bigr)  \ \ \ \forall b\in B\ .\]  
       Returning to equality (\ref{same1}), this implies that
    \[   (  \Gamma_b)_\ast=0\colon\ \ \ A^6_{hom}\bigl( (S_b)^3\bigr)\ \to\ A^6_{}\bigl( (S_b)^3\bigr)\ \ \ \hbox{for\ general\ $b\in B$}\ ,\]
    which is exactly statement (\ref{noaction}) that we needed to prove. The proof of theorem \ref{main} is now complete.
     \end{proof}

 \section{Some corollaries}
 
 \begin{corollary}\label{cor}
 Let $D=D_2^A$ be an EPW cube for $A\in\Delta^1$ general (where $\Delta^1\subset LG^1_\nu$ is the divisor of theorem \ref{ikkr}). 
  
  \noindent
  (\rom1) Let $a\in A^6(D)$ be a $0$--cycle which is either in the image of the intersection product map
    \[ A^2(D)\otimes A^2(D)\otimes A^2(D)\ \to\ A^6(D)\ ,\] 
    or in the image of the intersection product map
    \[ A^3(D)\otimes A^2(D)\otimes A^1(D)\ \to\ A^6(D)\ .\]
    Then $a$ is rationally trivial if and only if $a$ has degree $0$.
    
   \noindent
   (\rom2) Let $a\in A^5(D)$ be a $1$--cycle which is in the image of the intersection product map
    \[ A^2(D)\otimes A^2(D)\otimes A^1(D)\ \to\ A^5(D)\ .\]
    Then $a$ is rationally trivial if and only if $a$ is homologically trivial. 
  \end{corollary}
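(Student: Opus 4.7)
The plan is to transfer both statements to the smooth double cover $X = X_A$ via the projection $\pi\colon X \to D$, and then exploit the bigraded ring structure $A^\ast_{(\ast)}(X)$ from corollary \ref{mck} together with the constraints from theorem \ref{main}. Since $D = X/\langle\iota\rangle$ is a projective quotient variety, proposition \ref{quot} yields a ring isomorphism $\pi^\ast\colon A^\ast(D) \xrightarrow{\cong} A^\ast(X)^\iota$ (with inverse $\tfrac12 \pi_\ast$), so it suffices to prove the analogous injectivity assertions for $\iota$-invariant pullbacks on $X$.

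The first technical step is to identify $A^2(X)^\iota \subseteq A^2_{(0)}(X)$. Given $b \in A^2(X)^\iota$, decompose $b = b_0 + b_2$ with $b_j \in A^2_{(j)}(X)$ (from corollary \ref{mck}), and choose a $\iota$-invariant ample line bundle $L$ on $X$ (obtained by pulling back an ample class from the EPW cube). Then $L^4 b = L^4 b_0 + L^4 b_2 \in A^6_{(0)}(X) \oplus A^6_{(2)}(X)$ is $\iota$-invariant. The first line of theorem \ref{main} gives $\iota^\ast(L^4 b_2) = -L^4 b_2$, while $L^4 b_0 \in A^6_{(0)}(X) = \QQ \cdot \oo_X$ is $\iota$-fixed (since $\oo_X$, being the canonical Beauville--Voisin class, is preserved by any automorphism). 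Matching with $\iota^\ast(L^4 b) = L^4 b$ forces $L^4 b_2 = 0$, and corollary \ref{hardepw} then gives $b_2 = 0$.

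For part (\rom1), using $A^1(X) = A^1_{(0)}(X)$ and the (conjectural, but valid for Hilbert schemes of $K3$) vanishing $A^3_{(j)}(X)=0$ for $j<0$, we have $A^3(X) = A^3_{(0)} \oplus A^3_{(2)}$. Write $a = \alpha\beta\gamma \in A^6(D)$. Multiplicativity of the bigrading shows $\pi^\ast(\beta)\cdot \pi^\ast(\gamma) \in A^3_{(0)}(X)$ in both cases, and hence $\pi^\ast(a) \in A^6_{(0)}(X) \oplus A^6_{(2)}(X)$: in the triple $A^2$ case it lands directly in $A^6_{(0)}(X)$, while in the $A^3 \otimes A^2 \otimes A^1$ case the possible $A^6_{(2)}$ contribution comes from the $A^3_{(2)}$-component of $\pi^\ast(\alpha)$. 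Now repeat the argument of the previous paragraph, applied directly to $\pi^\ast(a)$ (its $A^6_{(0)}$-piece lies in $\QQ\cdot \oo_X$ and is $\iota$-fixed, while its $A^6_{(2)}$-piece satisfies $\iota^\ast = -\ide$): $\iota$-invariance kills the $A^6_{(2)}$-component, leaving $\pi^\ast(a) \in \QQ \cdot \oo_X$. Since $\deg \pi^\ast(a) = 2\deg a$, we conclude $\pi^\ast(a) = 0 \Leftrightarrow \deg a = 0$.

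For part (\rom2), write $a = \beta\beta'\gamma$ and apply the same multiplicativity to obtain $\pi^\ast(a) \in A^2_{(0)}\cdot A^2_{(0)}\cdot A^1_{(0)} \subseteq A^5_{(0)}(X)$. It then suffices to invoke the Beauville--Voisin-type statement that $A^5_{(0)}(X) \cap A^5_{hom}(X) = 0$ on the subring generated by divisor classes and $A^2_{(0)}$-classes, which holds for $(S_A)^{[3]}$ by \cite{V6} and transports to $X$ via lemma \ref{hk}. This gives rational triviality from homological triviality.

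The main obstacles are the two ``transport'' assertions used implicitly: (a) that $\iota^\ast \oo_X = \oo_X$, i.e. the Beauville--Voisin class is preserved by the covering involution, and (b) the injectivity of the cycle class map on the relevant part of $A^5_{(0)}(X)$ needed for (\rom2). Both rely on the birational invariance of the Beauville--Voisin/MCK structure (Rie\ss{} \cite{Rie} and lemma \ref{hk}) and on Voisin's results \cite{V6} for Hilbert schemes of $K3$ surfaces; assembling the precise forms from the literature is the delicate point, while the algebraic manipulations on $A^\ast_{(\ast)}(X)$ are essentially formal given theorem \ref{main} and corollary \ref{hardepw}.
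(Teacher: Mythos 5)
Your proposal follows the same route as the paper: reduce via $p^\ast$ to the double cover $X$, show $p^\ast A^2(D)$ (equivalently $A^2(X)^\iota$) lands in $A^2_{(0)}(X)$, then use the bigraded ring structure and the $\iota$-action on the relevant pieces to place $p^\ast(a)$ in $A^6_{(0)}(X)$ (resp.\ $A^5_{(0)}(X)$) and conclude. One small but genuine variation: to get $A^2(X)^\iota\subset A^2_{(0)}(X)$ you multiply the decomposition $b=b_0+b_2$ directly by $L^4$ and invoke only the first line of theorem~\ref{main} together with the hard Lefschetz injectivity; the paper instead proves an intermediate sublemma ($\iota^\ast A^2_{(0)}\subset A^2_{(0)}$ via the same $L^4$ trick) and then uses the second line of theorem~\ref{main} on the $A^2_{(2)}$-component. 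Your version is marginally more economical since it never invokes the second statement of theorem~\ref{main}. Be aware that the assertion ``$\oo_X$ is preserved by any automorphism'' deserves a word of justification: the clean argument is that $A^6_{(0)}(X)$ is one-dimensional and generated by $L^6$ for \emph{any} ample $L$, so choosing $L$ of the form $p^\ast(L_D)$ gives $\iota^\ast L^6=L^6$ -- this is exactly what the paper's lemma on $A^6_{(0)}(X)$ does, and the appeal to ``canonicality'' alone is not a proof. Finally, for part~(\rom2) the paper invokes the full injectivity of $A^5_{(0)}(X)\to H^{10}(X)$ (transported from $S^{[3]}$ via Rie\ss), whereas you restrict the claim to a subring; both suffice, the paper's version is cleaner.
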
   
 
 \begin{proof} We first establish some lemmas:
 
 \begin{lemma}\label{60} Let $A\in\Delta^1$ be general, and let $X=X_A$ be the corresponding double EPW cube. Let $\iota=\iota_A$ be the covering involution.
 Then
   \[ \iota^\ast=\ide\colon\ \ \ A^6_{(0)}(X)\ \to\ A^6(X)\ .\]
   \end{lemma}
   
   \begin{proof} The subgroup $A^6_{(0)}(X)$ is generated by $L^6$, where $L$ is any ample divisor. Taking $L$ an ample divisor of the form $L=p^\ast(L_D)$ where $L_D$ is ample on $D$, we see that the lemma must be true.
   \end{proof}

 \begin{lemma}\label{in0} Let $A\in \Delta^1$ be general, and let $X=X_A$ and $D=D_A$ be the corresponding double EPW cube, resp. EPW cube.
 Let $p\colon X\to D$ be the quotient morphism. We have
    \[ p^\ast A^2(D)\ \subset\ A^2_{(0)}(X)\ .\]
   \end{lemma}
   
   \begin{proof} By construction, there is an inclusion
         \[ p^\ast A^2(D)\ \subset\ A^2(X)^{\iota}\ ,\]
         where $\iota=\iota_A\in\aut(X)$ is the covering involution.
    
    Given $b\in A^2(D)$, let us write
     \[ p^\ast(b) = c_0 + c_2\ \ \ \in A^2_{(0)}(X^\prime)\oplus A^2_{(2)}(X^\prime)\ .\]
     Applying $\iota$, we find
       \begin{equation}\label{thishere}   \iota^\ast p^\ast(b) = p^\ast(b)=c_0 + c_2 \ \ \ \in A^2_{(0)}(X^\prime)\oplus A^2_{(2)}(X^\prime)\ .\end{equation}
      On the other hand, we have       
            \begin{equation}\label{thistoo} \iota^\ast p^\ast(b) = \iota^\ast(c_0) +\iota^\ast(c_2) =\iota^\ast(c_0) +d_0 - c_2\ \ \ \in A^2_{(0)}(X)\oplus A^2_{(2)}(X)\ ,
            \ \end{equation}
     where we have used sublemma \ref{preserve} below to obtain that $\iota^\ast(c_0)\in A^2_{(0)}(X)$, and theorem \ref{main} to obtain that $\iota^\ast(c_2)=-c_2+d_0$ for some $d_0\in A^2_{(0)}(X)$.  
     Comparing expressions (\ref{thishere}) and (\ref{thistoo}), we find
      \[  \iota^\ast(c_0) +d_0= c_0\ \ \ \hbox{in}\ A^2_{(0)}(X)\ ,\ \ \ -c_2=c_2\ \ \ \hbox{in}\ A^2_{(2)}(X)\ ,\]
      proving lemma \ref{in0}.

      \begin{sublemma}\label{preserve} Set--up as above. 
      Let $b\in A^2(D)$, and write
       \[ p^\ast(b) = c_0 + c_2\ \ \ \in A^2_{(0)}(X)\oplus A^2_{(2)}(X)\ .\]   
       Then
            \[ \iota^\ast (c_0)\in\  A^2_{(0)}(X)\ .\]
  \end{sublemma}
  
  \begin{proof} 
   Suppose
   \[ \iota^\ast(c_0) = d_0 + d_2\ \ \ \hbox{in}\ A^2(X)\ ,\]  
   with $d_0\in A^2_{(0)}(X)$ and $d_2\in A^2_{(2)}(X)$.
  
  Let $L\in A^1(X)$ be a $\iota$--invariant ample divisor as in the proof of theorem \ref{main}.  
   The $0$--cycle $c_0\cdot  L^4$ is in $A^6_{(0)}(X)$, and so (using lemma \ref{60}) we have
   \begin{equation}\label{here} \iota^\ast (c_0\cdot L^4) = c_0\cdot L^4\ \ \ \hbox{in}\ A^6_{(0)}(X)\ \end{equation}
        
   On the other hand, we have
   \begin{equation}\label{andhere}   \iota^\ast (c_0\cdot L^4)  =  \iota^\ast(c_0)\cdot \iota^\ast(L^4)=  (d_0+ d_2)\cdot L^4 = d_0\cdot L^4 + d_2\cdot L^4 \ \ \ \hbox{in}\ A^6(X) \ .\end{equation}
       
   Since $d_0\cdot L^4\in A^6_{(0)}(X)$ and $d_2\cdot L^4\in A^6_{(2)}(X)$, comparing expressions (\ref{andhere}) and (\ref{here}), we see that we must have
     \[  d_0\cdot L^4 = c_0\cdot L^4\ \ \ \hbox{in}\ A^6_{(0)}(X)\ ,\ \ \ d_2\cdot L^4=0\ \ \ \hbox{in}\ A^6_{(2)}(X)\ .\]
     Using the injectivity part of corollary \ref{hardepw}, this implies that 
      \[   d_2=0\ \ \ \hbox{in}\ A^2(X)\ .\] 
   This proves sublemma \ref{preserve}.
      \end{proof}

   \end{proof} 
 
 Let us now prove corollary \ref{cor}(\rom1). Suppose first $a\in A^6(D)$ is a $0$--cycle in the image of
   \[  A^2(D)\otimes A^2(D)\otimes A^2(D)\ \to\ A^6(D)\ .\]  
   Then $p^\ast(a)\in A^6(X)$ is in the image of
   \[ p^\ast A^2(X)\otimes p^\ast A^2(X)\otimes p^\ast A^2(X)\ \to\ A^6(X)\ .\]
   In view of lemma \ref{in0}, this is contained in the image of
    \[ A^2_{(0)}(X)\otimes  A^2_{(0)}(X)\otimes  A^2_{(0)}(X)\ \to\ A^6(X)\ ,\] 
    which is $A^6_{(0)}(X)$. It follows that $p^\ast(a)$ is rationally trivial if and only if $p^\ast(a)$ has degree $0$. Since $a=2p_\ast p^\ast(a)$, the statement for $a$ follows.
    
  Next, suppose $a\in A^6(D)$ is a $0$--cycle in the image of
    \[ A^3(D)\otimes A^2(D)\otimes A^1(D)\ \to\ A^6(D)\ .\]
    Then $p^\ast(a)\in A^6(X)$ is in the image of
   \[ p^\ast A^3(X)\otimes p^\ast A^2(X)\otimes p^\ast A^1(X)\ \to\ A^6(X)\ .\]
   In view of lemma \ref{in0} and corollary \ref{mck}, this is contained in the image of
   \[  \bigl( A^3_{(0)}(X)\oplus A^3_{(2)}(X)\bigr) \otimes A^2_{(0)}(X)\otimes A^1_{(0)}(X)\ \to\ A^6(X)\ ,\]
   and so we find that
   \[ p^\ast(a)\ \ \in\ A^6_{(0)}(X)\oplus A^6_{(2)}(X)\ .\]
   On the other hand, $p^\ast(a)$ is $\iota$--invariant, and we have
    \[ \Bigl(A^6_{(0)}(X)\oplus A^6_{(2)}(X)\Bigr)\cap A^6(X)^\iota = A^6_{(0)}(X) \]    
    (in view of lemma \ref{60} and theorem \ref{main}). Therefore we must have
   \[  p^\ast(a)\ \ \in\ A^6_{(0)}(X)\ ,\]
   from which the conclusion follows as above.
   
   The proof of corollary \ref{cor}(\rom2) is similar: let $a\in A^5(D)$ be a $1$--cycle in the image of
     \[ A^2(D)\otimes A^2(D)\otimes A^1(D)\ \to\ A^5(D)\ .\]
   Then $p^\ast(a)$ is in the image of
   \[   A^2_{(0)}(X)\otimes  A^2_{(0)}(X)\otimes A^1_{(0)}(X)\ \to\ A^5(X)\ ,\]
   which is contained in $A^5_{(0)}(X)$. But $A^5_{(0)}(X)$ injects into cohomology (this follows from Rie\ss's isomorphism \cite{Rie}, combined with the corresponding statement for
   $A^5_{(0)}(S^{[3]})$ which is noted in \cite[Introduction]{V6}).
   \end{proof}

 The argument proving corollary \ref{cor} actually proves a more general statement:
 
 \begin{corollary}\label{cor2} Let $X$ be a variety of dimension $2m$ of the form
   \[ X= D_1\times\cdots \times D_r\times K_1\times \cdots\times K_s \times X_1\times\cdots\times X_t  \ ,\]
   where each $D_j$ is an EPW cube $D_2^{A_j}$ for $A_j\in\Delta^{1,0}$, and each $K_j$ is a generalized Kummer variety, and each $X_j$ is a Hilbert scheme 
   $(S_j)^{[m_j]}$ where $S_j$ is a $K3$ surface.
   
   Let $E^\ast(X)\subset A^\ast(X)$ be the subring generated by (pullbacks of) 
    \[  A^1(D_j)\ ,\ A^2(D_j)\ ,\ A^1_{(0)}(K_j)\ , \ c_r(K_j)\ ,\ A^1(X_j)\ ,\ c_r(X_j)\ ,\]
    where $c_r()\in A^r()$ denote the Chern classes.
    Then the cycle class map
    \[ E^i(X)\ \to\ H^{2i}(X) \]
    is injective for $i\ge 2m-1$.
   \end{corollary}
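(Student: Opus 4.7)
The plan is to mimic the proof of corollary \ref{cor}, passing to the cover obtained by replacing each EPW cube $D_j$ by its double cover $\tilde D_j := X_{A_j}$. Denoting by $p_j\colon\tilde D_j\to D_j$ the quotient, form
\[ \tilde X\ :=\ \tilde D_1\times\cdots\times\tilde D_r\times K_1\times\cdots\times K_s\times X_1\times\cdots\times X_t\ ,\]
with induced finite quotient $p\colon\tilde X\to X$. By corollary \ref{mck} for the $\tilde D_j$, the MCK results of \cite{FTV} for generalized Kummer varieties, theorem \ref{charles} for the $X_j$, together with \cite[Theorem 8.6]{SV}, $\tilde X$ inherits a product MCK decomposition, hence a bigrading $A^\ast_{(\ast)}(\tilde X)$ compatible with the intersection product.

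The heart of the argument is the inclusion
\[ p^\ast E^\ast(X)\ \subset\ A^\ast_{(0)}(\tilde X)\ .\]
Multiplicativity of MCK makes $A^\ast_{(0)}(\tilde X)$ a subring, so it suffices to check the claim on generators. For the EPW factors, lemma \ref{in0} gives $p_j^\ast A^2(D_j)\subset A^2_{(0)}(\tilde D_j)$, and $p_j^\ast A^1(D_j)\subset A^1_{(0)}(\tilde D_j)$ holds automatically from $A^1(\tilde D_j) = A^1_{(0)}(\tilde D_j)$ (via lemma \ref{hk} and theorem \ref{charles}). For the Kummer factors, $A^1_{(0)}(K_j)$ is by definition in $A^\ast_{(0)}$ and $c_r(K_j)\in A^r_{(0)}(K_j)$ by \cite{FTV}; for the Hilbert scheme factors, $A^1(X_j) = A^1_{(0)}(X_j)$ by theorem \ref{charles} and $c_r(X_j)\in A^r_{(0)}(X_j)$ by \cite{V6}.

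Moreover, every generator of $p^\ast E^\ast(X)$ is pulled back from a single factor of $\tilde X$, so $p^\ast E^\ast(X)$ actually lies in the external product subring $\bigotimes_\ell A^\ast_{(0)}(Y_\ell)$, where $\tilde X = Y_1\times\cdots\times Y_k$ with each $Y_\ell$ hyperk\"ahler of dimension $2n_\ell$ and $\sum n_\ell = m$. On this external product, the cycle class map
\[ \bigotimes_\ell A^\ast_{(0)}(Y_\ell)\ \to\ \bigotimes_\ell H^\ast(Y_\ell)\ \hookrightarrow\ H^\ast(\tilde X) \]
is injective in codimensions $\ge 2m-1$ by K\"unneth, reducing to the factor--wise injectivity of $A^{2n_\ell-1}_{(0)}(Y_\ell)\to H^{4n_\ell-2}(Y_\ell)$ and $A^{2n_\ell}_{(0)}(Y_\ell) = \QQ\cdot[\oo_{Y_\ell}]\to H^{4n_\ell}(Y_\ell)$. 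These factor--wise statements are known: for Hilbert schemes of $K3$ surfaces by \cite[Introduction]{V6}, for generalized Kummer varieties by \cite{FTV}, and for the $\tilde D_j$ by transport from $S^{[3]}$ via Rie\ss's isomorphism \cite{Rie}, exactly as in the proof of corollary \ref{cor}(ii).

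The corollary then follows by descent: for $a\in E^i(X)$ with $i\ge 2m-1$ and $[a] = 0$ in $H^{2i}(X)$, the pullback $p^\ast a$ lies in $A^i_{(0)}(\tilde X)$ and is homologically trivial, hence zero by the above; applying $(\deg p)^{-1}p_\ast$ then yields $a = 0$. The main obstacle is the first step --- the inclusion $p^\ast E^\ast(X)\subset A^\ast_{(0)}(\tilde X)$ --- where lemma \ref{in0} plays the crucial role for the EPW factors and the placement of Chern classes in $A^\ast_{(0)}$ requires appealing to the non--trivial results of \cite{V6}, \cite{FTV}. The K\"unneth injectivity step in top codimension is then routine once the correct factor--wise results are invoked.
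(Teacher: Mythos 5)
Your proposal is correct and takes essentially the same route as the paper: pass to the cover $\tilde X = Y$ replacing each EPW cube by its double cover, use the product MCK to show $p^\ast E^\ast(X)\subset A^\ast_{(0)}(\tilde X)$ (with lemma \ref{in0} handling $A^2(D_j)$ and \cite{FTV}, \cite{V6} handling the Chern classes), and conclude from injectivity of the cycle class map on $A^i_{(0)}$ in codimensions $\ge 2m-1$. Your explicit K\"unneth reduction of the final injectivity to the factor-wise statements is slightly more detailed than the paper's terse assertion, and is a welcome clarification.
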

   
   \begin{proof} Let us consider the variety
   \[ Y:= Y_1\times\cdots\times Y_r\times K_1\times \cdots\times K_s \times X_1\times\cdots\times X_t  \ ,\]
   where $p_j\colon Y_j\to D_j$ is the double cover from the double EPW cube $Y_j$ to the EPW cube $D_j$, and the finite morphism
   \[ p\colon\ \ \ Y\ \to\ X\ .\]
   The variety $Y$ has an MCK decomposition. (Indeed, the varieties $Y_j$, $K_j$ and $X_j$ have an MCK decomposition, thanks to corollary \ref{mck}, resp. \cite{FTV}, resp. \cite{V6}). As the property of having an MCK decomposition is stable under products \cite[Theorem 8.6]{SV}, the statement for the variety $Y$ follows.) 
   
   There is an inclusion
    \[  p^\ast E^\ast(X)\ \subset\ A^\ast_{(0)}(Y)\ .\]
    (Indeed, we have seen in corollary \ref{cor} that $(p_j)^\ast A^2(D_j)\subset A^2_{(0)}(Y_j)$. Furthermore, it is known that
      \[   c_r(K_j)\ \ \in\ A^r_{(0)}(K_j)\ ,\ \ \  c_r(X_j)\ \ \in\ A^r_{(0)}(X_j)  \ \]
      \cite[Proposition 7.13]{FTV}, resp. \cite[Theorem 2]{V6}.
        Let $\pi$ denote projection from $Y$ to any of the factors $Y_j$ or $K_j$ or $X_j$. Then $\pi$ is ``of pure grade $0$'', in the sense of \cite[Definition 1.1]{SV2}, which means that $\pi^\ast$ preserves the bigrading \cite[Corollary 1.6]{SV2}. This proves the stated inclusion.)
    
    Since 
     \[A^i_{(0)}(Y)\ \to\  H^{2i}(Y)\] 
     is injective for $i\ge 2m-1$, and 
     \[ p^\ast\colon A^i(X)\ \to\ A^i(Y)\] is injective for all $i$, 
     this proves the corollary.   
    \end{proof}

\vskip1cm
\begin{nonumberingt} 
It is a pleasure to thank Len for numerous shared readings of "Het huis van Barbapapa".
\end{nonumberingt}

\vskip1cm

\end{document}